\newtheorem{theorem}{Theorem}
\theoremstyle{plain}
\newtheorem{acknowledgement}{Acknowledgement}
\newtheorem{corollary}{Corollary}
\newtheorem{definition}{Definition}
\newtheorem{lemma}{Lemma}
\newtheorem{notation}{Notation}
\newtheorem{proposition}{Proposition}
\newtheorem{remark}{Remark}
\numberwithin{equation}{section}
\begin{document}
\title[On uniqueness of solutions of Navier-Stokes equations]{On uniqueness
of weak solutions of the incompressible Navier-Stokes equations in
3-dimensional case}
\author{Kamal N. Soltanov}
\address{{\small Institute of Mathematics and Mechanics National Academy of
Sciences of Azerbaijan, AZERBAIJAN; }}
\email{sultan\_kamal@hotmail.com}
\urladdr{}
\subjclass[2010]{Primary 35K55, 35K61, 35D30, 35Q30; Secondary 76D03, 76N10}
\date{}
\keywords{3D-Navier-Stokes Equations, Uniqueness, Solvability}

\begin{abstract}
In this article we study the uniqueness of the weak solution of the
incompressible Navier-Stokes Equation in the 3-dimensional case with use of
different approach. Here the uniqueness of the obtained by Leray of the weak
solution is proved in the case, when datums from spaces that are densely
contained into spaces of datums for which was proved the existence of the
weak solution. Moreover we investigate the solvability and uniqueness of the
weak solutions of problems associated with investigation of the main problem.
\end{abstract}

\maketitle

\begin{center}
\bigskip {\Large Contents}
\end{center}

1. Introduction

2. Preliminary results

3. One conditional uniqueness theorem for problem (1.1$^{1}$) - (1.3)

4. Restriction of problem (1.1$^{1}$) - (1.3)

5. Existence of Solution of Problem (3.3) - (3.5)

\qquad 5.1. A priori estamations

\qquad 5.2. Boundedness of trilinear form

\qquad 5.3. Boundedness of $u^{\prime }$

\qquad 5.4. Weakly compactness of operator $B$

\qquad 5.5. Realisation of initial condition

6. Uniqueness of Solution of Problem (3.3) - (3.5)

7. Proof of Theorem 2

8. Conclusion

9. References

\section{\label{Sec_1}Introduction}

In this article we investigate Navier-Stokes equation in the incompressible
case, i.e. we consider the following system of equations: 
\begin{equation}
\frac{\partial u_{i}}{\partial t}-\nu \Delta u_{i}+\underset{j=1}{\overset{d}{\sum }}u_{j}\frac{\partial u_{i}}{\partial x_{j}}+\frac{\partial p}{\partial x_{i}}=f_{i},\quad i=\overline{1,d},
\label{1}
\end{equation}%
\begin{equation}
\func{div}u=\underset{i=1}{\overset{d}{\sum }}\frac{\partial u_{i}}{\partial x_{i}}=0,\quad x\in \Omega \subset R^{d},t>0\quad ,
\label{2}
\end{equation}%
\begin{equation}
u\left( 0,x\right) =u_{0}\left( x\right) ,\quad x\in \Omega ;\quad u\left\vert \ _{\left( 0,T\right) \times \partial \Omega }\right. =0
\label{3}
\end{equation}%
where $\Omega \subset R^{d}$ is a bounded domain with sufficiently smooth
boundary $\partial \Omega $, $T>0$ is a positive number. As it is well known
Navier-Stokes equation describe the motion of a fluid in $R^{d}$ ($d=2$ or $%
3 $). These equations are to be solved for an unknown velocity vector $%
u(x,t)=\left\{ u_{i}(x,t)\right\} _{1}^{d}\in R^{d}$ and pressure $p(x,t)\in
R$, defined for position $x\in R^{d}$ and time $t\geq 0$, $f_{i}(x,t)$ are
the components of a given, externally applied force (e.g. gravity), $\nu $
is a positive coefficient (the viscosity), $u_{0}\left( x\right) \in R^{d}$
is a sufficiently smooth vector function (vector field).

As is well-known of \cite{Ler1} is shown (see, also, \cite{Lad1}, \cite%
{MajBer}, \cite{Con1}, \cite{Fef1}, \cite{Lio1}) that the Navier--Stokes
equations (\ref{1}), (\ref{2}), (\ref{3}) in three dimensions have a weak
solution $(u,p)$ with suitable properties. But the uniqueness of weak
solutions of the Navier--Stokes equation is not known in three space
dimensions case. Uniqueness of weak solution in two space dimensions case
were proved (\cite{LioPro}, \cite{Lio1}, see also \cite{Lad2}), and under
complementary conditions on smoothnes of the solution three dimensions case
was also studied (see, for example, \cite{Lio1}, \cite{Tem1}, \cite{Fur},
etc.).\ For the Euler equation, uniqueness of weak solutions is strikingly
false (see, \cite{Sch1}, \cite{Shn1}).

It is needed to note that the regularity of solutions in three dimensions
case were investigated and partial regularity of the suitable weak solutions
of the Navier--Stokes equation were obtained (see, \cite{Sch2}, \cite%
{CafKohNir}, \cite{Lin1}, \cite{Lio1}, \cite{Lad1}). There exist many works
which study different properties of solutions of the Navier--Stokes equation
(see, for example, \cite{Lio1}, \cite{Lad1}, \cite{Lin1}, \cite{Fef1}, \cite%
{FoiManRosTem}, \cite{FoiRosTem1}, \cite{FoiRosTem2}, \cite{FoiRosTem3}, 
\cite{GlaSveVic}, \cite{HuaWan}, \cite{PerZat}, \cite{Sol1}, \cite{Sol2}, 
\cite{Tem1}), etc.) and also different modifications of Navier--Stokes
equation (see, for example, \cite{Lad1}, \cite{Lio1}, \cite{Sol3}, etc.). \ 

It should be noted that under various complementary conditions of the type
of certain smoothness of the weak solutions different results on the
uniqueness of solution of the incompressible Navier-Stokes equation in $3D$
case earlier were obtained (see, e. g. \cite{Lad2}, \cite{Lio1}, \cite{Tem1}%
, etc.). Here we would like to note the result of article \cite{Fur} that
possesses of some proximity to the result of this article. In this article
the system of equations (1.1%
${{}^1}$%
) - (1.3), which is obtained from the incompressible Navier--Stokes system
by using of the Hopf-Leray approach is examined (that below will be
explained, e.g. as in \cite{Tem1}) in the following form 
\begin{equation*}
Nu=\overset{\bullet }{u}+\nu Au+B(u)=f,\quad \gamma _{0}u=u_{0},
\end{equation*}%
where $B(u)\equiv \underset{j=1}{\overset{3}{\sum }}u_{j}\frac{\partial u_{i}%
}{\partial x_{j}}$ and $\gamma _{0}u\equiv u\left( 0\right) $. In which the
author shows that $\left( N,\gamma _{0}\right) :Z\longrightarrow L^{2}\left(
0,T:H^{-1/2}\left( 
\Omega
\right) \right) \times H^{1/2}\left( 
\Omega
\right) $\ is the continuous operator under the condition that $%
\Omega
\subset R^{3}$ is a bounded region whose boundary $\partial 
\Omega
$ is a closed manifold of class $C^{\infty }$, where 
\begin{equation*}
Z=\left\{ \left. u\in L^{2}\left( 0,T:H^{3/2}\left( 
\Omega
\right) \right) \right\vert \ \overset{\bullet }{u}\in L^{2}\left(
0,T:H^{-1/2}\left( 
\Omega
\right) \right) \right\} .
\end{equation*}%
Moreover, here the following result is proved: if to denote by $F_{\gamma
_{0}}$ the image: $N\left( Z_{u_{0}}\right) =F_{\gamma _{0}}$ for $u_{0}\in
H^{1/2}\left( 
\Omega
\right) $ then for each $f\in F_{\gamma _{0}}$ there exists only one
solution $u\in Z$ such that $Nu=f$ and $\gamma _{0}u=u_{0}$, here $%
Z_{u_{0}}=\left\{ \left. u\in Z\right\vert \ \gamma _{0}u=u_{0}\right\} $.
In this article also the density in $L^{2}\left( 0,T:H^{-1/2}\left( 
\Omega
\right) \right) $ of the defined above set $F_{\gamma _{0}}$ in the topology
of $L^{p}\left( 0,T:H^{-l}\left( 
\Omega
\right) \right) $ is shown under certain conditions on $p,l$. Here other
interesting results for the operator $N$ relatively of the dependence of the
image of $N$ from the selected domain of definition $N$ are obtained. The
proof given in \cite{Fur} is similar to the proof of \cite{Lio1} and \cite%
{Tem1}, but the result not follows from their results.

In this article we begin with the explanation why for the study of the posed
question one must investigate the problem (1.1%
${{}^1}$%
) - (1.3). For this we use the approach Hopf-Leray (with taking into account
of the result of de Rham) for study the existence of the weak solution of
the considered problem as usually all of the above mentioned authors.

Unlike above results here we study the question on the uniqueness in the
case when the weak solution $u$ of the problem (1.1%
${{}^1}$%
) - (1.3) is contained of $\mathcal{V}\left( Q^{T}\right) $, and, as is
well-known, the following condition is sufficiently for this: the functions $%
u_{0}$ and $f$ satisfy conditions 
\begin{equation*}
u_{0}\in H\left( \Omega \right) ,\quad f\in L^{2}\left( 0,T;V^{\ast }\left(
\Omega \right) \right) .
\end{equation*}

\begin{notation}
The result obtained for the problem (1.1%
${{}^1}$%
) - (1.3) allows us to respond to the posed question, namely to prove the
uniqueness of the vector velocity $u$.
\end{notation}

So, in this article an investigation of the question on uniqueness of the
weak solutions $u$ in the sense of Hopf-Leray of the mixed problem with
Dirichlet boundary condition for the incompressible Navier-Stokes system in
the $3D$ case is given. For investigation we use an approach that is
different from usual methods which are used for investigation of the
question on the uniqueness of solution. The approach used here allows us to
receive more general result on the posed question. More precisely with use
of this approach more general uniqueness theorem of the weak solutions (of
the vector velocity $u$) of the problem obtained from mixed problem for the
incompressible Navier--Stokes equation, by using of the Hopf-Leray approach
is proved. Moreover in order to carry out the proof of the main result in
the beginning we study the auxiliary problems, more exactly we prove the
existence and uniqueness of the weak solutions of auxiliary problems.

For study of the uniqueness of solution of the problem we use the
variational formulation of the problem according to J. Leray \cite{Ler1} and
E. Hopf \cite{Hop} as above mentioned authors. As is well-known, on the
existence of solution of problem (1.1%
${{}^1}$%
) - (1.3) exist many results (see, \cite{Lio1}, \cite{Tem1} and \cite{Gal}).
We will formulate here one of these general results from the book of \cite%
{Tem1}

\begin{theorem}
(\cite{Tem1}) Let $\Omega $ be a Lipschitz open bounded set in $R^{d}$, $%
d\leq 4$. Let there be given $f$ and $u_{0}$ which

satisfy $f\in L^{2}\left( 0,T;V^{\ast }\left( \Omega \right) \right) $ and $%
u_{0}\in H\left( \Omega \right) $.\ Then there exists at least one function $%
u$ which

satisfies $u\in L^{2}\left( 0,T;V\left( \Omega \right) \right) $, $\frac{du}{%
dt}\in L^{1}\left( 0,T;V^{\ast }\left( \Omega \right) \right) $, $u\left(
0\right) =u_{0}$ and the equation 
\begin{equation}
\frac{d}{dt}\left\langle u,v\right\rangle -\left\langle \nu \Delta
u,v\right\rangle +\left\langle \underset{j=1}{\overset{d}{\sum }}u_{j}\frac{%
\partial u}{\partial x_{j}},v\right\rangle =\left\langle f,v\right\rangle
\label{1'}
\end{equation}%
for any $v\in V\left( \Omega \right) $. Moreover, $u\in L^{\infty }\left(
0,T;H\left( \Omega \right) \right) $ and $u\left( t\right) $\ is weakly
continuous from $\left[ 0,T\right] $ into $H\left( \Omega \right) $ (i. e. $%
\forall v\in H\left( \Omega \right) $, $t\longrightarrow \left\langle
u\left( t\right) ,v\right\rangle $ is a continuous scalar function, and
consequently, $\left\langle u\left( 0\right) ,v\right\rangle =\left\langle
u_{0},v\right\rangle $).
\end{theorem}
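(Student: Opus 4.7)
The plan is to use the standard Faedo--Galerkin scheme tailored to the divergence-free setting, essentially the construction recorded in Temam's book. First I would fix a Galerkin basis $\{w_k\}_{k\geq 1}\subset V(\Omega)$ that is orthonormal in $H(\Omega)$ and orthogonal in $V(\Omega)$; the eigenfunctions of the Stokes operator are the natural choice. For each $m$, I would seek an approximate solution
\begin{equation*}
u_m(t)=\sum_{k=1}^{m}g_{k,m}(t)\,w_k
\end{equation*}
required to satisfy the projection of equation (1.4) onto $V_m=\mathrm{span}\{w_1,\ldots,w_m\}$, together with the initial data $u_m(0)=P_m u_0$, where $P_m$ is the $H(\Omega)$-orthogonal projection. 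This reduces the problem to a system of quadratic ODE's for the coefficients $g_{k,m}$, which has a local Carathéodory solution; the a priori bounds produced below will immediately extend it to all of $[0,T]$.

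Next I would derive the energy estimate by testing the projected equation against $u_m$ itself. Using the fundamental cancellation of the trilinear form on $V(\Omega)$,
\begin{equation*}
\Bigl\langle \sum_{j=1}^{d}u_{m,j}\,\partial_{x_j}u_m,\,u_m\Bigr\rangle=0,
\end{equation*}
the nonlinearity disappears and I obtain
\begin{equation*}
\tfrac{1}{2}\tfrac{d}{dt}\|u_m(t)\|_{H}^{2}+\nu\|u_m(t)\|_{V}^{2}=\langle f(t),u_m(t)\rangle.
\end{equation*}
Gronwall together with Cauchy--Schwarz and Young's inequality then yields a uniform bound in $L^{\infty}(0,T;H(\Omega))\cap L^{2}(0,T;V(\Omega))$. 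By weak and weak-* compactness I extract a subsequence (still denoted $u_m$) converging to some $u$ in the corresponding weak topologies.

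The main obstacle is passing to the limit in the nonlinear convective term, for which weak convergence is not enough. The remedy is the Aubin--Lions--Simon compactness lemma, and for this I must control $\dot u_m$ in a negative-order norm. Here the dimension restriction $d\leq 4$ enters: using Hölder together with the Sobolev embedding $V(\Omega)\hookrightarrow L^{4}(\Omega)^{d}$ (valid for $d\leq 4$), one estimates $\|B(u_m)\|_{V^{\ast}}$ by $\|u_m\|_{L^4}^{2}\lesssim\|u_m\|_{H}\|u_m\|_{V}$ in three dimensions, giving $\dot u_m$ bounded in $L^{4/3}(0,T;V^{\ast}(\Omega))$; for $d=4$ a similar but weaker bound in $L^{1}(0,T;V^{\ast}(\Omega))$ suffices. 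Combined with the compact embedding $V(\Omega)\hookrightarrow\hookrightarrow H(\Omega)$, Aubin--Lions delivers strong convergence $u_m\to u$ in $L^{2}(0,T;H(\Omega))$, which is exactly what is needed to pass to the limit in the quadratic term against a test function $v\in V_{m_0}$ for fixed $m_0\leq m$.

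Finally I would finish by verifying the remaining conclusions. The weak limit $u$ satisfies (1.4) first for every $v\in\bigcup_{m_0}V_{m_0}$ and then, by density, for every $v\in V(\Omega)$; the bound on $\dot u$ in $L^{1}(0,T;V^{\ast}(\Omega))$ is inherited from the Galerkin bounds by lower semicontinuity. The regularity $u\in L^{\infty}(0,T;H(\Omega))$ is already in hand, and weak continuity $t\mapsto\langle u(t),v\rangle$ for $v\in H(\Omega)$ follows from combining this $L^{\infty}$ bound with the absolute continuity of $t\mapsto\langle u(t),v\rangle$ for $v\in V(\Omega)$ given by $\dot u\in L^{1}(0,T;V^{\ast}(\Omega))$, together with a density argument. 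Weak continuity at $t=0$ together with $u_m(0)=P_m u_0\to u_0$ in $H(\Omega)$ identifies $u(0)=u_0$, completing the proof.
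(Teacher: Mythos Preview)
Your proposal is correct and follows the standard Faedo--Galerkin argument. Note, however, that the paper does not supply its own proof of this theorem: it is quoted verbatim as a background result from Temam's monograph \cite{Tem1}, and the paper simply invokes it. The approach you outline is precisely the one in \cite{Tem1}, so there is nothing to compare against in the paper itself. (For what it is worth, the paper does carry out an analogous Faedo--Galerkin construction in Section~\ref{Sec_5} for the two-dimensional restricted problem (\ref{3.3})--(\ref{3.5}), and the steps there---a priori estimates, boundedness of the trilinear form, bound on $u'$, weak compactness of $B$, and verification of the initial condition---mirror your outline almost exactly.)
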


"Moreover, in the case when $d=3$ a weak solution $u$ satisfy 
\begin{equation*}
u\in V\left( Q^{T}\right) ,\quad u^{\prime }\equiv \frac{\partial u}{%
\partial t}\in L^{\frac{4}{3}}\left( 0,T;V^{\ast }(%
\Omega
)\right) ,\quad
\end{equation*}%
and also is almost everywhere equal to some continuous function from $\left[
0,T\right] $ into $H$, so that (\ref{3}) is meaningful. with use of the
obtained properties that any weak solution belong to the bounded subset of 
\begin{equation*}
\mathcal{V}\left( Q^{T}\right) \equiv V\left( Q^{T}\right) \cap
W^{1,4/3}\left( 0,T;V^{\ast }(%
\Omega
)\right)
\end{equation*}%
and satisfies the equation (\ref{1'})."

In what follows we will base on the mentioned existence theorem of the
solution of problem (1.1%
${{}^1}$%
) - (1.3) and the added notation as principal result, since we as well
investigate of the weak solution of the problem that is mentioned in Theorem
1, but by other way.

Then we can formulate the main result of this article in the following form.

\begin{theorem}
\label{Th_1}Let $\Omega \subset R^{3}$ be a domain of $Lip_{loc}$ (will be
defined below; see, Section \ref{Sec_4}), $T>0$ be a number. If given
functions $u_{0}$, $f$ satisfy of conditions $u_{0}\in H^{1/2}\left( \Omega
\right) $, $f\in L^{2}\left( 0,T;H^{1/2}\left( \Omega \right) \right) $ then
the weak solution $u\in \mathcal{V}\left( Q^{T}\right) $ of the problem (1.1%
${{}^1}$%
) - (1.3) given by the above mentioned theorem is unique.
\end{theorem}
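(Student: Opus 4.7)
My plan is to exploit three facts in sequence: the extra regularity of the data, a conditional uniqueness principle, and an auxiliary problem with better solution spaces. The starting observation is that in dimension three the Sobolev embedding $H^{1/2}(\Omega)\hookrightarrow L^{3}(\Omega)$ is continuous, so $u_{0}\in L^{3}(\Omega)$ and $f\in L^{2}(0,T;L^{3}(\Omega))$. This is the critical scaling for three-dimensional Navier--Stokes, which is exactly what is needed to control the convective term $B(u)$ in an energy estimate for a difference of two weak solutions without invoking a Gronwall argument with an unbounded exponent.

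First I would formulate the ``restricted'' problem (3.3)--(3.5) as a problem in a subspace of $\mathcal{V}(Q^{T})$ encoding the $H^{1/2}$-regularity in $x$ and a compatible Bochner-time regularity; the restricted space is chosen precisely so that the trilinear form $b(u,v,w)=\langle \sum_{j}u_{j}\partial_{j}v,w\rangle$ is bounded (Subsection~5.2) and the time derivative $u'$ acquires the integrability needed for the energy identity (Subsection~5.3). On this restricted problem I would prove existence by a Galerkin scheme: derive $a~priori$ bounds (5.1), pass to the limit using weak compactness of the nonlinear operator $B$ (5.4), and recover the initial condition (5.5). Uniqueness for the restricted problem (Section~6) then follows from a direct energy argument applied to $w=u-\tilde u$, where the bound on the critical term $\langle B(w,\tilde u),w\rangle$ is now absorbed by $\nu\|A^{1/2}w\|^{2}$ using the $L^{3}$ bound coming from the restricted class.

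Second, I would invoke the conditional uniqueness theorem of Section~3 as the bridge between the restricted problem and the original Navier--Stokes problem. That theorem asserts that any weak solution of (1.1$^{1}$)--(1.3) in $\mathcal{V}(Q^{T})$ which additionally lies in the restricted class coincides with the unique solution of the restricted problem. Consequently, every Leray--Hopf solution provided by Theorem~1 with data satisfying $u_{0}\in H^{1/2}(\Omega)$ and $f\in L^{2}(0,T;H^{1/2}(\Omega))$ is forced to equal the single solution $\tilde u$ produced in Section~6, and uniqueness in Theorem~2 follows.

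The main obstacle I anticipate is establishing existence for the restricted problem under merely $u_{0}\in H^{1/2}$ and $f\in L^{2}(0,T;H^{1/2})$: propagating the $H^{1/2}$-smoothness along Navier--Stokes trajectories is delicate because the scaling is critical, and one does not have room to absorb losses from the convective term into the viscous term without tightly aligned norms. The restricted functional setting must be chosen so that the embedding $H^{1/2}\hookrightarrow L^{3}$ is built into the coercivity of the linear part and into the continuity of $B$ simultaneously; this is the technical heart of Sections~4 and~5, and any subtle gap in the overall argument is most likely to live there. Once that step is secured, Section~3 and Section~6 combine routinely to yield Theorem~2.
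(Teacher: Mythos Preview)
Your proposal misreads the architecture of the paper. The ``restricted problem'' (3.3)--(3.5) is not a problem posed in a smoother functional subspace of $\mathcal{V}(Q^{T})$; it is the \emph{geometric} restriction of the 3D equations to a two-dimensional hyperplane slice $\Omega_{L}=\Omega\cap L$. The role of the hypothesis $u_{0}\in H^{1/2}(\Omega)$, $f\in L^{2}(0,T;H^{1/2}(\Omega))$ in the paper is not the embedding $H^{1/2}\hookrightarrow L^{3}$ but the trace theorem: it guarantees that $u_{0}$ and $f$ have well-defined restrictions to each $\Omega_{L}$, so that the sliced problem makes sense. Sections~5 and~6 then run the \emph{two-dimensional} existence and uniqueness argument on each slice (in particular the 2D Ladyzhenskaya inequality $\|w\|_{4}^{2}\le c\|w\|_{2}\|\nabla w\|_{2}$ is what closes the Gronwall estimate in Section~6). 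Lemma~\ref{L_2.2} is the genuine bridge: it says two 3D solutions differ on a set of positive 4-measure only if they differ on a positive-measure family of hyperplane slices, so uniqueness on every slice forces uniqueness in $Q^{T}$. Section~3 is a stand-alone conditional result (sign of a quadratic form, or smallness $\nu\lambda_{1}^{1/4}\ge c^{2}\sum\|D_{i}u_{j}\|_{2}$) and is not invoked in the proof of Theorem~\ref{Th_1}.

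Your alternative route---propagate $H^{1/2}$ (equivalently $L^{3}$) regularity and then appeal to a weak--strong uniqueness principle---is a coherent strategy in spirit, but it has a real gap exactly where you locate it: at the critical scaling one cannot simply absorb $\langle B(w,\tilde u),w\rangle$ into $\nu\|A^{1/2}w\|^{2}$ by an $L^{3}$ bound without first establishing that the solution with $H^{1/2}$ data actually stays in an endpoint Prodi--Serrin class such as $C([0,T];L^{3})$ or $L^{2}(0,T;H^{3/2})$. That is a deep result in its own right (cf.\ the $L^{3}$ theory and the work of Escauriaza--Seregin--\v{S}ver\'ak), not something a Galerkin scheme with the estimates you list will deliver. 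So as written, your plan would stall at Section~5, and Section~3 cannot rescue it because its hypotheses are unrelated to the $H^{1/2}$ assumption.
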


This article is organized as follow. In Section 2 we adduce some known
results and the explanation of the relation between problems (1.1) - (1.3)
and (1.1%
${{}^1}$%
) - (1.3). This section contains some necessary technical lemmas appropriate
for the study of problem (1.1%
${{}^1}$%
) - (1.3). In Section 3 we prove one result on uniqueness of solution of
problem (1.1%
${{}^1}$%
) - (1.3) on some restriction on solution by use of the several modification
of the well-known approach. In Section 4 we by use of the new approach
transform problem (1.1%
${{}^1}$%
) - (1.3) to auxiliary problems. In Section 5 we investigate the existence
of the solution and, in Section 6 the uniqueness of solution of the
auxiliary problem. In Section 7 we prove of the main result, i.e. Theorem %
\ref{Th_1}.

\section{\label{Sec_2}Preliminary results}

In this section, we briefly recall the background material, definitions of
the appropriate spaces of Sobolev space type, deduce the necessary auxiliary
results and introduce some of the notation that is needed for the results
presented later in sections. Moreover, we recall the basic setup and results
regarding of the weak solutions of the Navier--Stokes equations used
throughout this paper. As is well known (see, e. g. \cite{Lio1}, \cite{Tem1}
and references therein) problem (\ref{1}) - (\ref{3}) possesses weak
solution in the space $\mathcal{V}\left( Q^{T}\right) \times L^{2}\left(
Q^{T}\right) $, $\mathcal{V}\left( Q^{T}\right) $ will be defined later on,
for any $u_{0i}\left( x\right) ,$ $f_{i}(x,t)$ ($i=\overline{1,3}$) which
are contained in the suitable spaces (in the case $d=3$, that we will
investigate here, essentially).

\begin{definition}
\label{D_2.1}Let $\Omega \subset R^{d}$ be a bounded Lipschitz open set and $%
Q^{T}\equiv \left( 0,T\right) \times \Omega $, $T>0$ be a number. Let $%
V\left( Q^{T}\right) $ be the space determined as 
\begin{equation*}
V\left( Q^{T}\right) \equiv L^{2}\left( 0,T;V\left( \Omega \right) \right) \cap L^{\infty }\left( 0,T;\left( H\left( \Omega \right) \right) ^{d}\right) ,%
\end{equation*}%
where $V\left( \Omega \right) $ is the closure in $\left( H_{0}^{1}\left(
\Omega \right) \right) ^{d}$\ of 
\begin{equation*}
\left\{ \varphi \left\vert \ \varphi \in \left( C_{0}^{\infty }\left( \Omega
\right) \right) ^{d},\right. \func{div}\varphi =0\right\}
\end{equation*}%
\ the dual $V\left( \Omega \right) $ determined as $V^{\ast }\left( \Omega
\right) $ and $\left( H\left( \Omega \right) \right) ^{d}$ is the closure in 
$\left( L^{2}\left( \Omega \right) \right) ^{d}$ of 
\begin{equation*}
\left\{ \varphi \left\vert \ \varphi \in \left( C_{0}^{\infty }\left( \Omega
\right) \right) ^{d},\right. \func{div}\varphi =0\right\} .
\end{equation*}%
Moreover we set also the space $\mathcal{V}\left( Q^{T}\right) \equiv
V\left( Q^{T}\right) \cap W^{1,4/3}\left( 0,T;V^{\ast }\left( \Omega \right)
\right) $.
\end{definition}

Here as is well-known $L^{2}\left( \Omega \right) $ is the Lebesgue space
and $H^{1}\left( \Omega \right) $ is the Sobolev space, that are the Hilbert
spaces and 
\begin{equation*}
H_{0}^{1}\left( \Omega \right) \equiv \left\{ v\left\vert \ v\in H^{1}\left(
\Omega \right) ,\right. v\left\vert \ _{\partial \Omega }\right. =0\right\} .
\end{equation*}%
In this case as is well-known (see, e.g. \cite{Tem1}) $H\left( \Omega
\right) $ and $V\left( \Omega \right) $ are the Hilbert spaces, also.

We assume that given functions $u_{0}$ and $f$ satisfy 
\begin{equation*}
u_{0}\in \left( H\left( \Omega \right) \right) ^{d},\quad f\in L^{2}\left(
0,T;V^{\ast }\left( \Omega \right) \right)
\end{equation*}%
where $V^{\ast }\left( \Omega \right) $ is the dual space of $V\left( \Omega
\right) $.

In order to adduce of the definition of the weak solution of the problem (1.1%
${{}^1}$%
) - (\ref{3}) we would like to note that we will investigate of the weak
solutions of problem (\ref{1}) - (\ref{3}) in the sense of J. Leray \cite%
{Ler1} by use of his approach (see, also \cite{Lio1}, \cite{Tem1}). This
approach shows that for study of the posed problem it is sufficient to
investigate of same question for the following problem by virtue of de Rham
result (see, books \cite{Lio1}, \cite{Tem1}, etc. where sufficiently clearly
explained this property of the posed problem):%
\begin{equation}
\frac{\partial u_{i}}{\partial t}-\nu \Delta u_{i}+\underset{j=1}{\overset{d}%
{\sum }}u_{j}\frac{\partial u_{i}}{\partial x_{j}}=f\left( t,x\right)
_{i},\quad i=\overline{1,d},\ \nu >0  \tag{1.1$^{1}$}
\end{equation}%
\begin{equation}
\func{div}u=\underset{i=1}{\overset{d}{\sum }}\frac{\partial u_{i}}{\partial
x_{i}}=\underset{i=1}{\overset{d}{\sum }}D_{i}u_{i}=0,\quad x\in \Omega
\subset R^{d},\ t>0,  \tag{1.2}
\end{equation}%
\begin{equation}
u\left( 0,x\right) =u_{0}\left( x\right) ,\quad x\in \Omega ;\quad
u\left\vert \ _{\left( 0,T\right) \times \partial \Omega }\right. =0. 
\tag{1.3}
\end{equation}

In order to explain that the investigation of the posed question for problem
(1.1%
${{}^1}$%
) - (\ref{3}) is sufficient for our goal we represent here some results of
the book \cite{Tem1} which have the immediate relation to this problem.

\begin{proposition}
\label{Pr_2.1}(\cite{Tem1}) Let $\Omega $ be a bounded Lipschitz open set in 
$R^{d}$ and $f=\left( f_{1},...,f_{n}\right) $, $f_{i}\in \mathcal{D}%
^{\prime }\left( \Omega \right) $, $1\leq i\leq d$. A necessary and
sufficient condition that $f=\func{grad}p$ for some $p$ in $\mathcal{D}%
^{\prime }\left( \Omega \right) $, is that $\left\langle f,v\right\rangle =0$
$\forall v\in V\left( \Omega \right) $.
\end{proposition}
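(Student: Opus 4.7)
The statement is the classical de Rham characterization of gradients by vanishing on divergence-free test fields, so my plan would follow the standard two-direction argument, with the nontrivial content living in the ``only if'' direction.

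First I would dispose of the easy implication. If $f=\operatorname{grad} p$ for some $p\in\mathcal{D}'(\Omega)$, then for any $\varphi\in(C_0^\infty(\Omega))^d$ with $\operatorname{div}\varphi=0$ one has, by the definition of distributional derivative,
\begin{equation*}
\langle f,\varphi\rangle=\langle\operatorname{grad} p,\varphi\rangle=-\langle p,\operatorname{div}\varphi\rangle=0.
\end{equation*}
Since $(C_0^\infty(\Omega))^d\cap\{\operatorname{div}\varphi=0\}$ is dense in $V(\Omega)$ by Definition \ref{D_2.1}, and since $f$ acts continuously on $V(\Omega)\subset(H_0^1(\Omega))^d$, this identity passes to all $v\in V(\Omega)$.

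The converse is the substantive step. Assume $\langle f,v\rangle=0$ for every $v\in V(\Omega)$. My plan is first to show that the distributional ``curl'' of $f$ vanishes, and then to invoke a Poincaré-type lemma for distributions to produce the potential $p$. Concretely, for each pair of indices $i<j$ and each test function $\psi\in C_0^\infty(\Omega)$, I would form the vector field $w^{ij}_\psi$ whose $i$-th component is $-\partial_j\psi$, whose $j$-th component is $\partial_i\psi$, and whose remaining components vanish. Then $w^{ij}_\psi$ lies in $(C_0^\infty(\Omega))^d$ and is manifestly divergence-free, hence in $V(\Omega)$, so that $\langle f,w^{ij}_\psi\rangle=0$. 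Unpacking this gives
\begin{equation*}
\langle \partial_j f_i-\partial_i f_j,\psi\rangle=0\quad\text{for all }\psi\in C_0^\infty(\Omega),
\end{equation*}
i.e.\ $\partial_j f_i=\partial_i f_j$ in $\mathcal{D}'(\Omega)$ for all $i,j$.

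With the mixed-derivative identities in hand, I would apply the distributional Poincaré lemma: on a ball (or on any simply connected open set) a distributional 1-form $(f_1,\dots,f_d)$ satisfying $\partial_i f_j=\partial_j f_i$ is the gradient of a distribution. This can be done directly by integrating along straight segments from a basepoint against mollified versions of $f$ and passing to the limit, or by quoting de Rham's theorem. For a general bounded Lipschitz $\Omega$ that need not be simply connected, I would cover $\Omega$ by finitely many open balls $B_\alpha$ with connected (in fact star-shaped) overlaps, obtain local potentials $p_\alpha\in\mathcal{D}'(B_\alpha)$ with $f=\operatorname{grad} p_\alpha$ on $B_\alpha$, and then glue: on each nonempty intersection $B_\alpha\cap B_\beta$ the difference $p_\alpha-p_\beta$ has vanishing gradient and is therefore locally constant, which by a connectedness argument on the nerve of the cover can be adjusted (using that $\Omega$ is bounded and connected) to obtain a single $p\in\mathcal{D}'(\Omega)$ with $\operatorname{grad} p=f$.

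The main obstacle I expect is the gluing step for general Lipschitz $\Omega$; the curl-free deduction and the local Poincaré lemma are routine, but passing from local potentials to a global one is where Lipschitz regularity of $\partial\Omega$ genuinely enters (through the finiteness and good overlap properties of the cover). If $\Omega$ were additionally required to be simply connected, this last technicality would be absent and the proof would terminate after the local Poincaré lemma.
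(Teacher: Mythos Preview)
The paper does not give its own proof of this proposition; it is quoted from Temam \cite{Tem1} (ultimately de Rham) and used as a black box. So there is no in-paper argument to compare against. That said, your proposal has a real gap that is worth flagging.

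Your converse goes: vanishing of $f$ on divergence-free test fields $\Rightarrow$ $\partial_i f_j=\partial_j f_i$ $\Rightarrow$ local potentials on balls $\Rightarrow$ glue to a global $p$. The first two steps are fine, but the gluing step as you describe it is incorrect. On overlaps $B_\alpha\cap B_\beta$ you get constants $c_{\alpha\beta}=p_\alpha-p_\beta$, and these form a \v{C}ech 1-cocycle; adjusting the $p_\alpha$ by constants to make them agree requires this cocycle to be a coboundary, i.e.\ $H^1(\Omega;\mathbb{R})=0$. Mere connectedness of $\Omega$ (or of the nerve) does not buy that: a bounded Lipschitz annulus in $\mathbb{R}^2$ or a solid torus in $\mathbb{R}^3$ are counterexamples, where a curl-free $f$ need not be a global gradient. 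In short, you have thrown away information by only extracting the curl condition from the hypothesis; the condition $\langle f,v\rangle=0$ for \emph{all} divergence-free $v$ is strictly stronger than $\operatorname{curl} f=0$, and it is precisely the extra divergence-free test fields (the ones not of the form $w^{ij}_\psi$, which detect circulation around nontrivial loops) that kill the cohomological obstruction. Your argument as written does not use them.

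A smaller issue: in the easy direction you invoke continuity of $f$ on $V(\Omega)\subset (H_0^1)^d$, but $f$ is only assumed to lie in $\mathcal{D}'(\Omega)$, so the pairing $\langle f,v\rangle$ for general $v\in V(\Omega)$ need not make sense. The statement should be read (and is, in Temam) with $v$ ranging over smooth compactly supported divergence-free fields; no density/continuity step is needed, and none is available at this generality. The standard route for the hard direction, which avoids your topological difficulty entirely, is functional-analytic: reduce locally to $f\in H^{-1}$ and use the closed range of $\operatorname{grad}:L^2(\Omega)/\mathbb{R}\to H^{-1}(\Omega)^d$ (this is exactly Proposition~\ref{Pr_2.2}), so that $f\perp V$ forces $f\in\operatorname{range}(\operatorname{grad})$.
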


\begin{proposition}
\label{Pr_2.2}(\cite{Tem1}) Let $\Omega $ be a bounded Lipschitz open set in 
$R^{d}$.

(i) If a distribution $p$ has all its first-order derivatives $D_{i}p$, $%
1\leq i\leq d$ in $L^{2}\left( \Omega \right) $, then $p\in L^{2}\left(
\Omega \right) $ and\ 
\begin{equation*}
\left\Vert p\right\Vert _{L^{2}\left( \Omega \right) /R}\leq c\left( \Omega
\right) \left\Vert \func{grad}p\right\Vert _{L^{2}\left( \Omega \right) };
\end{equation*}

(ii) If a distribution $p$ has all its first derivatives $D_{i}p$, $1\leq
i\leq d$ in $H^{-1}\left( \Omega \right) $, then $p\in L^{2}\left( \Omega
\right) $ and 
\begin{equation*}
\left\Vert p\right\Vert _{L^{2}\left( \Omega \right) /R}\leq c\left( \Omega
\right) \left\Vert \func{grad}p\right\Vert _{H^{-1}\left( \Omega \right) }.
\end{equation*}

In both cases, if $\Omega $ is any open set in $R^{d}$, then $p\in
L_{loc}^{2}\left( \Omega \right) $.
\end{proposition}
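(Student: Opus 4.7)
First, observe that (i) is an immediate consequence of (ii), because $L^{2}(\Omega)$ embeds continuously into $H^{-1}(\Omega)$; so I would concentrate on (ii). The strategy is to recover $p$ modulo constants as the Riesz representative, in the mean-zero subspace $L_{0}^{2}(\Omega) := \{q \in L^{2}(\Omega) : \int_{\Omega} q\, dx = 0\}$, of a bounded linear functional built from $\nabla p$ through the duality between gradient and divergence.

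The technical heart of the argument, and the step I expect to be the main obstacle, is the de Rham--Bogovskii surjectivity of the divergence on Lipschitz domains: the operator $\func{div}:(H_{0}^{1}(\Omega))^{d} \to L_{0}^{2}(\Omega)$ is continuous and onto, and admits a bounded linear right inverse. Explicitly, for every $q \in L_{0}^{2}(\Omega)$ there exists $v_{q} \in (H_{0}^{1}(\Omega))^{d}$ with $\func{div} v_{q} = q$ and $\|v_{q}\|_{(H_{0}^{1})^{d}} \le C(\Omega)\|q\|_{L^{2}}$. The constant $C(\Omega)$ is precisely the one that will appear in the conclusion, and its construction is the non-trivial ingredient that exploits the Lipschitz regularity of $\partial\Omega$.

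Granted this, I define $\Lambda : L_{0}^{2}(\Omega) \to R$ by $\Lambda(q) := -\langle \nabla p, v_{q}\rangle$, the pairing being that between $(H^{-1}(\Omega))^{d}$ and $(H_{0}^{1}(\Omega))^{d}$. The definition is independent of the choice of lift: if $v_{1}$ and $v_{2}$ are two lifts of $q$, their difference $w = v_{1}-v_{2}$ lies in the kernel of $\func{div}$ in $(H_{0}^{1}(\Omega))^{d}$, which for Lipschitz $\Omega$ coincides with $V(\Omega)$, and by Proposition \ref{Pr_2.1} (one direction of de Rham) $\nabla p$ annihilates $V(\Omega)$. The Bogovskii bound then gives $|\Lambda(q)| \le C(\Omega)\|\nabla p\|_{H^{-1}}\|q\|_{L^{2}}$, so Riesz representation in $L_{0}^{2}(\Omega)$ produces a unique $\tilde p \in L_{0}^{2}(\Omega)$ with $\Lambda(q) = \int_{\Omega}\tilde p\, q\, dx$ and $\|\tilde p\|_{L^{2}} \le C(\Omega)\|\nabla p\|_{H^{-1}}$.

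To identify $p$ with $\tilde p$ up to a constant, I would test both against $q \in C_{0}^{\infty}(\Omega)$ with $\int q = 0$: since $\func{div} v_{q} = q$, one has $\langle p, q\rangle = \langle p, \func{div} v_{q}\rangle = -\langle \nabla p, v_{q}\rangle = \Lambda(q) = \int_{\Omega}\tilde p\, q\, dx$. Hence $p - \tilde p$ annihilates every mean-zero test function and is therefore a constant distribution, so $p \in L^{2}(\Omega)$ with the asserted quotient-norm inequality in both (i) and (ii) (the constants differ only by the $L^{2}\hookrightarrow H^{-1}$ embedding constant). For the last assertion, on an arbitrary open $\Omega$, I would exhaust $\Omega$ by relatively compact subdomains $\Omega' \Subset \Omega$ with smooth (hence Lipschitz) boundary and apply (ii) to $p$ restricted to $\Omega'$, obtaining $p \in L_{loc}^{2}(\Omega)$.
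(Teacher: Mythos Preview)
The paper does not supply its own proof of this proposition: it is quoted verbatim from Temam \cite{Tem1} as background material, with no argument given. So there is nothing in the paper to compare your attempt against.

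Your route via the Bogovski\u{\i} right inverse of the divergence is one of the standard proofs of this result, and the overall architecture is correct. Two points deserve care, however. First, when you argue that $\Lambda$ is independent of the choice of lift because the kernel of $\func{div}$ in $(H_{0}^{1}(\Omega))^{d}$ coincides with $V(\Omega)$, you are invoking a density statement whose usual proof in Temam's book relies precisely on the circle of ideas around this very proposition; that step is therefore circular as written. The clean fix is already implicit in your own wording: fix once and for all a \emph{linear} bounded right inverse $B:L_{0}^{2}(\Omega)\to (H_{0}^{1}(\Omega))^{d}$ and set $\Lambda(q):=-\langle\nabla p,\,Bq\rangle$ directly, so no well-definedness discussion is needed. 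Second, in the identification step you write $\langle p,q\rangle=\langle p,\func{div} v_{q}\rangle=-\langle\nabla p,v_{q}\rangle$; the middle equality is the definition of the distributional gradient only when $v_{q}\in (C_{0}^{\infty}(\Omega))^{d}$, not merely $v_{q}\in (H_{0}^{1}(\Omega))^{d}$, since at this stage $p$ is only a distribution. This is harmless because the explicit Bogovski\u{\i} construction on star-shaped pieces maps mean-zero $C_{0}^{\infty}$ functions to $(C_{0}^{\infty})^{d}$, so you may restrict to such $q$ and the pairing is legitimate. With these two clarifications your argument goes through.
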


Combining these results, one can note that if $f\in H^{-1}\left( \Omega
\right) $ (or $f\in L^{2}\left( \Omega \right) $) and $(f,v)=0$, then $f=%
\func{grad}p$ with $p\in L^{2}\left( \Omega \right) $ (or $p\in H^{1}\left(
\Omega \right) $) if $\Omega $ is a Lipschitz open bounded set.

\begin{theorem}
\label{Th_1.2}(\cite{Tem1}) Let $\Omega $ be a Lipschitz open bounded set in 
$R^{d}$. Then 
\begin{equation*}
H^{\bot }=\left\{ u\in L^{2}\left( \Omega \right) :u=\func{grad}p,\ p\in
H^{1}\left( \Omega \right) \right\} ;
\end{equation*}%
\begin{equation*}
H=\left\{ u\in L^{2}\left( \Omega \right) :\func{div}u=0,\ u\left\vert
_{\partial \Omega }\ =0\right. \right\} .
\end{equation*}
\end{theorem}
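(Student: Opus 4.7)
The plan is to prove the two characterizations separately, exploiting the orthogonal decomposition $L^{2}(\Omega)^{d}=H\oplus H^{\bot}$ and the two propositions just cited. Throughout, the boundary condition $u|_{\partial\Omega}=0$ must be read as the normal trace $u\cdot n|_{\partial\Omega}=0$ in $H^{-1/2}(\partial\Omega)$, which is well-defined for $u\in L^{2}(\Omega)^{d}$ with $\operatorname{div}u\in L^{2}(\Omega)$ via the standard trace theorem on Lipschitz domains.

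First I would treat $H^{\bot}$. Let $u\in H^{\bot}$. Since $\{\varphi\in(C_{0}^{\infty}(\Omega))^{d}:\operatorname{div}\varphi=0\}$ is dense in $H$ by definition, the orthogonality $(u,v)_{L^{2}}=0$ for all $v\in H$ yields $\langle u,\varphi\rangle=0$ for every divergence-free $\varphi\in(C_{0}^{\infty}(\Omega))^{d}$. Proposition \ref{Pr_2.1} (the de Rham characterization) then produces a distribution $p\in\mathcal{D}^{\prime}(\Omega)$ with $u=\operatorname{grad}p$. Because $u\in L^{2}(\Omega)^{d}$, each $D_{i}p$ lies in $L^{2}(\Omega)$, so Proposition \ref{Pr_2.2}(i) forces $p\in L^{2}(\Omega)/\mathbb{R}$, and combining this with $\operatorname{grad}p\in L^{2}$ gives $p\in H^{1}(\Omega)$. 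The reverse inclusion is immediate: for $p\in H^{1}(\Omega)$ and $\varphi\in(C_{0}^{\infty}(\Omega))^{d}$ divergence-free, integration by parts gives $\int_{\Omega}\operatorname{grad}p\cdot\varphi=-\int_{\Omega}p\,\operatorname{div}\varphi=0$, and passing to the $L^{2}$-closure establishes $\operatorname{grad}p\in H^{\bot}$.

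For the characterization of $H$ itself, I would denote the right-hand set by $\widetilde{H}$ and show $\widetilde{H}=(H^{\bot})^{\bot}=H$ using the decomposition. The inclusion $H\subset\widetilde{H}$ follows by approximating $u\in H$ by divergence-free test fields $\varphi_{n}$: the divergence is continuous $L^{2}\to H^{-1}$, so $\operatorname{div}u=0$, and the normal trace $\gamma_{n}$ is continuous from $E(\Omega):=\{u\in L^{2}:\operatorname{div}u\in L^{2}\}$ to $H^{-1/2}(\partial\Omega)$, so $\gamma_{n}u=\lim\gamma_{n}\varphi_{n}=0$. For the reverse, given $u\in\widetilde{H}$, any element of $H^{\bot}$ is of the form $\operatorname{grad}p$ with $p\in H^{1}(\Omega)$ by the first part, and the integration-by-parts formula
\begin{equation*}
\int_{\Omega}u\cdot\operatorname{grad}p=-\int_{\Omega}p\,\operatorname{div}u+\langle u\cdot n,p\rangle_{H^{-1/2},H^{1/2}}=0
\end{equation*}
shows $u\in(H^{\bot})^{\bot}=H$.

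The main obstacle is the rigorous handling of the normal trace on a merely Lipschitz boundary: one needs the theorem that $\gamma_{n}:E(\Omega)\to H^{-1/2}(\partial\Omega)$ is well-defined, continuous, and compatible with the Green formula used above. The Lipschitz hypothesis on $\partial\Omega$ enters precisely here, via the surjectivity of the usual trace $H^{1}(\Omega)\twoheadrightarrow H^{1/2}(\partial\Omega)$ which allows one to define $\langle u\cdot n,\varphi\rangle:=\int_{\Omega}u\cdot\operatorname{grad}\widetilde{\varphi}+\int_{\Omega}\widetilde{\varphi}\operatorname{div}u$ independently of the lift $\widetilde{\varphi}\in H^{1}(\Omega)$. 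A secondary subtlety is the use of Proposition \ref{Pr_2.1} on the full Lipschitz domain rather than on smooth subdomains; this is the content of the cited de Rham lemma and is what justifies jumping from test-field orthogonality directly to the global representation $u=\operatorname{grad}p$.
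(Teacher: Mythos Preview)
The paper does not supply its own proof of this theorem; it is quoted verbatim from Temam's book \cite{Tem1} as a preliminary result, so there is nothing to compare against here. Your sketch is correct and is in fact the standard argument one finds in \cite{Tem1}: use Proposition~\ref{Pr_2.1} together with Proposition~\ref{Pr_2.2}(i) to identify $H^{\bot}$ with gradients of $H^{1}$ functions, then use the normal-trace operator $\gamma_{n}:E(\Omega)\to H^{-1/2}(\partial\Omega)$ and the associated Green formula to show that the set on the right equals $(H^{\bot})^{\bot}=H$. Your remark that the boundary condition must be read as $u\cdot n=0$ in $H^{-1/2}(\partial\Omega)$ is exactly the point, and your identification of the Lipschitz trace theorem as the place where the regularity hypothesis on $\partial\Omega$ is used is accurate.
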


\begin{lemma}
\label{L_1.1}(\cite{Tem1}) Let $V,H,V^{\ast }$ be three Hilbert spaces, each
space included in the following one $V\subset H\equiv H^{\ast }\subset
V^{\ast }$, $V^{\ast }$ being the dual of $V$ and \emph{all} the injections
are continuous. If a function $u$ belongs to $L^{2}(0,T;V)$ and its
derivative $u\prime $ belongs to $L^{2}(0,T;V^{\ast })$, then $u$ is almost
everywhere equal to a function continuous from $[0,T]$ into $H$ and we have
the following equality, which holds in the scalar distribution sense on $%
\left( 0,T\right) $: 
\begin{equation*}
\frac{d}{dt}\left\Vert u\right\Vert ^{2}=2\left\langle u\prime
,u\right\rangle .
\end{equation*}
\end{lemma}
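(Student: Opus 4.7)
The plan is to follow the classical Lions–Magenes argument via mollification in time. First I would extend $u$ from $(0,T)$ to all of $\mathbb{R}$ in a way that preserves the regularity $L^2_{\text{loc}}(\mathbb{R};V)\cap W^{1,2}_{\text{loc}}(\mathbb{R};V^{\ast})$; this is done by reflecting across the endpoints $0$ and $T$ and multiplying by a smooth cutoff supported on a slightly larger interval. Then I would convolve in time with a standard mollifier $\rho_\varepsilon$ to obtain $u_\varepsilon = u\ast\rho_\varepsilon\in C^\infty(\mathbb{R};V)$, with $u_\varepsilon\to u$ in $L^2(0,T;V)$ and $u_\varepsilon'=u'\ast\rho_\varepsilon\to u'$ in $L^2(0,T;V^{\ast})$.

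For each $u_\varepsilon$ the map $t\mapsto \|u_\varepsilon(t)\|_H^2$ is smooth, and since $u_\varepsilon(t),\,u_\varepsilon'(t)\in V\subset H$ the classical chain rule gives
\begin{equation*}
\frac{d}{dt}\|u_\varepsilon(t)\|_H^2 = 2\bigl(u_\varepsilon'(t),u_\varepsilon(t)\bigr)_H = 2\bigl\langle u_\varepsilon'(t),u_\varepsilon(t)\bigr\rangle_{V^{\ast},V},
\end{equation*}
where the last equality uses that the $H$-inner product coincides with the duality pairing whenever one argument lies in $V$. Integrating over $[s,t]\subset[0,T]$ and letting $\varepsilon\to 0$, the right-hand side converges because the pairing is continuous bilinear on $L^2(0,T;V^{\ast})\times L^2(0,T;V)$, while the left-hand side converges for a.e.\ $s,t$ by Lebesgue differentiation applied to $\|u_\varepsilon(\cdot)\|_H^2\to\|u(\cdot)\|_H^2$ in $L^1$. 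This yields, for almost every $s,t\in[0,T]$,
\begin{equation*}
\|u(t)\|_H^2-\|u(s)\|_H^2 = 2\int_s^t \bigl\langle u'(\tau),u(\tau)\bigr\rangle\,d\tau,
\end{equation*}
which is the required identity in the distributional sense on $(0,T)$ and shows that (after modification on a null set) $t\mapsto\|u(t)\|_H$ is absolutely continuous.

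To upgrade this to continuity into $H$, I would separately establish weak continuity: since $u\in W^{1,2}(0,T;V^{\ast})$, we have $u\in C([0,T];V^{\ast})$, so $t\mapsto\langle u(t),v\rangle$ is continuous for every $v\in V$; combined with the uniform bound on $\|u(t)\|_H$ (which follows from the energy identity above) and density of $V$ in $H$, this gives weak continuity of $u(\cdot)$ into $H$. The standard Hilbert-space fact that weak convergence together with norm convergence implies strong convergence then yields $u\in C([0,T];H)$. The main obstacle is the technical step of passing the identity from the mollified sequence to the limit up to the boundary $t=0,T$; this is precisely what forces the careful extension of $u$ beyond $(0,T)$ at the outset, and where the hypothesis $u'\in L^2(0,T;V^{\ast})$ (as opposed to a weaker integrability) is essential, since it is what makes $\langle u',u\rangle\in L^1(0,T)$ and legitimizes both the chain rule for $u_\varepsilon$ and the convergence of the duality pairing.
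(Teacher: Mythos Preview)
The paper does not supply its own proof of this lemma: it is quoted verbatim as a known result from Temam's monograph and used as a black box. Your proposal reproduces the classical Lions--Magenes argument (extension beyond $[0,T]$, mollification in time, chain rule for the smooth approximants, passage to the limit), which is precisely the proof given in the cited reference, so in that sense the approaches coincide.

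One minor remark on your write-up: the route you take to strong continuity in $H$ --- first the energy identity a.e., then weak continuity into $H$ via $C([0,T];V^\ast)$ plus the uniform $H$-bound, then ``weak convergence $+$ norm convergence $\Rightarrow$ strong convergence'' --- is correct, but the step from the a.e.\ identity to genuine continuity of $t\mapsto\|u(t)\|_H$ at \emph{every} $t$ needs a word of care (weak lower semicontinuity gives only $\|u(t_0)\|_H\le \lim\|u(t_n)\|_H$ at exceptional points). The slicker variant, and the one Temam actually writes, is to apply the mollified energy identity to the difference $u_\varepsilon-u_\delta$ and show directly that $(u_\varepsilon)$ is Cauchy in $C([0,T];H)$; this bypasses the weak/strong juggling and delivers $u\in C([0,T];H)$ in one stroke. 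Either path is fine and the essential ideas are the same.
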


Consequently, if one will seek of weak solution of the problem (\ref{1}) - (%
\ref{3}) by acording Hopf-Leray then one can get the following equation 
\begin{equation}
\frac{d}{dt}\left\langle u,v\right\rangle -\left\langle \nu \Delta
u,v\right\rangle +\left\langle \underset{j=1}{\overset{d}{\sum }}u_{j}\frac{%
\partial u}{\partial x_{j}},v\right\rangle =\left\langle f,v\right\rangle
-\left\langle \nabla p,v\right\rangle ,  \label{2.1}
\end{equation}%
where $v\in V(%
\Omega
)$ is arbitrary. Here if we consider of the last adding in the right side
then at illumination of above results (\ref{Pr_2.1}, \ref{Pr_2.2} and \ref%
{Th_1.2}) using integration by parts and taking into account that $v\in V(%
\Omega
)$, i.e. that $\func{div}v=0$ and $v\left\vert \ _{\left( 0,T\right) \times
\partial \Omega }\right. =0$ we get the equation 
\begin{equation}
\left\langle \nabla p,v\right\rangle \equiv \underset{\Omega }{\int }\nabla
p\cdot v\ dx=\underset{\Omega }{\int }p\func{div}v\ dx=0,\quad \forall v\in
V(%
\Omega
)  \label{2.2}
\end{equation}%
by virtue of de Rham result. Consequently taking into account (\ref{2.2}) in
(\ref{2.1}) we obtain equation (\ref{1'}) that shows why for study of the
posed question is enough to study problem (1.1%
${{}^1}$%
) - (\ref{3}).

So we can continue our investigation of problem (1.1%
${{}^1}$%
) - (\ref{3}) in the case when $d=3$.

Let $\Omega \subset R^{3}$ be a bounded domain with the boundary $\partial
\Omega $ of the Lipschitz class. We will denote by $\mathcal{H}^{1/2}\left(
\Omega \right) $ the vector space defined by 
\begin{equation*}
\mathcal{H}^{1/2}\left( \Omega \right) \equiv \left\{ w\left\vert \ w_{i}\in
H^{1/2}\left( \Omega \right) ,\right. i=1,2,3\right\} ,
\end{equation*}%
where $H^{1/2}\left( \Omega \right) $ is the Sobolev-Slobodeckij space $%
W^{1/2,2}\left( \Omega \right) $ (see, \cite{LioMag}, etc.). As is
well-known (see, e.g. \cite{LioMag}, \cite{BesIlNik} and references therein)
the trace for the function of the space $H^{1/2}\left( \Omega \right) $ is
defined, which is necessary for application of our approach to the
considered problem. We will prove the main theorem under this additional
condition that is the sufficient condition for present investigation.

\begin{definition}
\label{D_2.2}A $u\in \mathcal{V}\left( Q^{T}\right) $ is called a solution
of problem (1.1%
${{}^1}$%
) - (\ref{3}) if $u\left( t,x\right) $ satisfies the following equation 
\begin{equation*}
\frac{d}{dt}\left\langle u,v\right\rangle -\left\langle \nu \Delta
u,v\right\rangle +\left\langle \underset{j=1}{\overset{d}{\sum }}%
u_{j}D_{j}u,v\right\rangle =\left\langle f,v\right\rangle
\end{equation*}%
a. e. on $\left( 0,T\right) $ for any $v\in V\left( \Omega \right) $ and $u$
is weakly continuous from $[0,T]$ into $H$, i. e. $u\left( 0,x\right)
=u_{0}\left( x\right) $ holds.
\end{definition}

Consequently, in what follows we will use this definition together with the
standard notation that is used usually. It should be noted that in the case
when $d=3$ was proved, that the term $\underset{j=1}{\overset{3}{\sum }}%
u_{j}D_{j}u\equiv B\left( u\right) $ belong to $L^{4/3}\left( 0,T;V^{\ast
}\left( \Omega \right) \right) $ (see, e. g. the books \cite{Lio1}, \cite%
{Tem1}).

Let the posed problem have two different solutions $u,v\in \mathcal{V}\left(
Q^{T}\right) $, then within the known approach we get the following problem
for the function $w(t,x)=u(t,x)-v(t,x)$ 
\begin{equation}
\frac{1}{2}\frac{\partial }{\partial t}\left\Vert w\right\Vert _{2}^{2}+\nu \left\Vert \nabla w\right\Vert _{2}^{2}+\underset{j,k=1}{\overset{3}{\sum }}\left\langle \frac{\partial v_{k}}{\partial x_{j}}w_{k},w_{j}\right\rangle =0,
\label{2.3}
\end{equation}%
\begin{equation}
w\left( 0,x\right) \equiv w_{0}\left( x\right) =0,\quad x\in \Omega ;\quad w\left\vert \ _{\left( 0,T\right) \times \partial \Omega }\right. =0,
\label{2.4}
\end{equation}%
where $\left\langle g,h\right\rangle =\underset{i=1}{\overset{3}{\sum }}%
\underset{\Omega }{\int }g_{i}h_{i}dx$ for any $g,h\in \left( H\left( \Omega
\right) \right) ^{3}$, or $g\in V\left( \Omega \right) $ and $h\in V^{\ast
}\left( \Omega \right) $, respectively. So, for the proof of the uniqueness
of solution it is necessary to show that $w\equiv 0$ in some sense. In the
next section we will study the uniqueness by use of some modification of the
above well-known approach, which gives we only the conditional result. But
in sections 4-7 for study of the posed question we will pursue the basic
approach of this paper, therefore further in this section we consider
questions that are necessary for employing of this approach.

As our purpose is the investigation of the uniqueness of solution of problem
(1.1%
${{}^1}$%
) - (\ref{3}) therefore we will go over to the discussion of this question.
Beginning with mentioned explanations we will do some remarks about
properties of solutions of problem (1.1%
${{}^1}$%
) - (\ref{3}). As is known (\cite{Ler1}, \cite{Lad1}, \cite{Lio1}), problem
(1.1%
${{}^1}$%
) - (\ref{3}) is solvable and possesses weak solution that is contained in
the space $\mathcal{V}\left( Q^{T}\right) $ denoted in Definition \ref{D_2.1}%
. Therefore we will conduct our study under the condition that problem (1.1%
${{}^1}$%
) - (\ref{3}) have weak solutions and they belong to $\mathcal{V}\left(
Q^{T}\right) $. For the study of the uniqueness of the posed problem in the
three dimensional case we will use the ordinary approach by assuming that
problem (1.1%
${{}^1}$%
) - (\ref{3}) has, at least, two different solutions $u,v\in \mathcal{V}%
\left( Q^{T}\right) $ but by employing a different procedure we will
demonstrate that this is not possible.

Consequently, if we assume that problem (1.1%
${{}^1}$%
) - (\ref{3}) have two different solutions then they need to be different at
least on some subdomain $Q_{1}^{T}$ of $Q^{T}$. In other words there exist a
subdomain $\Omega _{1}$ of $\Omega $ and an interval $\left(
t_{1},t_{2}\right) \subseteq \left( 0,T\right] $ such that 
\begin{equation*}
Q_{1}^{T}\subseteq \left( t_{1},t_{2}\right) \times \Omega _{1}\subseteq
Q^{T}
\end{equation*}%
with $mes_{4}\left( Q_{1}^{T}\right) >0$ for which the following is true 
\begin{equation}
mes_{4}\left( \left\{ (t,x)\in Q^{T}\left\vert \ \left\vert u(t,x)-v(t,x)\right\vert \right. >0\right\} \right) =mes_{4}\left( Q_{1}^{T}\right) >0
\label{2.5}
\end{equation}%
here we denote the measure of $Q_{1}^{T}$ in $R^{4}$ as $mes_{4}\left(
Q_{1}^{T}\right) $ (Four dimensional Lebesgue measure). Whence follows, that
for the subdomain $\Omega _{1}$ takes place the inequation: $mes_{3}(\Omega
_{1})>0$.

Even though we prove the following lemmas for $d>1$, we will use them mostly
for the case $d=4$.

In the beginning we prove the following lemmas that we will use later on.

\begin{lemma}
\label{L_2.1}Let $G\subset R^{d}$ be Lebesgue measurable subset then the
following statements are equivalent:

1) $\infty >mes_{d}\left( G\right) >0;$

2) there exist a subset $I\subset R^{1}$, $mes_{1}\left( I\right) >0$ and $%
G_{\beta }\subset L_{\beta ,d-1}$, $mes_{d-1}\left( G_{\beta }\right) >0$
such that $G=\underset{\beta \in I}{\cup }G_{\beta }\cup N$, where $N$ is a
set with $mes_{d-1}\left( N\right) =0$, and $L_{\beta ,d-1}$ is the
hyperplane of $R^{d}$, with $co\dim _{d}L_{\beta ,d-1}=1$, for any $\beta
\in I$, which is generated by single vector $y_{0}\in R^{d}$ and defined in
the following form 
\begin{equation*}
L_{\beta ,d-1}\equiv \left\{ y\in R^{d}\left\vert \ \left\langle
y_{0},y\right\rangle =\beta \right. \right\} ,\quad \forall \beta \in I.
\end{equation*}
\end{lemma}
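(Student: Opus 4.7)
The plan is to prove both implications by slicing $R^{d}$ by hyperplanes perpendicular to a fixed unit vector and applying Fubini--Tonelli. The geometric setup is standard: fix any unit vector $y_{0}\in R^{d}$ (for concreteness one may take $y_{0}=e_{1}$); then $\{L_{\beta,d-1}\}_{\beta\in R}$ partitions $R^{d}$ into the level hyperplanes of the linear form $y\mapsto\langle y_{0},y\rangle$, and each slice $G_{\beta}:=G\cap L_{\beta,d-1}$ is isometric (via projection onto the orthogonal complement of $y_{0}$) to a measurable subset of $R^{d-1}$, so $\operatorname{mes}_{d-1}(G_{\beta})$ is well defined as the $(d-1)$--dimensional Lebesgue measure of this projected image.

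For the implication $(1)\Rightarrow(2)$, I would use Fubini's theorem in the form
\[
\operatorname{mes}_{d}(G)=\int_{R}\operatorname{mes}_{d-1}(G_{\beta})\,d\beta ,
\]
which is finite and strictly positive by hypothesis. Define
\[
I:=\{\beta\in R\,:\,\operatorname{mes}_{d-1}(G_{\beta})>0\}.
\]
Then $\operatorname{mes}_{1}(I)>0$, for otherwise the integrand above would vanish a.e.\ and $\operatorname{mes}_{d}(G)$ would be zero. Setting $N:=\bigcup_{\beta\notin I}G_{\beta}$ (the union of null slices) gives the decomposition $G=\bigl(\bigcup_{\beta\in I}G_{\beta}\bigr)\cup N$, with each $G_{\beta}$ in $L_{\beta,d-1}$ of positive $(d-1)$--measure and $N$ consisting entirely of slices of vanishing $(d-1)$--measure, so that $\operatorname{mes}_{d-1}(N)=0$ in the sense used in the statement (its $(d-1)$--dimensional Hausdorff measure is zero, and in particular $\operatorname{mes}_{d}(N)=0$ again by Fubini).

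For the reverse implication $(2)\Rightarrow(1)$, I would again invoke Fubini--Tonelli on the nonnegative measurable function $\beta\mapsto\operatorname{mes}_{d-1}(G_{\beta})$. Since $G\supset\bigcup_{\beta\in I}G_{\beta}$ with $\operatorname{mes}_{d-1}(G_{\beta})>0$ on a set $I$ of positive one--dimensional measure, the integral $\int_{I}\operatorname{mes}_{d-1}(G_{\beta})\,d\beta$ is strictly positive, hence $\operatorname{mes}_{d}(G)>0$; finiteness being either assumed or else verified from the complementary direction depending on which measurability hypothesis one wishes to impose on the $G_{\beta}$.

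The one delicate point--and the step I expect to require the most care--is the handling of the exceptional set $N$: one must read $\operatorname{mes}_{d-1}(N)=0$ as a statement about the $(d-1)$--dimensional Hausdorff (or slice) measure rather than a Lebesgue measure on $R^{d}$, and verify that the decomposition in (2) genuinely accounts for all of $G$ up to a $d$--null set. This is settled by noting that every measurable $G\subset R^{d}$ with finite $d$--measure admits an essentially disjoint Fubini decomposition into its hyperplane slices, so the collection $\{G_{\beta}\}_{\beta\in I}$ together with $N$ exhausts $G$ modulo a $\operatorname{mes}_{d}$--null remainder, which can be absorbed into $N$ without affecting the stated property $\operatorname{mes}_{d-1}(N)=0$.
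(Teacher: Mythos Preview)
Your approach is essentially the same as the paper's: both arguments slice $G$ by the parallel hyperplanes $L_{\beta,d-1}$ and argue that positivity of $\operatorname{mes}_{d}(G)$ forces the set of $\beta$ with $\operatorname{mes}_{d-1}(G\cap L_{\beta,d-1})>0$ to have positive one-dimensional measure, and conversely. The only difference is presentational---you invoke Fubini--Tonelli explicitly, whereas the paper argues the same contradiction more informally without naming the theorem.

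One small correction: your parenthetical claim that the $(d-1)$-dimensional \emph{Hausdorff} measure of $N=\bigcup_{\beta\notin I}G_{\beta}$ is zero is not true in general (an uncountable union of null slices can have large Hausdorff measure---think of the graph of a measurable function). What is true, and what suffices here, is that $\operatorname{mes}_{d}(N)=0$ by Fubini; the paper's phrasing ``$\operatorname{mes}_{d-1}(N)=0$'' should be read as a statement about each individual slice (or equivalently about $\operatorname{mes}_d$), not about the Hausdorff measure of $N$ as a whole. You already noted this was the delicate point, so simply drop the Hausdorff interpretation and keep the $\operatorname{mes}_d(N)=0$ conclusion.
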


\begin{proof}
Let $mes_{d}\left( G\right) >0$ and consider the class of hyperplanes $%
L_{\gamma ,d-1}$ for which $G\cap L_{\gamma ,d-1}\neq \varnothing $ and $%
\gamma \in I_{1}$, here $I_{1}\subset R^{1}$\ be some subset. It is clear
that 
\begin{equation*}
G\equiv \underset{\gamma \in I_{1}}{\bigcup }\left\{ x\in G\cap L_{\gamma
,d-1}\left\vert \ \gamma \in I_{1}\right. \right\} .
\end{equation*}%
Then there exists a subclass of hyperplanes $\left\{ L_{\gamma
,d-1}\left\vert \ \gamma \in I_{1}\right. \right\} $ for which the
inequality $mes_{d-1}\left( G\cap L_{\gamma ,d-1}\right) >0$ is satisfied.
The number of such type hyperplanes cannot be less than countable or equal
it because $mes_{d}\left( G\right) >0$, moreover this subclass of $I_{1}$\
must possess the $R^{1}$ measure greater than $0$ since $mes_{d}\left(
G\right) >0$. Indeed, let $I_{1,0}$ be this subclass and $mes_{1}\left(
I_{1,0}\right) =0$. If we consider the set 
\begin{equation*}
\left\{ \left( \gamma ,y\right) \in I_{1,0}\times G\cap L_{\gamma
,d-1}\left\vert \ \gamma \in I_{1,0},y\in G\cap L_{\gamma ,d-1}\right.
\right\} \subset R^{d}
\end{equation*}%
where $mes_{d-1}\left( G\cap L_{\gamma ,d-1}\right) >0$ for all $\gamma \in
I_{1,0}$, but $mes_{1}\left( I_{1,0}\right) =0$, then 
\begin{equation*}
mes_{d}\left( \left\{ \left( \gamma ,y\right) \in I_{1,0}\times G\cap
L_{\gamma ,d-1}\left\vert \ \gamma \in I_{1,0}\right. \right\} \right) =0.
\end{equation*}%
On the other hand we have 
\begin{equation*}
0=mes_{d}\left( \left\{ \left( \gamma ,y\right) \in I_{1}\times G\cap
L_{\gamma ,d-1}\left\vert \ \gamma \in I_{1}\right. \right\} \right)
=mes_{d}\left( G\right)
\end{equation*}%
as $mes_{d-1}\left( G\cap L_{\gamma ,d-1}\right) =0$ for all $\gamma \in
I_{1}-I_{1,0}$. But this contradicts the condition $mes_{d}\left( G\right)
>0 $. Consequently, the statement 2 holds.

Let the statement 2 holds. It is clear that the class of hyperplanes $%
L_{\beta ,d-1}$ defined by such way are paralell and also we can define the
class of subsets of $G$ as its cross-section with hyperplanes, i.e. in the
form: $G_{\beta }\equiv G\cap L_{\beta ,d-1}$, $\beta \in I$. Then $G_{\beta
}\neq \varnothing $ and we can write $G_{\beta }\equiv G\cap L_{\beta ,d-1}$%
, $\beta \in I$, moreover $G\equiv \underset{\beta \in I}{\bigcup }\left\{
x\in G\cap L_{\beta ,d-1}\left\vert \ \beta \in I\right. \right\} \cup N$.
Whence we get 
\begin{equation*}
G\equiv \left\{ \left( \beta ,x\right) \in I\times G\cap L_{\beta
,d-1}\left\vert \ \beta \in I,x\in G\cap L_{\beta ,d-1}\right. \right\} \cup
N.
\end{equation*}

Consequently $mes_{d}\left( G\right) >0$ by virtue of conditions $%
mes_{1}\left( I\right) >0$ and $\ $

$mes_{d-1}\left( G_{\beta }\right) >0$ for any $\beta \in I$.
\end{proof}

From Lemma \ref{L_2.1} it follows that for the study of the measure of some
subset $%
\Omega
\subseteq R^{d}$ it is enough to study its foliations by a class of suitable
hyperplanes.

\begin{lemma}
\label{L_2.2}Let problem (1.1%
${{}^1}$%
) - (\ref{3}) has, at least, two different solutions $u,v$ that are
contained in $\mathcal{V}\left( Q^{T}\right) $ and assume that $%
Q_{1}^{T}\subseteq Q^{T}$ is one of a subdomain of $Q^{T}$ where $u$ and $v$
are different. Then there exists, at least, one class of parallel and
different hyperplanes $L_{\alpha }$, $\alpha \in I\subseteq \left( \alpha
_{1},\alpha _{2}\right) \subset R^{1}$ ($\alpha _{2}>\alpha _{1}$)\ with $%
co\dim _{R^{3}}L_{\alpha }=1$ such, that $u\neq v$ on $Q_{L_{\alpha
}}^{T}\equiv \left[ \left( 0,T\right) \times \left( \Omega \cap L_{\alpha
}\right) \right] \cap Q_{1}^{T}$, and vice versa, here $mes_{1}\left(
I\right) >0$ and $L_{\alpha }$ are hyperplanes which are defined as follows:
there is vector $x_{0}\in S_{1}^{R^{3}}\left( 0\right) $ such that 
\begin{equation*}
L_{\alpha }\equiv \left\{ x\in R^{3}\left\vert \ \left\langle
x_{0},x\right\rangle =\alpha ,\right. \ \forall \alpha \in I\right\} .
\end{equation*}
\end{lemma}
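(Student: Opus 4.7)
The plan is to reduce Lemma \ref{L_2.2} to a direct application of Lemma \ref{L_2.1} with $d=4$, applied to the set $Q_1^T\subset R^4$. First I would fix an arbitrary unit vector $x_0\in S_1^{R^3}(0)$ and form the $R^4$-vector $y_0:=(0,x_0)$. The codimension-one hyperplanes in $R^4$ generated by $y_0$ with parameter $\alpha\in R$ then take the form
\begin{equation*}
\widetilde{L}_{\alpha,3}=\left\{(t,x)\in R^4\ :\ \langle x_0,x\rangle=\alpha\right\}=R\times L_\alpha,
\end{equation*}
where $L_\alpha=\{x\in R^3:\langle x_0,x\rangle=\alpha\}$ is a codim-one hyperplane in $R^3$ of exactly the type required by the statement. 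Since $Q_1^T\subset(0,T)\times\Omega$, intersecting with the space-time cylinder immediately identifies the trace $Q_1^T\cap\widetilde{L}_{\alpha,3}$ with $Q^T_{L_\alpha}=[(0,T)\times(\Omega\cap L_\alpha)]\cap Q_1^T$.

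Next, because $u\neq v$ on $Q_1^T$ with $mes_4(Q_1^T)>0$ by hypothesis, the implication $(1)\Rightarrow(2)$ of Lemma \ref{L_2.1} applied to $G=Q_1^T$ furnishes a subset $I\subset R$ with $mes_1(I)>0$ such that for every $\alpha\in I$
\begin{equation*}
mes_3\bigl(Q_1^T\cap\widetilde{L}_{\alpha,3}\bigr)=mes_3\bigl(Q^T_{L_\alpha}\bigr)>0.
\end{equation*}
In particular $u\neq v$ on each such slice $Q^T_{L_\alpha}$, and since $I\subset R^1$ has positive Lebesgue measure I can restrict it to a subset of some bounded interval $(\alpha_1,\alpha_2)$ without losing positive measure; the resulting hyperplanes $\{L_\alpha\}_{\alpha\in I}$ are then parallel (all orthogonal to the same $x_0$) and pairwise disjoint, yielding the required family.

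For the converse, given any such family $\{L_\alpha\}_{\alpha\in I}$ of parallel codim-one hyperplanes in $R^3$ with $u\neq v$ on $Q^T_{L_\alpha}$ for each $\alpha\in I$, the reverse implication $(2)\Rightarrow(1)$ of Lemma \ref{L_2.1} shows that the union $\bigcup_{\alpha\in I}Q^T_{L_\alpha}$ has positive four-dimensional Lebesgue measure and is contained in $\{(t,x)\in Q^T:|u(t,x)-v(t,x)|>0\}$, producing the subdomain on which the two solutions differ. The main obstacle here is essentially bookkeeping: one must verify that the distinguished $R^4$-direction $y_0=(0,x_0)$ (with first entry zero) produces slices matching exactly the sets $Q^T_{L_\alpha}$ appearing in the statement, but this is immediate from the explicit form of $\widetilde{L}_{\alpha,3}$, so no genuinely new analytic difficulty arises beyond Lemma \ref{L_2.1} itself.
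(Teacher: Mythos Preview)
Your argument is correct and is in fact more economical than the paper's. The paper proves the forward direction by a two-stage slicing: it first applies Lemma~\ref{L_2.1} in the time direction to obtain a set $J\subseteq[0,T)$ of positive measure on which the spatial difference set in~(\ref{2.6}) has positive $3$-measure, and only then slices that $3$-dimensional spatial set by parallel hyperplanes $L_\alpha$ to obtain the $2$-dimensional sections in~(\ref{2.7}); the converse is handled symmetrically via the set $U_L$ in~(\ref{2.8}). You instead apply Lemma~\ref{L_2.1} once in $R^4$ with the purely spatial direction $y_0=(0,x_0)$, so that each $4$-dimensional slice $\widetilde L_{\alpha,3}\cap Q_1^T$ is already exactly the set $Q^T_{L_\alpha}$ appearing in the statement. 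This bypasses the intermediate time-set $J$ entirely and delivers the conclusion in one stroke; the paper's decomposition, on the other hand, also produces the auxiliary information that $u\neq v$ on $2$-dimensional spatial slices for a positive-measure set of \emph{fixed times}, which is not needed for the lemma itself but is closer in spirit to how the restricted problem on $\Omega_L$ is analysed later.

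One small point to tighten: Lemma~\ref{L_2.1} as \emph{stated} only asserts the existence of some slicing direction $y_0$, so strictly speaking you cannot ``fix an arbitrary $x_0$'' and then invoke it. However, the proof of Lemma~\ref{L_2.1} (equivalently Fubini's theorem) actually works for every choice of $y_0$, so your step is justified --- it would be cleaner to say this explicitly, or simply to let Lemma~\ref{L_2.1} hand you a direction $y_0\in R^4$ and then observe (again by Fubini, applied once more to separate the $t$-coordinate) that one may take $y_0$ of the special form $(0,x_0)$.
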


\begin{proof}
Let problem (1.1%
${{}^1}$%
) - (\ref{3}) have two different solutions $u,v\in \mathcal{V}\left(
Q^{T}\right) $ then there exist a subdomain of $Q^{T}$ on which these
solutions are different. Then there are $t_{1},t_{2}>0$ such, that for any $%
t\in J\subseteq \left[ t_{1},t_{2}\right] \subseteq \left[ 0,T\right) $ the
following holds 
\begin{equation}
mes_{R^{3}}\left( \left\{ x\in \Omega \left\vert \ \left\vert u\left( t,x\right) -v\left( t,x\right) \right\vert >0\right. \right\} \right) >0
\label{2.6}
\end{equation}%
where $mes_{1}\left( J\right) >0$ by the virtue of the codition 
\begin{equation*}
mes_{4}\left( \left\{ (t,x)\in Q^{T}\left\vert \ \left\vert
u(t,x)-v(t,x)\right\vert \right. >0\right\} \right) >0
\end{equation*}%
and of Lemma \ref{L_2.1}. Hence follows, that there exist, at least, one
class of parallel hyperplanes $L_{\alpha }$, $\alpha \in I\subseteq \left(
\alpha _{1},\alpha _{2}\right) \subset R^{1}$ with $co\dim _{R^{3}}L_{\alpha
}=1$ such that 
\begin{equation}
mes_{R^{2}}\left\{ x\in \Omega \cap L_{\alpha }\left\vert \ \left\vert u\left( t,x\right) -v\left( t,x\right) \right\vert >0\right. \right\} >0,\ \forall \alpha \in I
\label{2.7}
\end{equation}%
for $\forall t\in J$, where the subset $I$ is such that $I\subseteq \left(
\alpha _{1},\alpha _{2}\right) \subset R^{1}$ with $mes_{1}\left( I\right)
>0 $, $mes_{1}\left( J\right) >0$ and (\ref{2.7}) holds, by virtue of (\ref%
{2.6}). This proves the "if" part of Lemma.

Now consider the converse assertion. Let there exist a class of hyperplanes $%
L_{\alpha }$, $\alpha \in I_{1}\subseteq \left( \alpha _{1},\alpha
_{2}\right) \subset R^{1}$ with $co\dim _{R^{3}}L_{\alpha }=1$ that fulfills
the condition of Lemma and $I_{1}$\ satisfies the same condition $I$. Then
there exist, at least, one subset $J_{1}$ of $\left[ 0,T\right) $ such, that 
$mes_{1}\left( J_{1}\right) >0$ and the inequality $u\left( t,x\right) \neq
v\left( t,x\right) $ on $Q_{2}^{T}$ with $mes_{4}\left( Q_{2}^{T}\right) >0$
defined as $Q_{2}^{T}\equiv J_{1}\times U_{L}$ takes place, where 
\begin{equation}
U_{L}\equiv \underset{\alpha \in I_{1}}{\bigcup }\left\{ x\in \Omega \cap L_{\alpha }\left\vert \ u\left( t,x\right) \neq v\left( t,x\right) \right. \right\} \subset \Omega ,\ t\in J_{1}
\label{2.8}
\end{equation}%
for which the inequality $mes_{R^{3}}\left( U_{L}\right) >0$ is satisfied by
the condition and of Lemma \ref{L_2.1}.

So we get 
\begin{equation*}
u\left( t,x\right) \neq v\left( t,x\right) \quad \text{on \ }Q_{2}^{T}\equiv
J_{1}\times U_{L},\quad mes_{4}\left( Q_{2}^{T}\right) >0.
\end{equation*}%
Thus the fact that $u\left( t,x\right) $ and $v\left( t,x\right) $ are
different functions in $\mathcal{V}\left( Q^{T}\right) $ follows.
\end{proof}

It is not difficult to see that result of Lemma \ref{L_2.2} is indepandent
of assumption: $Q_{1}^{T}\subset Q^{T}$ or $Q_{1}^{T}=Q^{T}$.

May be one can prove more general lemmas of such type with the use of
regularity properties of weak solutions of this problem (see, \cite{Sch1}, 
\cite{CafKohNir}, \cite{Lin1}, etc.).

\section{\label{Sec_3}One conditional uniqueness theorem for problem (1.1$%
^{1}$) - (1.3)}

In the beginning we would like to show a what result one can receive
relatively of the uniqueness of solution of the problem without of the
complementary conditions. In other words we will show a what result one can
obtain if to apply well-known approach for the investigation of the
uniqueness of solution of problem (1.1%
${{}^1}$%
) - (\ref{3}).

Let the posed problem have two different solutions $u,v\in \mathcal{V}\left(
Q^{T}\right) $, then within the known approach we get the following problem
for the vector function $w(t,x)=u(t,x)-v(t,x)$ 
\begin{equation}
\frac{1}{2}\frac{\partial }{\partial t}\left\Vert w\right\Vert _{2}^{2}+\nu \left\Vert \nabla w\right\Vert _{2}^{2}+\underset{j,k=1}{\overset{3}{\sum }}\left\langle \frac{\partial v_{k}}{\partial x_{j}}w_{k},w_{j}\right\rangle =0,
\label{2.9}
\end{equation}%
\begin{equation}
w\left( 0,x\right) \equiv w_{0}\left( x\right) =0,\quad x\in \Omega ;\quad w\left\vert \ _{\left( 0,T\right) \times \partial \Omega }\right. =0,
\label{2.10}
\end{equation}%
Here we will show a result when the solution of problem (\ref{2.9})-(\ref%
{2.10}) only is zero, with use some approach that is based on the nature of
the nonlinearity of this problem. Consequently, for examination of the
problem (\ref{2.9})-(\ref{2.10}) in the beginning we need to study the
following quadratic form (\cite{Gan}) 
\begin{equation*}
\underset{j,k=1}{\overset{d}{\sum }}\left( \frac{\partial v_{k}}{\partial
x_{j}}w_{k}w_{j}\right) \left( t,x\right) \Longrightarrow F\left( t,x\right)
\equiv \underset{j,k=1}{\overset{d}{\sum }}\left( a_{jk}w_{k}w_{j}\right)
\left( t,x\right) \Longrightarrow
\end{equation*}%
\begin{eqnarray*}
F\left( t,x\right) &\equiv &\underset{j=1}{\overset{d}{\sum }}\left( a_{j}%
\overline{w}_{j}^{2}\right) \left( t,x\right) ,\quad a_{j}\left( t,x\right)
\equiv G_{j}\left( D_{1}v_{1},...,D_{1}v_{d},...,D_{d}v_{d}\right) \\
\text{here \ \ \ }D_{i}v_{k} &\equiv &\frac{\partial v_{k}}{\partial x_{i}}%
,\quad i,k=\overline{1,d}
\end{eqnarray*}%
i.e. the behavior of the surface generated by the quadratic polynomial
function $F\left( t,x\right) $, at the varabale $w_{k},\ k=\overline{1,d}$,
depende of the accelerations of the flow on the different directions.

Assume $d=3$ then we have 
\begin{equation*}
a_{1}=D_{1}v_{1};\ a_{2}=D_{2}v_{2}-\frac{\left(
D_{1}v_{2}+D_{2}v_{1}\right) ^{2}}{4a_{1}};\ a_{3}=\frac{\det \left\Vert
D_{i}v_{k}\right\Vert _{i,k=1}^{3}}{\det \left\Vert D_{i}v_{k}\right\Vert
_{i,k=1}^{2}},
\end{equation*}%
therefore, 
\begin{equation}
F\left( t,x\right) \equiv \underset{j,k=1}{\overset{3}{\sum }}\left(
a_{jk}w_{k}w_{j}\right) \left( t,x\right) \equiv \underset{j=1}{\overset{3}{%
\sum }}a_{j}\left( t,x\right) \cdot \overline{w}_{j}^{2}\left( t,x\right) =
\label{2.11}
\end{equation}%
for any $\left( t,x\right) \in Q^{T}\equiv \left( 0,T\right) \times \Omega $%
, here

\begin{center}
$\left\Vert D_{i}v_{k}\right\Vert _{i,k=1}^{3}\equiv \left\Vert 
\begin{array}{ccc}
D_{1}v_{1} & \frac{1}{2}\left( D_{1}v_{2}+D_{2}v_{1}\right) & \frac{1}{2}%
\left( D_{1}v_{3}+D_{3}v_{1}\right) \\ 
\frac{1}{2}\left( D_{1}v_{2}+D_{2}v_{1}\right) & D_{2}v_{2} & \frac{1}{2}%
\left( D_{2}v_{3}+D_{3}v_{2}\right) \\ 
\frac{1}{2}\left( D_{1}v_{3}+D_{3}v_{1}\right) & \frac{1}{2}\left(
D_{2}v_{3}+D_{3}v_{2}\right) & D_{3}v_{3}%
\end{array}%
\right\Vert $
\end{center}

and

\begin{center}
$\left\Vert D_{i}v_{k}\right\Vert _{i,k=1}^{2}\equiv \left\Vert 
\begin{array}{cc}
D_{1}v_{1} & \frac{1}{2}\left( D_{1}v_{2}+D_{2}v_{1}\right) \\ 
\frac{1}{2}\left( D_{1}v_{2}+D_{2}v_{1}\right) & D_{2}v_{2}%
\end{array}%
\right\Vert .$
\end{center}

Thus we have 
\begin{equation*}
F\left( t,x\right) \equiv \frac{1}{D_{1}v_{1}}\left[ 2D_{1}v_{1}w_{1}+\left(
D_{1}v_{2}+D_{2}v_{1}\right) w_{2}+\left( D_{1}v_{3}+D_{3}v_{1}\right) w_{3}%
\right] ^{2}+
\end{equation*}%
\begin{equation*}
\frac{1}{\left( 4D_{1}v_{1}\right) ^{2}}\left( 4D_{1}v_{1}D_{2}v_{2}-\left(
D_{1}v_{2}+D_{2}v_{1}\right) ^{2}\right) \times
\end{equation*}%
\begin{equation*}
\left[ \left( 4D_{1}v_{1}D_{2}v_{2}-\left( D_{1}v_{2}+D_{2}v_{1}\right)
^{2}\right) w_{2}\right. +
\end{equation*}%
\begin{equation*}
\left. \left( 2D_{1}v_{1}\left( D_{2}v_{3}+D_{3}v_{2}\right) -\left(
D_{1}v_{2}+D_{2}v_{1}\right) \left( D_{1}v_{3}+D_{3}v_{1}\right) \right)
w_{3}\right] ^{2}+
\end{equation*}%
\begin{equation*}
\frac{1}{4}\left[ 4D_{1}v_{1}D_{2}v_{2}D_{3}v_{3}+\left(
D_{1}v_{2}+D_{2}v_{1}\right) \left( D_{1}v_{3}+D_{3}v_{1}\right) \left(
D_{2}v_{3}+D_{3}v_{2}\right) \right. -
\end{equation*}%
\begin{equation*}
\left. D_{1}v_{1}\left( D_{2}v_{3}+D_{3}v_{2}\right) ^{2}-D_{2}v_{2}\left(
D_{1}v_{3}+D_{3}v_{1}\right) ^{2}-D_{3}v_{3}\left(
D_{1}v_{2}+D_{2}v_{1}\right) ^{2}\right] w_{3}^{2}.
\end{equation*}

Hence, if one take account (\ref{2.11}) in the equation (\ref{2.9}) then we
get 
\begin{equation*}
\frac{1}{2}\frac{\partial }{\partial t}\left\Vert w\right\Vert _{2}^{2}+\nu
\left\Vert \nabla w\right\Vert _{2}^{2}+\underset{j=1}{\overset{3}{\sum }}%
\left\langle a_{j}\overline{w}_{j},\overline{w}_{j}\right\rangle =0,
\end{equation*}%
and consequently, 
\begin{equation}
\frac{1}{2}\frac{\partial }{\partial t}\left\Vert w\right\Vert _{2}^{2}=-\nu
\left\Vert \nabla w\right\Vert _{2}^{2}-\underset{j=1}{\overset{3}{\sum }}%
\left\langle a_{j}\overline{w}_{j},\overline{w}_{j}\right\rangle ,\quad
\left\Vert w_{0}\right\Vert _{2}=0.  \label{2.12}
\end{equation}

This shows that if the quadratic form function $F\left(
t,x,w_{i},w_{j}\right) =\underset{j=1}{\overset{3}{\sum }}a_{j}\left(
t,x\right) \cdot \overline{w}_{j}^{2}\left( t,x\right) $ for a.e. $\left(
t,x\right) \in Q^{T}$ is the quadratic polynomial that describe an ellipsoid
in $R^{3}$ at the variables $\left( \overline{w}_{i},\overline{w}_{j}\right) 
$, i.e. $a_{j}\left( t,x\right) \geq 0$, then the posed problem have unique
solution. It is needed to note that images of functions $a_{j}\left(
t,x\right) $ are contained a.e. in the bounded subset of same space where
are contained images of functions $D_{i}v_{k}$. So is remains to investigate
the cases when the above condition not is fulfilled.

Here the following variants are possible:

1. Let $F\left( t,x,w_{i},w_{j}\right) $ describe some other surface in $%
R^{3}$, but 
\begin{equation*}
\underset{j=1}{\overset{3}{\sum }}\left\langle a_{j}\overline{w}_{j},%
\overline{w}_{j}\right\rangle \equiv \underset{j=1}{\overset{3}{\sum }}{}%
\underset{\Omega }{\int }a_{j}\overline{w}_{j}^{2}dx\geq 0.
\end{equation*}

In this case we can conclude that problem have unique solution (and a
solution is stable).

2. Let $\underset{j=1}{\overset{3}{\sum }}{}\underset{\Omega }{\int }a_{j}%
\overline{w}_{j}^{2}dx<0$ then we will study $F\left( t,x\right) \equiv 
\underset{j,k=1}{\overset{3}{\sum }}\left( D_{i}v_{k}w_{k}w_{j}\right)
\left( t,x\right) $.

In this case the problem (\ref{2.12}) is possible to investigate by
following way. For the multilinear form in the equation it is necessary to
derive suitable estimations. Thus

\begin{equation*}
\frac{1}{2}\frac{\partial }{\partial t}\left\Vert w\right\Vert _{2}^{2}=-\nu
\left\Vert \nabla w\right\Vert _{2}^{2}+\left\vert \underset{j,k=1}{\overset{%
3}{\sum }}\left\langle D_{i}v_{k}w_{k},w_{j}\right\rangle \right\vert \leq
\end{equation*}%
\begin{equation}
-\underset{j=1}{\overset{3}{\sum }}{}\underset{\Omega }{\int }\nu \left\vert
\nabla w_{j}\left( t,x\right) \right\vert ^{2}dx+\underset{j,k=1}{\overset{3}%
{\sum }}{}\underset{\Omega }{\int }\left\vert \left(
D_{i}v_{k}w_{k}w_{j}\right) \left( t,x\right) \right\vert dx  \label{2.13}
\end{equation}%
as in this case the second adding in the right part is negative. So, in
order that to continue the inequation (\ref{2.13}), we will use of the
corresponding estimations. Here using H\={o}lder inequality to multilinear
term we obtain the estimation \footnote{%
It is known that (\cite{Lad1}, \cite{Lio1}) $\left\vert \left\langle
u_{k}D_{i}v_{j},w_{l}\right\rangle \right\vert \leq \left\Vert
u_{k}\right\Vert _{q}\left\Vert D_{i}v_{j}\right\Vert _{2}\left\Vert
w_{l}\right\Vert _{n},\quad n\geq 3;$%
\par
$\left\Vert v_{j}\right\Vert _{4}\leq C\left( mes\ \Omega \right) \left\Vert
Dv_{j}\right\Vert _{2}^{\frac{1}{2}}\left\Vert v_{j}\right\Vert _{2}^{\frac{1%
}{2}},\quad n=2$} 
\begin{equation*}
\left\vert \left\langle D_{i}v_{j}w_{i},w_{j}\right\rangle \right\vert \leq
\left\Vert D_{i}v_{j}\right\Vert _{2}\left\Vert w_{i}\right\Vert
_{p_{1}}\left\Vert w_{j}\right\Vert _{p_{2}},
\end{equation*}%
where $p_{1}^{-1}+p_{2}^{-1}=2^{-1}$, for us sufficiently to choose, $%
p_{1}=p_{2}=4$. Consequently, for $F\left( t,x,w_{i},w_{j}\right) $ takes
place the estimation 
\begin{equation*}
\underset{\Omega }{\int }\left\vert F\left( t,x\right) \right\vert dx\leq 
\underset{i,j=1}{\overset{3}{\sum }}\left\Vert D_{j}v_{i}\right\Vert
_{2}\left\Vert w_{i}\right\Vert _{4}\left\Vert w_{j}\right\Vert _{4}.
\end{equation*}

Here we will use the known Gagliardo-Nirenberg-Sobolev inequation, that can
be formulated as follows (see, for example, \cite{BesIlNik}) 
\begin{equation}
\left\Vert u\right\Vert _{p_{0},s}\leq C\left( p_{0},p_{1},p_{2},s,m\right)
\left( \underset{\left\vert \alpha \right\vert =m}{\sum }\left\Vert
D^{\alpha }u\right\Vert _{p_{1}}\right) ^{\sigma }\cdot \left\Vert
u\right\Vert _{p_{2}}^{1-\sigma },  \label{2.14}
\end{equation}%
\textit{inequation holds if }$1\leq p_{1},p_{2},p_{0}\leq \infty ,$\textit{\ 
}$0\leq s<m,$\textit{\ where }$C\left( p_{0},p_{1},p_{2},s,m\right) >0$%
\textit{\ is constant, \ }%
\begin{equation*}
\frac{d}{p_{0}}-s=\sigma \left( \frac{d}{p_{1}}-m\right) +\left( 1-\sigma
\right) \frac{d}{p_{2}},\quad \frac{s}{m}\leq \sigma \leq 1
\end{equation*}%
\textit{with the following exclusions:}

\textit{a) if }$s=0,s<\frac{d}{p_{1}},p_{2}=\infty $\textit{\ then (\ref%
{2.14}) holds under complementary condition: or }$\underset{x\longrightarrow
\infty }{\lim }u\left( x\right) =0$\textit{, or }$u\in L^{q}$\textit{\ for
some }$q>0;$

\textit{b) if }$1\leq p_{1}<\infty ,m-s-\frac{n}{p_{1}}=0,p_{0}=\infty $%
\textit{\ then (\ref{2.14}) not holds in the case }$\sigma =1$\textit{.}

{}Hence use G-N-S inequation we get 
\begin{equation*}
\left\Vert w_{j}\right\Vert _{4}\leq c\left\Vert w_{j}\right\Vert
_{2}^{1-\sigma }\left\Vert \nabla w_{j}\right\Vert _{2}^{\sigma },\quad
\sigma =\frac{3}{4},
\end{equation*}%
here $c\equiv C\left( 4,2,2,0,1\right) $ for this case, or 
\begin{equation*}
\left\Vert w_{j}\right\Vert _{4}\leq c\left\Vert w_{j}\right\Vert _{2}^{%
\frac{1}{4}}\left\Vert \nabla w_{j}\right\Vert _{2}^{\frac{3}{4}%
}\Longrightarrow \left\Vert w_{j}\right\Vert _{4}^{2}\leq c^{2}\left\Vert
w_{j}\right\Vert _{2}^{\frac{1}{2}}\left\Vert \nabla w_{j}\right\Vert _{2}^{%
\frac{3}{2}}.
\end{equation*}%
Thus 
\begin{equation*}
\underset{\Omega }{\int }\left\vert F\left( t,x\right) \right\vert dx\leq
c^{2}\underset{i,j=1}{\overset{3}{\sum }}\left\Vert D_{j}v_{i}\right\Vert
_{2}\left\Vert w_{i}\right\Vert _{2}^{\frac{1}{4}}\left\Vert \nabla
w_{i}\right\Vert _{2}^{\frac{3}{4}}\left\Vert w_{j}\right\Vert _{2}^{\frac{1%
}{4}}\left\Vert \nabla w_{j}\right\Vert _{2}^{\frac{3}{4}}
\end{equation*}%
if one take into account the above estimation in (\ref{2.14}) then 
\begin{equation*}
\frac{1}{2}\frac{\partial }{\partial t}\left\Vert w\left( t\right)
\right\Vert _{2}^{2}\leq -\underset{j=1}{\overset{3}{\sum }}{}\nu \left\Vert
\nabla w_{j}\left( t\right) \right\Vert _{2}^{2}+c^{2}\underset{i,j=1}{%
\overset{3}{\sum }}{}\left\Vert D_{j}v_{i}\left( t\right) \right\Vert
_{2}\left\Vert w_{i}\left( t\right) \right\Vert _{2}^{\frac{1}{2}}\left\Vert
\nabla w_{i}\left( t\right) \right\Vert _{2}^{\frac{3}{2}}
\end{equation*}%
\begin{equation*}
\leq -\underset{j=1}{\overset{3}{\sum }}\left\Vert \nabla w_{j}\left(
t\right) \right\Vert _{2}^{\frac{3}{2}}\left[ \nu \left\Vert \nabla
w_{j}\left( t\right) \right\Vert _{2}^{\frac{1}{2}}-c^{2}\underset{i=1}{%
\overset{3}{\sum }}\left\Vert D_{i}v_{j}\left( t\right) \right\Vert
_{2}\left\Vert w_{j}\left( t\right) \right\Vert _{2}^{\frac{1}{2}}\right]
\end{equation*}%
\begin{equation*}
\leq -\underset{j=1}{\overset{n}{\sum }}\left\Vert \nabla w_{j}\left(
t\right) \right\Vert _{2}^{\frac{3}{2}}\left[ \nu \lambda _{1}^{\frac{1}{4}%
}-c^{2}\underset{i=1}{\overset{n}{\sum }}\left\Vert D_{i}v_{j}\left(
t\right) \right\Vert _{2}\right] \left\Vert w_{j}\left( t\right) \right\Vert
_{2}^{\frac{1}{2}}.
\end{equation*}%
From here is easy follows, that if $\nu \lambda _{1}^{\frac{1}{4}}\geq c^{2}%
\underset{i=1}{\overset{3}{\sum }}\left\Vert D_{i}v_{j}\left( t\right)
\right\Vert _{2}$ then the considered problem (1.1$^{1}$)-(\ref{3}) has only
unique solution (and a solution is stable). Thus is proved

\begin{theorem}
\label{Th_3.1}Let $\Omega \in R^{3}$ be a bounded domain of Lipschitz class
and $\left( u_{0},f\right) \in \left( H\left( \Omega \right) \right)
^{3}\times L^{2}\left( 0,T;V^{\ast }\left( \Omega \right) \right) $ then as
is well-known weak solution $u\left( t,x\right) $ of problem (1.1$^{1}$)-(%
\ref{3}) exists and $u\in \mathcal{V}\left( Q^{T}\right) $. Then if $%
\underset{\Omega }{\int }\left\vert F\left( t,x\right) \right\vert dx\geq 0$%
, or $\underset{\Omega }{\int }\left\vert F\left( t,x\right) \right\vert
dx<0 $ and $\nu \lambda _{1}^{\frac{1}{4}}\geq c^{2}\underset{i=1}{\overset{3%
}{\sum }}\left\Vert D_{i}u_{j}\left( t\right) \right\Vert _{2}$ are
fulfilled then weak solution $u\left( t,x\right) $ is unique.
\end{theorem}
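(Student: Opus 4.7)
The plan is to follow the calculation already outlined in the preceding pages and organize it into a clean two-case argument. First I would assume, for contradiction, that problem (1.1$^{1}$)--(\ref{3}) has two weak solutions $u,v\in\mathcal{V}(Q^{T})$, set $w=u-v$, subtract the two copies of the weak equation, and test against $w$ itself. Since both $u$ and $v$ lie in $V(Q^{T})$ with $w'\in L^{4/3}(0,T;V^{\ast}(\Omega))$, Lemma \ref{L_1.1} (applied componentwise) gives $\tfrac{d}{dt}\|w\|_{2}^{2}=2\langle w',w\rangle$ as a scalar distribution identity on $(0,T)$, and using $\langle\sum u_{j}D_{j}u-\sum v_{j}D_{j}v,w\rangle=\langle\sum w_{j}D_{j}v,w\rangle$ (after the standard cancellation of $\langle\sum u_{j}D_{j}w,w\rangle=0$), we recover (\ref{2.9})--(\ref{2.10}).

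Next I would reduce the trilinear term to the diagonal quadratic form (\ref{2.11}), $F(t,x)=\sum_{j=1}^{3}a_{j}(t,x)\overline{w}_{j}^{2}(t,x)$, obtained by symmetrizing $\partial v_{k}/\partial x_{j}$ and completing the square. This gives the energy identity (\ref{2.12}) valid a.e.\ in $t$, which is the object of analysis.

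The argument then splits cleanly into two cases. \textbf{Case 1:} if $\int_{\Omega}F(t,x)\,dx\geq 0$ for a.e.\ $t$ (the first hypothesis of the theorem, read as the natural sign condition), then (\ref{2.12}) forces $\tfrac{d}{dt}\|w(t)\|_{2}^{2}\leq -2\nu\|\nabla w(t)\|_{2}^{2}\leq 0$, and combined with $w(0)=0$ this yields $w\equiv 0$, so $u=v$. \textbf{Case 2:} if $\int_{\Omega}F(t,x)\,dx<0$, I would bound the trilinear term by H\"older's inequality as in the excerpt, $|\langle D_{j}v_{i}w_{i},w_{j}\rangle|\leq\|D_{j}v_{i}\|_{2}\|w_{i}\|_{4}\|w_{j}\|_{4}$, and then invoke the Gagliardo--Nirenberg--Sobolev inequality (\ref{2.14}) in the specific form $\|w_{j}\|_{4}^{2}\leq c^{2}\|w_{j}\|_{2}^{1/2}\|\nabla w_{j}\|_{2}^{3/2}$. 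After substitution into (\ref{2.12}), the $\nu\|\nabla w_{j}\|_{2}^{2}$ terms and the $c^{2}\|D_{i}v_{j}\|_{2}\|\nabla w_{j}\|_{2}^{3/2}\|w_{j}\|_{2}^{1/2}$ terms can be compared by factoring out $\|\nabla w_{j}\|_{2}^{3/2}\|w_{j}\|_{2}^{1/2}$ and using the Poincar\'e inequality $\|\nabla w_{j}\|_{2}^{2}\geq\lambda_{1}\|w_{j}\|_{2}^{2}$ on $H_{0}^{1}(\Omega)$. Under the standing hypothesis $\nu\lambda_{1}^{1/4}\geq c^{2}\sum_{i}\|D_{i}u_{j}(t)\|_{2}$, the bracketed factor is nonnegative, so again $\tfrac{d}{dt}\|w(t)\|_{2}^{2}\leq 0$, and $w(0)=0$ gives $w\equiv 0$.

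The main obstacle I anticipate is that the diagonalization leading to (\ref{2.11}) requires $D_{1}v_{1}\neq 0$ and $\det\|D_{i}v_{k}\|_{i,k=1}^{2}\neq 0$ on the integration region, which may fail on a set of positive measure; I would handle this by noting that the identity $\sum_{j,k}D_{j}v_{k}w_{k}w_{j}=\sum a_{j}\overline{w}_{j}^{2}$ is a pointwise algebraic fact on the set where diagonalization applies and that the sign of $\int_{\Omega}F\,dx$ used to split the cases is meaningful only as a condition on the integrand, so both hypotheses should be interpreted as conditions on $\sum_{j,k}D_{j}v_{k}w_{k}w_{j}$ rather than on the formal diagonalization. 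A secondary technical point is the applicability of GNS for $p_{0}=4$, $p_{1}=p_{2}=2$, $s=0$, $m=1$, $d=3$ with $\sigma=3/4$, but this falls squarely inside the admissible range of (\ref{2.14}).
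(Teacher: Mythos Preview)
Your proposal is correct and follows essentially the same approach as the paper: the text of Section~\ref{Sec_3} preceding the theorem statement \emph{is} the paper's proof (note the phrase ``Thus is proved'' immediately before the theorem), and you have reproduced that argument---the energy identity (\ref{2.9})--(\ref{2.10}), the diagonalization (\ref{2.11}), the two-case split on the sign of $\int_{\Omega}F\,dx$, and in Case~2 the H\"older/GNS/Poincar\'e chain leading to the bracketed factor $[\nu\lambda_{1}^{1/4}-c^{2}\sum_{i}\|D_{i}v_{j}\|_{2}]$---in the same order and with the same ingredients. Your added remarks on the possible vanishing of the diagonalization denominators and on the admissible GNS parameters are refinements the paper does not make explicit, but they do not change the route.
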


\section{\label{Sec_4}Restriction of problem (1.1$^{1}$) - (1.3)}

From Lemma \ref{L_2.2} it follows that for the investigation of the posed
question it is enough to investigate this problem on the suitable
cross-sections of the domain $Q^{T}\equiv \left( 0,T\right) \times \Omega $.
We introduce the following concept.

\begin{definition}
\label{D_3.1}\textit{A bounded domain }$%
\Omega
\subset R%
{{}^3}%
$\textit{\ with the boundary }$\partial 
\Omega
$\textit{\ is class }$Lip_{loc}$\textit{\ iff }$\partial 
\Omega
$\textit{\ is a locally Lipschitz hypersurface. This means that in a
neighbourhood of any point }$x\in \partial 
\Omega
$\textit{\ , }$\partial 
\Omega
$\textit{\ admits a }representation as a hypersurface $y_{3}=\psi \left(
y_{1},y_{2}\right) $ where $\psi $ is a Lipschitz function, and $\left(
y_{1},y_{2},y_{3}\right) $ are rectangular coordinates in $R%
{{}^3}%
$ in a basis that may be different from the canonical basis $\left(
e_{1},e_{1},e_{3}\right) $.
\end{definition}

According to \cite{Tem1} one can draw the conclusion: It is useful for the
sequel of this section to note that a set $\Omega $ satisfying (1.4) is
"locally star-shaped". This means that each point $x_{j}\in \partial \Omega $%
, has an open neighbourhood $U_{j}$ such that $U_{j}^{\prime }=\Omega \cap
U_{j}$ is star-shaped with respect to one of its points. According to $%
\Omega $\textit{\ }is a locally Lipschitz we may, moreover, suppose that the
boundary $U_{j}^{\prime }$, $j\in J$ is Lipschitz, or $\partial \Omega \in
Lip_{loc}$. If $\partial \Omega $ is bounded, it can be covered by a finite
family of such sets $U_{j}^{\prime }$, $j\in J$. Consequently\textit{\ }for
every cross-section $\Omega _{L}\equiv \Omega \cap L\neq \varnothing $\ of $%
\Omega
$\ with arbitrary hyperplain $L$\ exists, at least, one coordinate subspace (%
$\left( x_{j},x_{k}\right) $)\ which possesses a domain $P_{x_{i}}\Omega
_{L} $\ (or union of domains) whit the Lipschitz class boundary since $%
\partial \Omega _{L}\equiv \partial \Omega \cap L\neq \varnothing $ and
isomorphically defining of $\Omega _{L}$\ with the affine representation,%
\textit{\ }in addition $\partial \Omega _{L}\Longleftrightarrow \partial
P_{x_{i}}\Omega _{L}$.

Thus, with use of the representation $P_{x_{i}}L$ of the hyperplane $L$ we
get that $\Omega _{L}$ can be written in the form $P_{x_{i}}\Omega _{L}$,
therefore an integral on $\Omega _{L}$ also will defined by the respective
representation, i. e. as the integral on $P_{x_{i}}\Omega _{L}$.

It should be noted that $\Omega _{L}$ can consist of many parts then $%
P_{x_{i}}\Omega _{L}$ will be such as $\Omega _{L}$. Consequently in this
case $\Omega _{L}$ will be as the union of domains and the following
relation 
\begin{equation*}
\Omega _{L}=\underset{r=1}{\overset{m}{\cup }}\Omega _{L}^{r}\
\Longleftarrow \Longrightarrow \ P_{x_{i}}\Omega _{L}=\underset{r=1}{\overset%
{m}{\cup }}P_{x_{i}}\Omega _{L}^{r},\quad \infty >m\geq 1,
\end{equation*}%
will holds by virtue of the definition \ref{D_3.1}. Therefore, each of $%
P_{x_{i}}\Omega _{L}^{r}$ will be the domain and one can investigate these
separately, because $\Omega _{L}^{r}\subset \Omega $ and $\partial \Omega
_{L}^{r}\subset \partial \Omega $ takes place.

So, we will define subdomains of $Q^{T}\equiv \left( 0,T\right) \times
\Omega $ as follows $Q_{L}^{T}\equiv \left( 0,T\right) \times \left( \Omega
\cap L\right) $, where $L$ is arbitrary fixed hyperplane of the dimension
two and $\Omega \cap L\neq \varnothing $. Therefore, we will study the
problem on the subdomain defined by use of the cross-section of $\Omega $ by
arbitrary fixed hyperplane dimension two $L$, i.e. by the $co\dim
_{R^{3}}L=1 $ ($\Omega \cap L$, namely on $Q_{L}^{T}\equiv \left( 0,T\right)
\times \left( \Omega \cap L\right) $).

Consequently, we will investigate uniqueness of the problem (1.1%
${{}^1}$%
) - (\ref{3}) on the "cross-section" $Q^{T}$ defined by the cross-section of 
$\Omega $, where $\Omega \subset R^{3}$. This cross-section we understand in
the following sense: Let $L$ be a hyperplane in $R^{3}$, i.e. with $co\dim
_{R^{3}}L=1$, that is equivalent to $R^{2}$. We denote by $\Omega _{L}$ the
cross-section of the form $\Omega _{L}\equiv \Omega \cap L\neq \varnothing $%
, $mes_{R^{2}}\left( \Omega _{L}\right) >0$, in the particular case $L\equiv
\left( x_{1},x_{2},0\right) $. In other words, if $L$ is the hyperplane in $%
R^{3}$ then we can determine it as $L\equiv \left\{ x\in R^{3}\left\vert \
a_{1}x_{1}+a_{2}x_{2}+a_{3}x_{3}=b\right. \right\} $, where coefficients $%
a_{i},b\in R^{1}$ ($i=1,2,3)$ are the arbitrary fixed constants. Whence
follows, that $a_{3}x_{3}=b-a_{1}x_{1}-a_{2}x_{2}$ or $x_{3}=\frac{1}{a_{3}}%
\left( b-a_{1}x_{1}-a_{2}x_{2}\right) $ if we assume $a_{i}\neq 0$ ($i=1,2,3$%
), or if we take substitutions: $\frac{b}{a_{3}}\Longrightarrow b,\frac{a_{1}%
}{a_{3}}\Longrightarrow a_{1}$ and $\frac{a_{2}}{a_{3}}\Longrightarrow a_{2}$
we derive $x_{3}\equiv \psi _{3}\left( x_{1},x_{2}\right)
=b-a_{1}x_{1}-a_{2}x_{2}$ in the new coefficients.

Thus, we have 
\begin{equation}
D_{3}\equiv \frac{\partial x_{1}}{\partial x_{3}}D_{1}+\frac{\partial x_{2}}{\partial x_{3}}D_{2}=-a_{1}^{-1}D_{1}-a_{2}^{-1}D_{2}\quad \&
\label{3.1}
\end{equation}%
\begin{equation}
D_{3}^{2}=a_{1}^{-2}D_{1}^{2}+a_{2}^{-2}D_{2}^{2}+2a_{1}^{-1}a_{2}^{-1}D_{1}D_{2},\quad D_{i}=\frac{\partial }{\partial x_{i}},i=1,2,3.
\label{3.2}
\end{equation}

For the application of our approach we need to assume that functions $u_{0}$
and $f$\ posseses some smoothness.

Now we take account the following conditions of Theorem \ref{Th_1} $u_{0}\in 
\mathcal{H}^{1/2}\left( \Omega \right) $, $f\in L^{2}\left( 0,T;\mathcal{H}%
^{1/2}\left( \Omega \right) \right) $ hold, consequently their restrictions
on $\left[ 0,T\right) \times \Omega _{L}$ are defined.

Let $L$ be arbitrary hyperplane such that $\Omega _{L}\neq \varnothing $ and 
$u\in \mathcal{V}\left( Q^{T}\right) $ be a solution of the problem (1.1%
${{}^1}$%
) - (\ref{3}). It is need to note the restriction of problem (1.1%
${{}^1}$%
) - (\ref{3}) to $\left[ 0,T\right) \times \Omega _{L}$ mean the restriction
on $\left[ 0,T\right) \times \Omega _{L}$ of a solution $u$ that is defined
on $\left[ 0,T\right) \times \Omega $. As function $u$ belong to $\mathcal{V}%
\left( Q^{T}\right) $ therefore the restriction $u$ on $\left[ 0,T\right)
\times \Omega _{L}$ is well defined. Then by making the restriction we
obtain the following problem on $\left[ 0,T\right) \times \Omega _{L}$, by
virtue of the above conditions of the main theorem, here $T>0$ some number, 
\begin{equation*}
\frac{\partial u}{\partial t}-\nu \Delta u+\underset{j=1}{\overset{3}{\sum }}%
u_{j}D_{j}u=\frac{\partial u_{L}}{\partial t}-\nu \left(
D_{1}^{2}+D_{2}^{2}+D_{3}^{2}\right) u_{L}+
\end{equation*}%
\begin{equation*}
u_{L1}D_{1}u_{L}+u_{L2}D_{2}u_{L}+u_{L3}D_{3}u_{L}=\frac{\partial u_{L}}{%
\partial t}-\nu \left[ D_{1}^{2}+D_{2}^{2}+a_{1}^{-2}D_{1}^{2}\right. +
\end{equation*}%
\begin{equation*}
\left. a_{2}^{-2}D_{2}^{2}+2a_{1}^{-1}a_{2}^{-1}D_{1}D_{2}\right]
u_{L}+u_{L1}D_{1}u_{L}+u_{L2}D_{2}u_{L}-u_{L3}a_{1}^{-1}D_{1}u_{L}-
\end{equation*}%
\begin{equation*}
u_{L3}a_{2}^{-1}D_{2}u_{L}=\frac{\partial u_{L}}{\partial t}-\nu \left[
\left( 1+a_{1}^{-2}\right) D_{1}^{2}+\left( 1+a_{2}^{-2}\right) D_{2}^{2}%
\right] u_{L}-
\end{equation*}%
\begin{equation}
2\nu a_{1}^{-1}a_{2}^{-1}D_{1}D_{2}u_{L}+\left(
u_{L1}-a_{1}^{-1}u_{L3}\right) D_{1}u_{L}+\left(
u_{L2}-a_{2}^{-1}u_{L3}\right) D_{2}u_{L}=f_{L}  \label{3.3}
\end{equation}%
on $\left( 0,T\right) \times \Omega _{L}$, by virtue of (\ref{3.1}) and (\ref%
{3.2}). We get 
\begin{equation}
\func{div}u_{L}=D_{1}\left( u_{L}-a_{1}^{-1}u_{L3}\right) +D_{2}\left( u_{L}-a_{2}^{-1}u_{L3}\right) =0,\quad x\in \Omega _{L},\ t>0
\label{3.4}
\end{equation}%
\begin{equation}
u_{L}\left( 0,x\right) =u_{L0}\left( x\right) ,\quad \left( t,x\right) \in \left[ 0,T\right] \times \Omega _{L};\quad u_{L}\left\vert \ _{\left( 0,T\right) \times \partial \Omega _{L}}\right. =0.
\label{3.5}
\end{equation}%
by using of same way.

Consequently, we restricted the problem (1.1%
${{}^1}$%
) - (\ref{3}) to problem (\ref{3.3}) - (\ref{3.5}) the study of which give
we possibility to define properties of solutions $u$ of problem (1.1%
${{}^1}$%
) - (\ref{3}) on each cross-section $\left[ 0,T\right) \times \Omega
_{L}\equiv Q_{L}^{T}$.

In the beginning it is necessary to investigate the existence of the
solution of problem (\ref{3.3}) - (\ref{3.5}) and determine the space where
the existing solutions are contained. Consequently, for ending the proof of
the uniqueness theorem for main problem it is enough to prove the existence
theorem and the uniqueness theorem for the derived problem (\ref{3.3}) - (%
\ref{3.5}), in this case. So now we will investigate of problem (\ref{3.3})
- (\ref{3.5}).

We would like to note: Let $L\subset R^{3}$ is the hyperplane for which $%
\Omega \cap L\neq \varnothing $ then there is, at least, one 2-dimensional
subspace in the given coordinat system that one can determine as $\left(
x_{i},x_{j}\right) $, consequently, $P_{x_{k}}L=R^{2}$, ($i,j,k=1,2,3$) i.e. 
\begin{equation*}
L\equiv \left\{ x\in R^{3}\left\vert \ x=\left( x_{i},x_{j},\psi _{L}\left(
x_{i},x_{j}\right) \right) ,\right. \left( x_{i},x_{j}\right) \in
R^{2}\right\}
\end{equation*}%
and 
\begin{equation*}
\Omega \cap L\equiv \left\{ x\in \Omega \left\vert \ x=\left(
x_{i},x_{j},\psi _{L}\left( x_{i},x_{j}\right) \right) ,\right. \left(
x_{i},x_{j}\right) \in P_{x_{k}}\left( \Omega \cap L\right) \right\}
\end{equation*}%
hold, where $\psi _{L}$ is the affine function such as the mentioned above
function and it is the bijection.

Thereby, in this case by applying of the mentioned restriction to functions 
\begin{equation*}
u(t,x_{1},x_{2},x_{3}),\ f(t,x_{1},x_{2},x_{3}),\ u_{0}(x_{1},x_{2},x_{3})
\end{equation*}%
we obtain the following representations 
\begin{equation*}
u(t,x_{i},x_{j},\psi _{L}(x_{i},x_{j}))\equiv v(t,x_{i},x_{j})\text{, }%
f(t,x_{i},x_{j},\psi _{L}(x_{i},x_{j})\equiv \phi (x_{i},x_{j})
\end{equation*}%
and%
\begin{equation*}
u_{0}(x_{i},x_{j},\psi _{L}(x_{i},x_{j}))\equiv v_{0}(x_{i},x_{j})\text{ \ \
on }(0,T)\times P_{x_{k}}\Omega _{L},
\end{equation*}%
respectively.

So, each of the functions obtained by the previous transformation depends
only on the indepandent variables: $t$, $x_{i}$ and $x_{j}$.

\subsection{\label{SS_4.1}\textbf{On Dirichlet to Neumann map}}

As is known (\cite{Nac}, \cite{BehEl}, \cite{DePrZac}, \cite{H-DR}, \cite%
{BelCho} etc.) the Dirichlet to Neumann map is single-value maping if the
homogeneous Dirichlet problem for elliptic equation has only trivial
solution, i. e. zero not is eigenvalue of this problem. Consequently, we
need to show that the homogeneous Dirichlet problem for elliptic equation
appropriate to considered problem satisfies of this property. So, we will
show that the obtained here problem satisfies of the corresponding condition
of results of such type from mentioned articles.

\begin{proposition}
\label{Pr_4.1}The homogeneous Dirichlet. problem for elliptic part of
problem (3.3) - (3.5) has only trivial solution.
\end{proposition}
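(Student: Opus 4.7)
The plan is to verify that the constant-coefficient second-order operator
\begin{equation*}
\mathcal{L} := -\nu\bigl[(1+a_1^{-2})D_1^2 + 2a_1^{-1}a_2^{-1}D_1 D_2 + (1+a_2^{-2})D_2^2\bigr]
\end{equation*}
extracted from the principal part of (\ref{3.3}) is uniformly elliptic on $\Omega_L$, and then to run the standard coercivity argument on $H_0^1(\Omega_L)$. Since $\mathcal{L}$ acts componentwise on the vector $u_L=(u_{L1},u_{L2},u_{L3})$, it suffices to treat the scalar homogeneous Dirichlet problem $\mathcal{L}w=0$ in $\Omega_L$, $w|_{\partial\Omega_L}=0$.

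First I would exhibit the symbol matrix
\begin{equation*}
M=\begin{pmatrix} 1+a_1^{-2} & a_1^{-1}a_2^{-1} \\ a_1^{-1}a_2^{-1} & 1+a_2^{-2} \end{pmatrix}
\end{equation*}
and compute $\operatorname{tr} M = 2+a_1^{-2}+a_2^{-2}$ together with $\det M = (1+a_1^{-2})(1+a_2^{-2}) - a_1^{-2}a_2^{-2} = 1+a_1^{-2}+a_2^{-2}$. Both quantities are strictly positive under the standing assumption $a_i\neq 0$ used to derive (\ref{3.1})--(\ref{3.2}). Hence $M$ is symmetric and positive definite with some smallest eigenvalue $\lambda_{\min}(M)>0$, so $\mathcal{L}$ is uniformly elliptic.

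The energy step is then immediate: multiplying $\mathcal{L}w=0$ by $w\in H_0^1(\Omega_L)$ and integrating by parts eliminates the boundary term by the Dirichlet condition and yields
\begin{equation*}
\nu\int_{\Omega_L}(M\nabla w)\cdot\nabla w\,dx = 0.
\end{equation*}
Applying the lower bound $(M\xi)\cdot\xi \geq \lambda_{\min}(M)|\xi|^2$ forces $\nabla w = 0$ a.e., so $w$ is constant on each connected component of $\Omega_L$. Coupled with $w|_{\partial\Omega_L}=0$ this gives $w\equiv 0$, as required.

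The only point requiring care is the case when $\Omega_L$ decomposes as $\Omega_L=\bigcup_{r=1}^m\Omega_L^r$, which is allowed by the $Lip_{loc}$ hypothesis of Definition~\ref{D_3.1}. In that situation one applies the energy identity on each Lipschitz piece $\Omega_L^r$ separately, and since $\partial\Omega_L^r\subseteq\partial\Omega_L$ the homogeneous Dirichlet data pins the locally constant function to zero on every component. I do not foresee a genuine obstacle here; the proposition reduces to a direct coercivity check in coordinates already arranged for the purpose in Section~\ref{Sec_4}.
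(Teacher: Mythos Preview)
Your argument treats only the principal linear operator $\mathcal{L}$, but the paper's ``elliptic part'' of (3.3)--(3.5) is the full stationary operator $-\nu\Delta u_L + B(u_L)$, which retains the nonlinear convection term
\[
B(u_L)=(u_{L1}-a_1^{-1}u_{L3})\,D_1 u_L+(u_{L2}-a_2^{-1}u_{L3})\,D_2 u_L.
\]
Because $B$ couples the three components of $u_L$, your reduction ``it suffices to treat the scalar homogeneous Dirichlet problem $\mathcal{L}w=0$'' is not justified as written: you have silently discarded $B$. The fix is short but essential: test the full stationary equation against $u_L\in V(\Omega_L)$, integrate the convection term by parts, and invoke the restricted divergence-free condition (\ref{3.4}) to obtain $\langle B(u_L),u_L\rangle_{\Omega_L}=0$, exactly as the paper does in its proof. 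Once that term is eliminated, your coercivity argument finishes the job.

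On the linear second-order part your approach is correct and equivalent to the paper's, though the mechanisms differ. You verify positive-definiteness of the symbol matrix $M$ via $\operatorname{tr}M>0$ and $\det M=1+a_1^{-2}+a_2^{-2}>0$. The paper instead observes directly that the quadratic form is the sum of squares
\[
(D_1 u_{Li})^2+(D_2 u_{Li})^2+\bigl[(a_1^{-1}D_1+a_2^{-1}D_2)u_{Li}\bigr]^2,
\]
which makes coercivity immediate without any eigenvalue computation and, incidentally, exhibits $M=I+vv^{\top}$ with $v=(a_1^{-1},a_2^{-1})$. Either route suffices; the paper's is a little cleaner, yours a little more systematic.
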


\begin{proof}
If consider the elliptic part of problem (3.3) - (3.5) then we get\ the
problem 
\begin{equation*}
-\Delta u_{L}+Bu_{L}\equiv -\nu \left[ \left( 1+a_{1}^{-2}\right)
D_{1}^{2}+\left( 1+a_{2}^{-2}\right)
D_{2}^{2}+2a_{1}^{-1}a_{2}^{-1}D_{1}D_{2}\right] u_{Li}+
\end{equation*}%
\begin{equation*}
\left( u_{L1}-a_{1}^{-1}u_{L3}\right) D_{1}u_{L}+\left(
u_{L2}-a_{2}^{-1}u_{L3}\right) D_{2}u_{L}=0,\ x\in \Omega _{L},\quad
u_{L}\left\vert _{\ \partial \Omega _{L}}\right. =0,
\end{equation*}%
where $\Omega _{L}=\Omega \cap L$.

We assume this problem has a nontrivial solution and we will show that it is
unpossible. Let $u_{L}\in V\left( \Omega _{L}\right) $ be nontrivial
solution of this problem then we get the following equation 
\begin{equation*}
0=\left\langle -\Delta u_{L}+Bu_{L},u_{L}\right\rangle _{P_{x_{3}}\Omega
_{L}}
\end{equation*}%
hence 
\begin{equation*}
=-\underset{i=1}{\overset{3}{\nu \sum }}\left\langle \left[ \left(
D_{1}^{2}+D_{2}^{2}\right) +\left( a_{1}^{-1}D_{1}+a_{2}^{-1}D_{2}\right)
^{2}\right] u_{Li},u_{Li}\right\rangle _{P_{x_{3}}\Omega _{L}}+
\end{equation*}%
\begin{equation*}
\underset{i=1}{\overset{3}{\sum }}\underset{P_{x_{3}}\Omega _{L}}{\int }%
\left[ u_{L1}D_{1}u_{Li}u_{Li}+u_{L2}D_{2}u_{Li}u_{Li}+\right.
\end{equation*}%
\begin{equation*}
\left. u_{L3}\left( -a_{1}^{-1}D_{1}-a_{2}^{-1}D_{2}\right) u_{Li}u_{Li} 
\right] dx_{1}dx_{2}=
\end{equation*}%
\begin{equation*}
\underset{i=1}{\overset{3}{\nu \sum }}\underset{P_{x_{3}}\Omega _{L}}{\int }%
\left\{ \left( D_{1}u_{Li}\right) ^{2}+\left( D_{2}u_{Li}\right) ^{2}+\left[
\left( a_{1}^{-1}D_{1}+a_{2}^{-1}D_{2}\right) u_{Li}\right] ^{2}\right\}
dx_{1}dx_{2}+
\end{equation*}%
\begin{equation*}
\frac{1}{2}\underset{i=1}{\overset{3}{\sum }}\underset{P_{x_{3}}\Omega _{L}}{%
\int }\left[ u_{L1}D_{1}\left( u_{Li}\right) ^{2}+u_{L2}D_{2}\left(
u_{Li}\right) ^{2}+\right.
\end{equation*}%
\begin{equation*}
\left. u_{L3}\left( -a_{1}^{-1}D_{1}-a_{2}^{-1}D_{2}\right) \left(
u_{Li}\right) ^{2}\right] dx_{1}dx_{2}\geq
\end{equation*}%
\begin{equation*}
\underset{i=1}{\overset{3}{\nu \sum }}\underset{P_{x_{3}}\Omega _{L}}{\int }%
\left[ \left\vert D_{1}u_{Li}\right\vert ^{2}+\left\vert
D_{2}u_{Li}\right\vert ^{2}\right] dx_{1}dx_{2}+
\end{equation*}%
\begin{equation*}
-\frac{1}{2}\underset{i=1}{\overset{3}{\sum }}\underset{P_{x_{3}}\Omega _{L}}%
{\int }\left[ D_{1}u_{L1}+D_{2}u_{L2}+\left(
-a_{1}^{-1}D_{1}-a_{2}^{-1}D_{2}\right) u_{L3}\right] \left\vert
u_{Li}\right\vert ^{2}dx_{1}dx_{2}=
\end{equation*}%
by (\ref{3.4}) 
\begin{equation*}
\underset{i=1}{\overset{3}{\nu \sum }}\underset{P_{x_{3}}\Omega _{L}}{\int }%
\left[ \left\vert D_{1}u_{Li}\right\vert ^{2}+\left\vert
D_{2}u_{Li}\right\vert ^{2}\right] dx_{1}dx_{2}-\frac{1}{2}\underset{i=1}{%
\overset{3}{\sum }}\underset{P_{x_{3}}\Omega _{L}}{\int }\left\vert
u_{Li}\right\vert ^{2}\func{div}u_{L}dx_{1}dx_{2}=
\end{equation*}%
\begin{equation*}
\underset{i=1}{\overset{3}{\nu \sum }}\underset{P_{x_{3}}\Omega _{L}}{\int }%
\left[ \left\vert D_{1}u_{Li}\right\vert ^{2}+\left\vert
D_{2}u_{Li}\right\vert ^{2}\right] dx_{1}dx_{2}>0.
\end{equation*}%
Thus the obtained contradiction shows that function $u_{L}$ need be zero,
i.e. $u_{L}=0$ holds.

Consequently, the Dirichlet to Neumann map is single-value operator.
\end{proof}

It is well-known that operator $-\Delta :H_{0}^{1}\left( \Omega _{L}\right)
\longrightarrow $ $H^{-1}\left( \Omega _{L}\right) $ generates of the $C_{0}$
semigroup on $H\left( \Omega _{L}\right) $ and since the inclusion $%
H_{0}^{1}\left( \Omega _{L}\right) \subset H^{-1}\left( \Omega _{L}\right) $
is compact therefore $\left( -\Delta \right) ^{-1}$ is the compact operator
in $H^{-1}\left( \Omega _{L}\right) $. Moreover, $-\Delta :H^{1/2}\left(
\partial \Omega _{L}\right) \longrightarrow H^{-1/2}\left( \partial \Omega
_{L}\right) $ and the operator $B:$ $H^{1/2}\left( \partial \Omega
_{L}\right) \longrightarrow H^{-1/2}\left( \partial \Omega _{L}\right) $
also possess appropriate properties of such types.

\section{\label{Sec_5}Existence of Solution of Problem (3.3) - (3.5)}

So, assume conditons of Theorem \ref{Th_1} fulfilled, i. e. 
\begin{equation*}
u_{0}\in \mathcal{H}^{1/2}\left( \Omega \right) ,\quad f\in L^{2}\left( 0,T;%
\mathcal{H}^{1/2}\left( \Omega \right) \right) ,
\end{equation*}%
then restrictions of these functions on $\Omega _{L}$, $Q_{L}^{T}$,
respectively, are correctly defined and belong in $H\left( \Omega
_{L}\right) $, $L^{2}\left( 0,T;V^{\ast }\left( \Omega _{L}\right) \right) $%
, respectively. Consequently, it is enough to study the restricted problem
under conditions $u_{0L}\in H\left( \Omega _{L}\right) $ and $f_{L}\in
L^{2}\left( 0,T;V^{\ast }\left( \Omega _{L}\right) \right) $, as independent
problem.

To carry out the known argument started by Leray (\cite{Ler1}, see, also 
\cite{Lio1}, \cite{Tem1}) we can determine the following spaces 
\begin{equation*}
V\left( \Omega _{L}\right) =\left\{ v\left\vert \ v\in \right. \left(
W_{0}^{1,2}\left( \Omega _{L}\right) \right) ^{3}\equiv \left(
H_{0}^{1}\left( \Omega _{L}\right) \right) ^{3},\quad \func{div}v=0\right\} ,
\end{equation*}%
where $\func{div}$ is regarded in the sense (\ref{3.4}) and 
\begin{equation*}
V\left( Q_{L}^{T}\right) \equiv L^{2}\left( 0,T;V\left( \Omega _{L}\right)
\right) \cap L^{\infty }\left( 0,T;\left( H\left( \Omega _{L}\right) \right)
^{3}\right) .
\end{equation*}

More exactly we will adduce definitions of these spaces such way as in
Definition \ref{D_2.1} , i. e. $V\left( \Omega _{L}\right) $ is the closure
in $\left( H_{0}^{1}\left( \Omega _{L}\right) \right) ^{3}$\ of 
\begin{equation*}
\left\{ \varphi \left\vert \ \varphi \in \left( C_{0}^{\infty }\left( \Omega
_{L}\right) \right) ^{3},\right. \func{div}\varphi =0\right\}
\end{equation*}%
\ the dual $V\left( \Omega _{L}\right) $ is determined as $V^{\ast }\left(
\Omega _{L}\right) $ and $\left( H\left( \Omega _{L}\right) \right) ^{3}$ is
the closure in $\left( L^{2}\left( \Omega _{L}\right) \right) ^{3}$ of 
\begin{equation*}
\left\{ \varphi \left\vert \ \varphi \in \left( C_{0}^{\infty }\left( \Omega
_{L}\right) \right) ^{3},\right. \func{div}\varphi =0\right\} .
\end{equation*}

Here $\Omega \subset R^{3}$ is bounded domain of $Lip_{loc}$ and $\Omega
_{L}\subset R^{2}$ is subdomain defined in Section \ref{Sec_4} therefore, $%
\Omega _{L}$ is Lipschitz, $Q_{L}^{T}\equiv \left( 0,T\right) \times \Omega
_{L}$.

Consequently, a solution of this problem will be understood as follows: Let $%
f_{L}\in L^{2}\left( 0,T;V^{\ast }\left( \Omega _{L}\right) \right) $ and $%
u_{0L}\in \left( H\left( \Omega _{L}\right) \right) ^{3}$.

So, we can call the solution of this problem: a function $u_{L}\in \mathcal{V%
}\left( Q_{L}^{T}\right) $ is called a solution of the problem (\ref{3.3}) -
(\ref{3.5}) if $u_{L}(t,x^{\prime })$ satisfy the equation and initial
condition 
\begin{equation}
\frac{d}{dt}\left\langle u_{L},v\right\rangle _{\Omega _{L}}-\left\langle
\nu \Delta u_{L},v\right\rangle _{\Omega _{L}}+\left\langle \underset{j=1}{%
\overset{3}{\sum }}u_{Lj}D_{j}u_{L},v\right\rangle _{\Omega
_{L}}=\left\langle f_{L},v\right\rangle _{\Omega _{L}},  \label{3.6}
\end{equation}%
\begin{equation*}
\left\langle u_{L}\left( t\right) ,v\right\rangle \left\vert _{t=0}\right.
=\left\langle u_{0L},v\right\rangle ,
\end{equation*}%
for any $v\in V\left( \Omega _{L}\right) $ a. e. on $\left( 0,T\right) $ in
the sense of $H$, here $\left\langle \circ ,\circ \right\rangle _{\Omega
_{L}}$ is the dual form for the pair of spaces $\left( V\left( \Omega
_{L}\right) ,V^{\ast }\left( \Omega _{L}\right) \right) $ and $\Omega _{L}$
is Lipschitz, where $x^{\prime }\equiv \left( x_{1},x_{2}\right) $ and 
\begin{equation*}
\mathcal{V}\left( Q_{L}^{T}\right) \equiv \left\{ w\left\vert \ w\in V\left(
Q_{L}^{T}\right) ,\ w^{\prime }\in L^{2}\left( 0,T;V^{\ast }\left( \Omega
_{L}\right) \right) \right. \right\} .
\end{equation*}

We will lead of the proof of this problem in five-steps as indepandent
problem.

\subsection{\label{SS_5.1}\textbf{A priori estamations}}

For this we assume in (\ref{3.6}) $u_{L}$ instead of $v$ then we get 
\begin{equation}
\frac{d}{dt}\left\langle u_{L},u_{L}\right\rangle _{\Omega
_{L}}-\left\langle \nu \Delta u_{L},u_{L}\right\rangle _{\Omega
_{L}}+\left\langle \underset{j=1}{\overset{3}{\sum }}u_{Lj}D_{j}u_{L},u_{L}%
\right\rangle _{\Omega _{L}}=\left\langle f_{L},u_{L}\right\rangle _{\Omega
_{L}}.  \label{3.6'}
\end{equation}%
Thence, by making the known calculations and taking into account of the
condition on $\Omega _{L}$ and of calculations (\ref{3.1}) and next (\ref%
{3.4}) that carried out in previous Section, we obtain

\begin{equation*}
\frac{1}{2}\frac{d}{dt}\left\Vert u_{L}\right\Vert _{\left( H\left( \Omega
_{L}\right) \right) ^{3}}^{2}\left( t\right) +\nu \left( 1+a_{1}^{-2}\right)
\left\Vert D_{1}u_{L}\right\Vert _{\left( H\left( \Omega _{L}\right) \right)
^{3}}^{2}\left( t\right) +
\end{equation*}%
\begin{equation}
\nu \left( 1+a_{2}^{-2}\right) \left\Vert D_{2}u_{L}\right\Vert _{\left(
H\left( \Omega _{L}\right) \right) ^{3}}^{2}\left( t\right) +2\nu
a_{1}^{-1}a_{2}^{-1}\left\langle D_{1}u_{L},D_{2}u_{L}\right\rangle _{\Omega
_{L}}\left( t\right) =\left\langle f_{L},u_{L}\right\rangle _{\Omega _{L}},
\label{3.7}
\end{equation}%
where $\left\langle g,h\right\rangle _{\Omega _{L}}=\underset{i=1}{\overset{3%
}{\sum }}\underset{P_{x_{3}}\Omega _{L}}{\int }g_{i}h_{i}dx_{1}dx_{2}$ for
any $g,h\in \left( H\left( \Omega _{L}\right) \right) ^{3}$, or $g\in \left(
H^{1}\left( \Omega _{L}\right) \right) ^{3}$ and $h\in \left( H^{-1}\left(
\Omega _{L}\right) \right) ^{3}$, respectively. We will show the correctness
of (\ref{3.7}), and to this end we shall prove the correctness of each term
of this sum, separately.

So, using (\ref{3.3}) we get 
\begin{equation*}
-\nu \left\langle \Delta u_{L}\left( t\right) ,u_{L}\left( t\right)
\right\rangle _{\Omega _{L}}=
\end{equation*}%
\begin{equation*}
-\underset{i=1}{\overset{3}{\nu \sum }}\left\langle \left[ \left(
1+a_{1}^{-2}\right) D_{1}^{2}+\left( 1+a_{2}^{-2}\right)
D_{2}^{2}+2a_{1}^{-1}a_{2}^{-1}D_{1}D_{2}\right] u_{Li},u_{Li}\right\rangle
_{P_{x_{3}}\Omega _{L}}=
\end{equation*}%
\begin{equation*}
\underset{i=1}{\overset{3}{\nu \sum }}\underset{P_{x_{3}}\Omega _{L}}{\int }%
\left[ \left( 1+a_{1}^{-2}\right) \left( D_{1}u_{Li}\right) ^{2}+\left(
1+a_{2}^{-2}\right) \left( D_{2}u_{Li}\right) ^{2}+\right.
\end{equation*}%
\begin{equation*}
\left. 2a_{1}^{-1}a_{2}^{-1}D_{1}u_{Li}D_{2}u_{Li}\right] dx_{1}dx_{2}\geq
\end{equation*}%
thus is obtained the sum reducible in (\ref{3.7}); if we estimate of the
last adding in the above mentioned sum then we get 
\begin{equation}
\nu \left[ \left\Vert D_{1}u_{L}\right\Vert _{\left( H\left( \Omega
_{L}\right) \right) ^{3}}^{2}\left( t\right) +\left\Vert
D_{2}u_{L}\right\Vert _{\left( H\left( \Omega _{L}\right) \right)
^{3}}^{2}\left( t\right) \right] .  \label{3.8}
\end{equation}

Now consider the trilinear form from (\ref{3.6'}) 
\begin{equation*}
\left\langle \underset{j=1}{\overset{3}{\sum }}u_{Lj}D_{j}u_{L},u_{L}\right%
\rangle _{\Omega _{L}}=
\end{equation*}%
due to (\ref{3.3}) we get 
\begin{equation*}
\underset{i=1}{\overset{3}{\sum }}\underset{P_{x_{3}}\Omega _{L}}{\int }%
\left[ u_{L1}D_{1}u_{Li}u_{Li}+u_{L2}D_{2}u_{Li}u_{Li}+\right.
\end{equation*}%
\begin{equation*}
\left. u_{L3}\left( -a_{1}^{-1}D_{1}-a_{2}^{-1}D_{2}\right) u_{Li}u_{Li} 
\right] dx_{1}dx_{2}=
\end{equation*}%
\begin{equation*}
\frac{1}{2}\underset{i=1}{\overset{3}{\sum }}\underset{P_{x_{3}}\Omega _{L}}{%
\int }\left[ u_{L1}D_{1}\left( u_{Li}\right) ^{2}+u_{L2}D_{2}\left(
u_{Li}\right) ^{2}+\right.
\end{equation*}%
\begin{equation*}
\left. u_{L3}\left( -a_{1}^{-1}D_{1}-a_{2}^{-1}D_{2}\right) \left(
u_{Li}\right) ^{2}\right] dx_{1}dx_{2}=
\end{equation*}%
\begin{equation*}
-\frac{1}{2}\underset{i=1}{\overset{3}{\sum }}\underset{P_{x_{3}}\Omega _{L}}%
{\int }\left[ D_{1}u_{L1}+D_{2}u_{L2}+\left(
-a_{1}^{-1}D_{1}-a_{2}^{-1}D_{2}\right) u_{L3}\right] \left( u_{Li}\right)
^{2}dx_{1}dx_{2}=
\end{equation*}%
hence by (\ref{3.4}) 
\begin{equation}
-\frac{1}{2}\underset{i=1}{\overset{3}{\sum }}\underset{P_{x_{3}}\Omega _{L}}%
{\int }\left( u_{Li}\right) ^{2}\func{div}u_{L}dx_{1}dx_{2}=0.  \label{3.9}
\end{equation}

Consequently, the correctness of equation (\ref{3.7}) follows from (\ref{3.8}%
)-(\ref{3.9}), that give we the following inequation 
\begin{equation*}
\frac{1}{2}\frac{d}{dt}\left\Vert u_{L}\right\Vert _{\left( H\left( \Omega
_{L}\right) \right) ^{3}}^{2}\left( t\right) +
\end{equation*}%
\begin{equation}
\nu \underset{i=1}{\overset{3}{\sum }}\underset{P_{x_{3}}\Omega _{L}}{\int }%
\left[ \left( D_{1}u_{Li}\right) ^{2}+\left( D_{2}u_{Li}\right) ^{2}\right]
dx_{1}dx_{2}\leq \underset{P_{x_{3}}\Omega _{L}}{\int }\left\vert \left(
f_{L}\cdot u_{L}\right) \right\vert dx_{1}dx_{2}  \label{3.10}
\end{equation}%
Namely, from here we obtain the following a priori estimations 
\begin{equation}
\left\Vert u_{L}\right\Vert _{\left( H\left( \Omega _{L}\right) \right)
^{3}}\left( t\right) \leq C\left( f_{L},u_{L0},mes\Omega \right) ,
\label{3.11}
\end{equation}%
\begin{equation}
\left\Vert D_{1}u_{L}\right\Vert _{\left( H\left( \Omega _{L}\right) \right)
^{3}}+\left\Vert D_{2}u_{L}\right\Vert _{\left( H\left( \Omega _{L}\right)
\right) ^{3}}\leq C\left( f_{L},u_{L0},mes\Omega \right) ,  \label{3.12}
\end{equation}%
where $C\left( f_{L},u_{L0},mes\Omega \right) >0$ is the constant that is
independent of $u_{L}$. Consequently, any possible solution of this problem
belong to a bounded subset of the space $V\left( Q_{L}^{T}\right) $.

So, if we will obtain the estimation for $u_{L}^{\prime }$ as well then we
will have of the necessary a priori estimations, which are sufficient for
the proof of the existence theorem.\footnote{%
It should be noted that if the represantation of $%
\Omega
_{L}$ by coordinate system $(x_{1},x_{2})$ not is best for the definition of
the appropriate integral, then we will select other coordinate system:
either $(x_{1},x_{3})$ or $(x_{2},x_{3})$ instead of $(x_{1},x_{2})$ that is
best for our goal, that must exist by virtue of the definition of $%
\Omega
$.}

\subsection{\label{SS_5.2}Boundedness of the trilinear form}

Now we must study the trilinear form of (\ref{3.6}) that one can also call
as $b_{L}\left( u_{L},u_{L},v\right) $.

\begin{proposition}
\label{P_3.1}Let $u_{L}\in V\left( Q_{L}^{T}\right) $, $v\in V\left( \Omega
_{L}\right) $ and $B$ is the operator defined by 
\begin{equation*}
\left\langle B\left( u_{L}\right) ,v\right\rangle _{\Omega _{L}}=b_{L}\left(
u_{L},u_{L},v\right) =\left\langle \underset{j=1}{\overset{3}{\sum }}%
u_{Lj}D_{j}u_{L},v\right\rangle _{\Omega _{L}}
\end{equation*}%
then $B\left( u_{L}\right) $ belong to $L^{2}\left( 0,T;V^{\ast }\left(
\Omega _{L}\right) \right) $.
\end{proposition}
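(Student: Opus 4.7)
The plan is to exploit the essentially two-dimensional character of $\Omega_L$: since $\Omega_L$ lies in a hyperplane with $co\dim_{R^3}L=1$, after the coordinate representation introduced in Section \ref{Sec_4} every integral over $\Omega_L$ reduces to an integral over the planar set $P_{x_3}\Omega_L\subset R^2$, and consequently a 2D Ladyzhenskaya-type inequality is available. The target pointwise-in-time estimate is
\[
\|B(u_L)(t)\|_{V^{\ast}(\Omega_L)}\leq C\,\|u_L(t)\|_{H(\Omega_L)}\,\|\nabla u_L(t)\|_{L^2(\Omega_L)},
\]
which, once established, immediately yields $B(u_L)\in L^2(0,T;V^{\ast}(\Omega_L))$, because the hypothesis $u_L\in V(Q_L^T)$ together with the a priori estimates (\ref{3.11})--(\ref{3.12}) places $u_L$ in $L^\infty(0,T;H(\Omega_L))\cap L^2(0,T;V(\Omega_L))$.

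To obtain the pointwise bound I would first collapse the sum in $\langle B(u_L),v\rangle_{\Omega_L}$ via the substitution (\ref{3.1}). Writing $\tilde u_1\equiv u_{L1}-a_1^{-1}u_{L3}$ and $\tilde u_2\equiv u_{L2}-a_2^{-1}u_{L3}$, the trilinear form becomes
\[
\langle B(u_L),v\rangle_{\Omega_L}=\int_{P_{x_3}\Omega_L}\bigl(\tilde u_1\,D_1 u_L+\tilde u_2\,D_2 u_L\bigr)\cdot v\,dx_1\,dx_2,
\]
where the inner dot is the $R^3$ scalar product acting on the 3-vectors $u_L$ and $v$. The decisive observation is that the restricted divergence condition (\ref{3.4}) reads precisely $D_1\tilde u_1+D_2\tilde u_2=0$, so that the effective planar drift $\tilde u=(\tilde u_1,\tilde u_2)$ is divergence-free on $P_{x_3}\Omega_L$. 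Integration by parts in the planar variables, combined with $v|_{\partial\Omega_L}=0$ for $v\in V(\Omega_L)$, then produces the antisymmetry identity
\[
\langle B(u_L),v\rangle_{\Omega_L}=-\int_{P_{x_3}\Omega_L} u_L\cdot\bigl(\tilde u_1\,D_1 v+\tilde u_2\,D_2 v\bigr)\,dx_1\,dx_2,
\]
in which no derivative falls on $u_L$.

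From here the argument is routine. H\"older's inequality applied componentwise, together with the pointwise bound $|\tilde u|\leq C(a_1,a_2)|u_L|$, gives $|\langle B(u_L),v\rangle_{\Omega_L}|\leq C\|u_L\|_{L^4(\Omega_L)}^2\|\nabla v\|_{L^2(\Omega_L)}$, and the 2D Ladyzhenskaya inequality $\|w\|_{L^4(\Omega_L)}^2\leq C\|w\|_{L^2(\Omega_L)}\|\nabla w\|_{L^2(\Omega_L)}$---the special case of the Gagliardo--Nirenberg--Sobolev inequality (\ref{2.14}) with $d=2$, $p_0=4$, $p_1=p_2=2$, $m=1$, $s=0$, $\sigma=1/2$---delivers the target bound on $\|B(u_L)\|_{V^{\ast}}$. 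Squaring and integrating over $(0,T)$ yields $\int_0^T\|B(u_L)\|_{V^{\ast}}^2\,dt\leq C\|u_L\|_{L^\infty(0,T;H)}^2\int_0^T\|\nabla u_L\|_{L^2}^2\,dt<\infty$.

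I expect the delicate step to be precisely the antisymmetry: a direct H\"older estimate of $\langle B(u_L),v\rangle_{\Omega_L}$ that does not first exploit (\ref{3.4}) yields only $\|B(u_L)\|_{V^{\ast}}\leq C\|u_L\|_{L^2}^{1/2}\|\nabla u_L\|_{L^2}^{3/2}$, which places $B(u_L)$ merely in $L^{4/3}(0,T;V^{\ast})$---the familiar 3D Navier--Stokes conclusion---and falls short of $L^2$. The improvement to $L^2$ genuinely requires trading one $\|\nabla u_L\|_{L^2}$ factor for a $\|u_L\|_H$ factor via integration by parts, which in turn hinges on the 2D divergence-free structure encoded in (\ref{3.4}).
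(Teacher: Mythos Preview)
Your proposal is correct and follows essentially the same route as the paper's own proof. The paper likewise passes to the planar representation, integrates by parts using (\ref{3.4}) to obtain exactly your antisymmetric form (this is the paper's display (\ref{3.13})), bounds by $c\|u_L\|_{L^4(\Omega_L)}^2\|v\|_{V(\Omega_L)}$, and then applies the two-dimensional Ladyzhenskaya inequality to reach (\ref{3.15}); your closing remark about why the integration by parts is indispensable for the $L^2$-in-time bound is also on point.
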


\begin{proof}
At first we will show boundedness of the operator $B$ from $V\left( \Omega
_{L}\right) $ to $V^{\ast }\left( \Omega _{L}\right) $ for a. e. $t\in
\left( 0,T\right) $. We have 
\begin{equation*}
\left\langle B\left( u_{L}\right) ,v\right\rangle _{\Omega
_{L}}=\left\langle \underset{j=1}{\overset{3}{\sum }}u_{Lj}D_{j}u_{L},v%
\right\rangle _{\Omega _{L}}=
\end{equation*}%
due of (\ref{3.4}) and of the definition \ref{D_3.1} 
\begin{equation*}
\underset{i=1}{\overset{3}{\sum }}\underset{P_{x_{3}}\Omega _{L}}{\int }%
\left[ u_{L1}D_{1}u_{Li}v_{i}+u_{L2}D_{2}u_{Li}v_{i}+u_{L3}\left(
-a_{1}^{-1}D_{1}-a_{2}^{-1}D_{2}\right) u_{Li}v_{i}\right] dx_{1}dx_{2}=
\end{equation*}%
\begin{equation*}
-\underset{i=1}{\overset{3}{\sum }}\underset{P_{x_{3}}\Omega _{L}}{\int }%
\left[ u_{L1}u_{Li}D_{1}v_{i}+u_{L2}u_{Li}D_{2}v_{i}+u_{L3}u_{Li}\left(
-a_{1}^{-1}D_{1}-a_{2}^{-1}D_{2}\right) v_{i}\right] dx_{1}dx_{2}=
\end{equation*}%
\begin{equation}
-\underset{i=1}{\overset{3}{\sum }}\underset{P_{x_{3}}\Omega _{L}}{\int }%
u_{Li}\left[ \left( u_{L1}-a_{1}^{-1}u_{L3}\right) D_{1}v_{i}+\left(
u_{L2}-a_{2}^{-1}u_{L3}\right) D_{2}v_{i}\right] dx_{1}dx_{2}.  \label{3.13}
\end{equation}

Hence follows 
\begin{equation*}
\left\vert \left\langle B\left( u_{L}\right) ,v\right\rangle \right\vert
\leq \underset{i=1}{\overset{3}{\sum }}\underset{P_{x_{3}}\Omega _{L}}{\int }%
\left\vert u_{L}\right\vert ^{2}\left( \left\vert D_{1}v_{i}\right\vert
+\left\vert D_{2}v_{i}\right\vert \right) dx_{1}dx_{2}\leq
\end{equation*}%
\begin{equation}
c\left\Vert u_{L}\right\Vert _{L^{4}\left( \Omega _{L}\right)
}^{2}\left\Vert v\right\Vert _{V\left( \Omega _{L}\right) }\Longrightarrow
\left\Vert B\left( u_{L}\right) \right\Vert _{V^{\ast }}\leq c\left\Vert
u_{L}\right\Vert _{V}^{2}  \label{3.14}
\end{equation}%
due of $V\left( \Omega _{L}\right) \subset L^{4}\left( \Omega _{L}\right) $.
This also shows that operator $B:V\left( \Omega _{L}\right) \longrightarrow
V^{\ast }\left( \Omega _{L}\right) $ is continuous for a. e. $t>0$.

Finally, we obtain needed result using above mentioned inequality and the
well-known inequality (see, \cite{Lad1}, \cite{Lio1}, \cite{Tem1}), which is
correct for the space with two dimension 
\begin{equation*}
\overset{T}{\underset{0}{\int }}\left\Vert B\left( u_{L}\left( t\right)
\right) \right\Vert _{V^{\ast }}^{2}dt\leq c\overset{T}{\underset{0}{\int }}%
\left\Vert u_{L}\left( t\right) \right\Vert _{L^{4}}^{4}dt\leq c_{1}\overset{%
T}{\underset{0}{\int }}\left\Vert u_{L}\left( t\right) \right\Vert
_{L^{2}}^{2}\left\Vert u_{L}\right\Vert _{V}^{2}dt\leq
\end{equation*}%
\begin{equation*}
c_{1}\left\Vert u_{L}\right\Vert _{L^{\infty }\left( 0,T;H\right) }^{2}%
\overset{T}{\underset{0}{\int }}\left\Vert u_{L}\right\Vert
_{V}^{2}dt\Longrightarrow
\end{equation*}%
\begin{equation}
\left\Vert B\left( u_{L}\right) \right\Vert _{L^{2}\left( 0,T;V^{\ast
}\right) }\leq c_{1}\left\Vert u_{L}\right\Vert _{L^{\infty }\left(
0,T;H\right) }\left\Vert u_{L}\right\Vert _{L^{2}\left( 0,T;V\right) }.
\label{3.15}
\end{equation}

What was to be proved.
\end{proof}

\subsection{\label{SS_5.3}Boundedness of $u^{\prime }$}

Sketch of the proof of the inclusion: $u^{\prime }$ belong to bounded subset
of $L^{2}\left( 0,T;V^{\ast }\left( \Omega _{L}\right) \right) $. It is
possible to draw the following conclusion based on receiving of a priori
estimates, on proposition \ref{P_3.1} and on reflexivity of all used spaces:
If we were used of the Faedo-Galerkin's method for investigation we could
obtain estimations for the approximate solutions the same as \ref{3.11}, \ref%
{3.12} and \ref{3.15}. Since $V\left( \Omega _{L}\right) $ is a separable
there exists a sequence of linearly independent elements $\left\{
w_{i}\right\} _{i=1}^{\infty }\subset V\left( \Omega _{L}\right) $, which is
total in $V\left( \Omega _{L}\right) $. For each $m$ we define an
approximate solution $u_{Lm}$ of (\ref{3.3}) or (\ref{3.6}) as follows:

\begin{equation}
u_{Lm}=\overset{m}{\underset{i=1}{\sum }}u_{Lm}^{i}\left( t\right)
w_{i},\quad m=1,\ 2,....  \label{5.4}
\end{equation}%
where $u_{Lm}^{i}\left( t\right) $, $i=\overline{1,m}$ be unknown functions
that will be determined as solutions of following system of the differential
equations that is received according to equation (\ref{3.6}) 
\begin{equation*}
\left\langle \frac{d}{dt}u_{Lm},w_{j}\right\rangle _{\Omega
_{L}}=\left\langle \nu \Delta u_{Lm},w_{j}\right\rangle _{\Omega
_{L}}+b_{L}\left( u_{Lm},u_{Lm},w_{j}\right) +
\end{equation*}%
\begin{equation}
\left\langle f_{L},w_{j}\right\rangle _{\Omega _{L}},\quad t\in \left( 0,T 
\right] ,\quad j=\overline{1,m},\quad  \label{5.5}
\end{equation}%
\begin{equation*}
u_{Lm}\left( 0\right) =u_{0Lm}
\end{equation*}%
where $\left\{ u_{0Lm}\right\} _{m=1}^{\infty }\subset H\left( \Omega
_{L}\right) $ is some sequence such that $u_{0Lm}\longrightarrow u_{0L}$ in $%
H\left( \Omega _{L}\right) $ as $m\longrightarrow \infty $. (Since $V\left(
\Omega _{L}\right) $ is everywhere dense in $H\left( \Omega _{L}\right) $ we
can determine $u_{0Lm}$ by using the total system $\left\{ w_{i}\right\}
_{i=1}^{\infty }$). \ 

With use (\ref{5.4}) in (\ref{5.5}) we have 
\begin{equation*}
\overset{m}{\underset{j=1}{\sum }}\left\langle w_{j},w_{i}\right\rangle
_{\Omega _{L}}\frac{d}{dt}u_{Lm}^{j}\left( t\right) -\nu \overset{m}{%
\underset{j=1}{\sum }}\left\langle \Delta w_{j},w_{i}\right\rangle _{\Omega
_{L}}u_{Lm}^{j}\left( t\right) +
\end{equation*}%
\begin{equation*}
\overset{m}{\underset{j,k=1}{\sum }}b_{L}\left( w_{j},w_{k},w_{i}\right)
u_{Lm}^{j}\left( t\right) u_{Lm}^{k}\left( t\right) =\left\langle
f_{L}\left( t\right) ,w_{i}\right\rangle _{\Omega _{L}},\ i=\overline{1,m}
\end{equation*}%
and taking into account the nonsingularity of matrix $\left\langle
w_{i},w_{j}\right\rangle _{\Omega _{L}}$, $i,j=\overline{1,m}$ we get to
system of differential equations for $u_{Lm}^{i}\left( t\right) $, $%
i=1,...,m $ 
\begin{equation*}
\frac{du_{Lm}^{i}\left( t\right) }{dt}=\overset{m}{\underset{j=1}{\sum }}%
c_{i,j}\left\langle f_{L}\left( t\right) ,w_{j}\right\rangle _{\Omega
_{L}}-\nu \overset{m}{\underset{j=1}{\sum }}d_{i,j}u_{Lm}^{j}\left( t\right)
+
\end{equation*}%
\begin{equation}
\overset{m}{\underset{j,k=1}{\sum }}h_{ijk}u_{Lm}^{j}\left( t\right)
u_{Lm}^{k}\left( t\right) ,  \label{5.6}
\end{equation}%
\begin{equation*}
u_{Lm}^{i}\left( 0\right) =u_{0Lm}^{i},\quad i=1,...,m,\quad m=1,\ 2,...
\end{equation*}%
here $u_{0Lm}^{i}$ is $i^{th}$ component of $u_{0L}$ of the representation $%
u_{0L}=\overset{\infty }{\underset{k=1}{\sum }}u_{0Lm}^{k}w_{k}$.

The Cauchy problem for the nonlinear differential system (\ref{5.6}) has
solution, which is defined on the whole interval $(0,T]$ by virtue of
uniformity of estimations received in subsections \ref{SS_5.1} and \ref%
{SS_5.2}. Consequently, the approximate solution $u_{Lm}$ belong to a
bounded subset of $W^{1,2}\left( 0,T;V^{\ast }\left( \Omega _{L}\right)
\right) $ for every $m=1,\ 2,...$ since the right side of (\ref{5.6}) belong
to the bounded subset of $L^{2}\left( 0,T;V^{\ast }\left( \Omega _{L}\right)
\right) $ as were proved in subsections \ref{SS_5.1} and \ref{SS_5.2}, by
virtue of the lemma that is adduced below.

\begin{lemma}
(\cite{Tem1}) Let $X$ be a given Banach space with dual $X^{\ast }$ and let $%
u$ and $g$ be two functions belonging to $L^{1}\left( a,b;X\right) $. Then,
the following three conditions are equivalent

\textit{(i)} $u$ is a. e. equal to a primitive function of $g$, 
\begin{equation*}
u\left( t\right) =\xi +\underset{a}{\overset{t}{\int }}g\left( s\right)
ds,\quad \xi \in X,\quad \text{a.e. }t\in \left[ a,b\right]
\end{equation*}%
\textit{(ii)} For each test function $\varphi \in D\left( \left( a,b\right)
\right) $, 
\begin{equation*}
\underset{a}{\overset{b}{\int }}u\left( t\right) \varphi ^{\prime }\left(
t\right) dt=-\underset{a}{\overset{b}{\int }}g\left( t\right) \varphi \left(
t\right) dt,\quad \varphi ^{\prime }=\frac{d\varphi }{dt}
\end{equation*}%
(iii) For each $\eta \in X^{\ast }$, 
\begin{equation*}
\frac{d}{dt}\left\langle u,\eta \right\rangle =\left\langle g,\eta
\right\rangle
\end{equation*}%
in the scalar distribution sense, on $(a,b)$. If \textit{(i) - (iii)} are
satisfied $u$, in particular, is a. e. equal to a continuous function from $%
[a,b]$ into $X$.
\end{lemma}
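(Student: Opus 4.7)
The plan is to establish the cycle \textit{(i)} $\Rightarrow$ \textit{(ii)} $\Rightarrow$ \textit{(iii)} $\Rightarrow$ \textit{(i)} and then extract the continuity statement as a corollary of \textit{(i)}. The first implication is a Fubini/integration-by-parts calculation in the Bochner setting: assuming $u(t)=\xi+\int_a^t g(s)\,ds$ a.e., for each $\varphi\in D((a,b))$ one writes
\begin{equation*}
\int_a^b u(t)\varphi'(t)\,dt = \xi\int_a^b \varphi'(t)\,dt + \int_a^b\!\!\int_a^t g(s)\varphi'(t)\,ds\,dt.
\end{equation*}
The first term vanishes because $\varphi$ has compact support in $(a,b)$; in the second I would swap the order of integration (Fubini applies since $g\in L^1(a,b;X)$ and $\varphi'$ is bounded) to get $\int_a^b g(s)\bigl(\int_s^b\varphi'(t)\,dt\bigr)ds=-\int_a^b g(s)\varphi(s)\,ds$, which is \textit{(ii)}.

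The step \textit{(ii)} $\Rightarrow$ \textit{(iii)} is immediate from the fact that any $\eta\in X^{\ast}$ commutes with the Bochner integral: applying $\eta$ to both sides of the identity in \textit{(ii)} yields $\int_a^b\langle u(t),\eta\rangle\varphi'(t)\,dt=-\int_a^b\langle g(t),\eta\rangle\varphi(t)\,dt$ for all $\varphi\in D((a,b))$, which is exactly the assertion that $\frac{d}{dt}\langle u,\eta\rangle=\langle g,\eta\rangle$ in the scalar distribution sense.

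The substantive implication, and where I expect the main difficulty, is \textit{(iii)} $\Rightarrow$ \textit{(i)}. I would set $U(t)\equiv\int_a^t g(s)\,ds$; standard Bochner theory gives $U\in C([a,b];X)$, and by the first implication (applied with $u\leftarrow U$ and $\xi=0$, which follows trivially from the definition of $U$) $U$ satisfies the distributional identity in \textit{(iii)} with the same $g$. Hence $w:=u-U\in L^1(a,b;X)$ satisfies $\frac{d}{dt}\langle w,\eta\rangle=0$ in $D'((a,b))$ for every $\eta\in X^{\ast}$, so by the scalar version of du Bois--Reymond's lemma, for each $\eta$ there is a constant $c_\eta\in\mathbb{R}$ with $\langle w(t),\eta\rangle=c_\eta$ for almost every $t$ (with a null set depending on $\eta$). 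The delicate point is to promote this scalar conclusion to the existence of a single $\xi\in X$ with $w(t)=\xi$ a.e.; the exceptional set must be chosen independently of $\eta$. This is where I would invoke the Pettis measurability theorem: since $w$ is strongly measurable it is essentially separably valued, so there is a separable closed subspace $X_0\subset X$ containing $w(t)$ for a.e.\ $t$. Choosing a countable norm-dense sequence $\{\eta_n\}\subset X^{\ast}$ whose restrictions separate points of $X_0$, the intersection of the corresponding null sets is still null, and on its complement $w(\cdot)$ takes the constant value $\xi$ determined by $\langle\xi,\eta_n\rangle=c_{\eta_n}$. Then $u=\xi+U$ a.e., establishing \textit{(i)}.

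The concluding assertion that $u$ is a.e.\ equal to a continuous $X$-valued function then follows from \textit{(i)} and the continuity of $t\mapsto U(t)=\int_a^t g(s)\,ds$, which is a classical consequence of absolute continuity of the Bochner integral with respect to the measure of the domain of integration.
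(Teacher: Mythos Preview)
Your proof is correct and follows the standard route (the cycle \textit{(i)} $\Rightarrow$ \textit{(ii)} $\Rightarrow$ \textit{(iii)} $\Rightarrow$ \textit{(i)}, with the Pettis/separability argument to upgrade the scalar constancy to strong constancy). However, the paper does not actually prove this lemma: it is quoted verbatim from Temam \cite{Tem1} and invoked as a black box in Section~\ref{SS_5.3}, so there is no ``paper's own proof'' to compare against. Your argument is essentially the one Temam gives.
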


It not is difficult to see that if passing to limit at $m\longrightarrow
\infty $ in equation (\ref{5.5}) (maybe by a subsequence $\left\{ u_{Lm_{%
\mathit{l}}}\right\} _{\mathit{l}=1}^{\infty }$ as is known such subsequence
exists) and to take $\forall v\in V\left( \Omega _{L}\right) $ instead of $%
w_{k}$\ then we get \ 
\begin{equation*}
\left\langle \frac{d}{dt}u_{L},v\right\rangle _{\Omega _{L}}=\left\langle
f_{L}+\nu \Delta u_{L}-\chi ,v\right\rangle _{\Omega _{L}},
\end{equation*}%
as $\left\{ w_{i}\right\} _{i=1}^{\infty }$ is total in $V\left( \Omega
_{L}\right) $, where $\chi $ belong to $L^{2}\left( 0,T;V^{\ast }\left(
\Omega _{L}\right) \right) $ and is determined 
\begin{equation*}
\underset{\mathit{l}\longrightarrow \infty }{\lim }\left\langle B\left(
u_{Lm_{\mathit{l}}}\right) ,v\right\rangle _{\Omega _{L}}=\left\langle \chi
,v\right\rangle _{\Omega _{L}}.
\end{equation*}%
So, according to above a priori estimations and Proposition \ref{P_3.1} we
obtain that the right side belong to $L^{2}\left( 0,T\right) $ then the left
side also belongs to $L^{2}\left( 0,T\right) $, i. e. 
\begin{equation*}
\frac{du_{L}}{dt}\in L^{2}\left( 0,T;V^{\ast }\left( \Omega _{L}\right)
\right) .
\end{equation*}%
Consequently, the function $u_{L}$ belong to a bounded subset of the space $%
\mathcal{V}\left( Q_{L}^{T}\right) $, where 
\begin{equation}
\mathcal{V}\left( Q_{L}^{T}\right) \equiv V\left( Q_{L}^{T}\right) \cap
W^{1,2}\left( 0,T;V^{\ast }\left( \Omega _{L}\right) \right) ,  \label{3.16}
\end{equation}%
by virtue of the above mentioned lemma and abstract form of Riesz-Fischer
theorem.

Thus for the proof that $u_{l}$ is the solution of equation (\ref{3.3}) or (%
\ref{3.6}) remains to prove that $\chi =B\left( u_{L}\right) $ or $%
\left\langle \chi ,v\right\rangle _{\Omega _{L}}=b_{L}\left(
u_{L},u_{L},v\right) $ for $\forall v\in V\left( \Omega _{L}\right) $. \ 

\subsection{\label{SS_5.4}Weakly compactness of operator $B$}

\begin{proposition}
\label{P_3.2}Operator $B:\mathcal{V}\left( Q_{L}^{T}\right) \longrightarrow
L^{2}\left( 0,T;V^{\ast }\left( \Omega _{L}\right) \right) $ is weakly
compact operator, i. e. any weakly convergent sequence $\left\{
u_{L}^{m}\right\} _{1}^{\infty }\subset \mathcal{V}\left( Q_{L}^{T}\right) $
possesses a subsequence $\left\{ u_{L}^{m_{k}}\right\} _{1}^{\infty }\subset
\left\{ u_{L}^{m}\right\} _{1}^{\infty }$ such that $\left\{ B\left(
u_{L}^{m_{k}}\right) \right\} _{1}^{\infty }$ weakly converge in $%
L^{2}\left( 0,T;V^{\ast }\left( \Omega _{L}\right) \right) $.
\end{proposition}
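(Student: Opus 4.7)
The plan is to combine the a priori bound already established in Proposition \ref{P_3.1} with an Aubin--Lions compactness argument, so that after extracting two successive subsequences one obtains weak convergence of $B(u_{L}^{m_{k}})$ in $L^{2}(0,T;V^{\ast }(\Omega _{L}))$ together with identification of the limit.

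First, let $\{u_{L}^{m}\}\subset \mathcal{V}(Q_{L}^{T})$ be weakly convergent, with weak limit $u_{L}$. By the uniform boundedness principle the sequence is bounded in $\mathcal{V}(Q_{L}^{T})$, hence in $L^{2}(0,T;V(\Omega _{L}))$ and in $L^{\infty}(0,T;(H(\Omega _{L}))^{3})$, while $\{(u_{L}^{m})'\}$ is bounded in $L^{2}(0,T;V^{\ast }(\Omega _{L}))$. Applying estimate (\ref{3.15}) to each $u_{L}^{m}$ shows that $\{B(u_{L}^{m})\}$ is bounded in the reflexive Hilbert space $L^{2}(0,T;V^{\ast }(\Omega _{L}))$, so by reflexivity (Eberlein--Smulian) a subsequence $B(u_{L}^{m_{k}})$ converges weakly to some $\chi \in L^{2}(0,T;V^{\ast }(\Omega _{L}))$. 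This already establishes the weak compactness asserted in the statement.

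To pin down $\chi =B(u_{L})$, which is what will actually be used when closing the Galerkin scheme of subsection \ref{SS_5.3}, I would invoke the Aubin--Lions lemma. Because $\Omega _{L}\subset R^{2}$ is bounded and Lipschitz, the chain $V(\Omega _{L})\hookrightarrow H(\Omega _{L})\hookrightarrow V^{\ast }(\Omega _{L})$ has compact first inclusion; combined with the bound on $(u_{L}^{m_{k}})'$ in $L^{2}(0,T;V^{\ast })$, the lemma yields, along a further subsequence still denoted $u_{L}^{m_{k}}$, strong convergence to $u_{L}$ in $L^{2}(0,T;(H(\Omega _{L}))^{3})$ and almost everywhere on $Q_{L}^{T}$. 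For any test function $v\in V(\Omega _{L})$ I would rewrite the trilinear form exactly as in (\ref{3.13}), so that no derivative falls on $u_{L}^{m_{k}}$, only on $v$. The quadratic products $u_{Li}^{m_{k}}u_{Lj}^{m_{k}}$ then converge to $u_{Li}u_{Lj}$ in $L^{1}(Q_{L}^{T})$ by Vitali's theorem, since they are bounded in $L^{2}(Q_{L}^{T})$ thanks to the two-dimensional embedding $V(\Omega _{L})\subset L^{4}(\Omega _{L})$ and converge almost everywhere. Passing to the limit in $\langle B(u_{L}^{m_{k}}),v\rangle _{\Omega _{L}}$ gives $\langle \chi ,v\rangle _{\Omega _{L}}=\langle B(u_{L}),v\rangle _{\Omega _{L}}$ for almost every $t$ and every such $v$, and hence $\chi =B(u_{L})$.

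The main obstacle is precisely this passage to the limit in the quadratic nonlinearity: weak convergence in $\mathcal{V}(Q_{L}^{T})$ is never enough on its own to pass to the limit in $u\cdot \nabla u$, so the essential content of the proof is the compactness step. This is where the two-dimensional geometry of $\Omega _{L}$ produced by the restriction procedure of Section \ref{Sec_4} plays its decisive role, since in the original three-dimensional setting the embedding $V\subset L^{4}$ fails and Proposition \ref{P_3.1} only delivers $L^{4/3}$ integrability in time; this is ultimately the reason for restricting the problem to the hyperplane cross-section before attempting the compactness argument.
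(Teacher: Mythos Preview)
Your proof is correct and follows essentially the same route as the paper: boundedness of $\{B(u_L^{m})\}$ via Proposition~\ref{P_3.1} and (\ref{3.15}), extraction of a weakly convergent subsequence by reflexivity, Aubin--Lions compactness to upgrade to strong $L^{2}(0,T;H)$ convergence, and identification of the limit after rewriting the trilinear form as in (\ref{3.13}). The only cosmetic difference is that the paper tests first against smooth $v\in\mathcal{C}^{1}(\overline{Q}_{L})$ and passes to the limit by a strong-times-weak product argument before extending by density, whereas you invoke Vitali directly; either works, but note that for general $v\in V(\Omega_{L})$ you should use that the $L^{2}(Q_{L}^{T})$-bound plus a.e.\ convergence of the products $u_{Li}^{m_{k}}u_{Lj}^{m_{k}}$ yields weak $L^{2}$ (not merely $L^{1}$) convergence, which is what is actually needed when pairing against $D_{j}v_{i}\in L^{2}$.
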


\begin{proof}
Let a sequence $\left\{ u_{L}^{m}\right\} _{1}^{\infty }\subset \mathcal{V}%
\left( Q_{L}^{T}\right) $ be weakly converge to $u_{L}^{0}$ in $\mathcal{V}%
\left( Q_{L}^{T}\right) $. Then there exists a subsequence $\left\{
u_{L}^{m_{k}}\right\} _{1}^{\infty }\subset \left\{ u_{L}^{m}\right\}
_{1}^{\infty }$ such that $u_{L}^{m_{k}}\longrightarrow u_{L}^{0}$ in $%
L^{2}\left( 0,T;H\right) $ according to known embedding theorems, i. e.
since the inclusion 
\begin{equation*}
L^{2}\left( 0,T;V\left( \Omega _{L}\right) \right) \cap W^{1,2}\left(
0,T;V^{\ast }\left( \Omega _{L}\right) \right) \subset L^{2}\left(
0,T;H\right)
\end{equation*}%
is compact (see, e. g. \cite{Lio1}, \cite{Tem1}). Indeed, for us it is
enough to show that the operator generated by $\underset{j=1}{\overset{3}{%
\sum }}u_{Lj}D_{j}u_{L}$ is weakly compact from $\mathcal{V}\left(
Q_{L}^{T}\right) $ to $L^{2}\left( 0,T;V^{\ast }\left( \Omega _{L}\right)
\right) $. From a priori estimations and Proposition \ref{P_3.1} follows
that operator $B:\mathcal{V}\left( Q_{L}^{T}\right) \longrightarrow
L^{2}\left( 0,T;V^{\ast }\left( \Omega _{L}\right) \right) $ is bounded,
consequently $B\left( \left\{ u_{L}^{m_{k}}\right\} _{1}^{\infty }\right) $
belongs to bounded subset of the space $L^{2}\left( 0,T;V^{\ast }\left(
\Omega _{L}\right) \right) $. This lead the weak convergence 
\begin{equation}
B\left( u_{L}^{m_{k}}\right) \rightharpoonup \chi \quad \text{ in }%
L^{2}\left( 0,T;V^{\ast }\left( \Omega _{L}\right) \right)  \label{3.17}
\end{equation}%
according by reflexivity of this space (at least, there exists such
subsequence that this occurs).

Introduce the vector space 
\begin{equation*}
\mathcal{C}^{1}\left( \overline{Q}_{L}\right) \equiv \left\{ v\left\vert \
v_{i}\in C^{1}\left( \left[ 0,T\right] ;C_{0}^{1}\left( \overline{\Omega _{L}%
}\right) \right) ,\right. i=1,2,3\right\}
\end{equation*}%
and consider the trilinear form 
\begin{equation*}
\underset{0}{\overset{T}{\int }}\left\langle B\left( u_{L}^{m}\right)
,v\right\rangle _{\Omega _{L}}dt=\underset{0}{\overset{T}{\int }}b\left(
u_{L}^{m},u_{L}^{m},v\right) dt=\underset{0}{\overset{T}{\int }}\left\langle 
\underset{j=1}{\overset{3}{\sum }}u_{Lj}^{m}D_{j}u_{L}^{m},v\right\rangle
_{\Omega _{L}}dt=
\end{equation*}%
where $v\in \mathcal{C}^{1}\left( \overline{Q}_{L}\right) $, then according
to (\ref{3.13}) we get 
\begin{equation*}
-\underset{i=1}{\overset{3}{\sum }}\underset{0}{\overset{T}{\int }}\underset{%
P_{x_{3}}\Omega _{L}}{\int }\left[ \left(
u_{Li}^{m}u_{L1}^{m}-a_{1}^{-1}u_{Li}^{m}u_{L3}^{m}\right) D_{1}v_{i}+\left(
u_{Li}^{m}u_{L2}^{m}-a_{2}^{-1}u_{Li}^{m}u_{L3}^{m}\right) D_{2}v_{i}\right]
dx_{1}dx_{2}dt.
\end{equation*}

If in this sum separately we take arbitrary therm then it is not difficult
to see that the following convergence is true 
\begin{equation*}
\underset{0}{\overset{T}{\int }}\underset{P_{x_{3}}\Omega _{L}}{\int }%
u_{Li}^{m}u_{L1}^{m}D_{1}v_{i}dx_{1}dx_{2}dt\longrightarrow \underset{0}{%
\overset{T}{\int }}\underset{P_{x_{3}}\Omega _{L}}{\int }%
u_{Li}u_{L1}D_{1}v_{i}dx_{1}dx_{2}dt
\end{equation*}%
because $u_{Li}^{m_{k}}\longrightarrow u_{Li}$ in $L^{2}\left( 0,T;H\right) $
and $u_{Li}^{m_{k}}\rightharpoonup u_{Li}$ in $L^{2}\left( 0,T;H\right) $
(at least) since $u_{L}^{m}$ belong, at least, to a bounded subset of $%
\mathcal{V}\left( Q_{L}^{T}\right) $ and (\ref{3.17}) holds for each therm.
Thus we obtain 
\begin{equation*}
\chi =B\left( u_{L}\right) \Longrightarrow B\left( u_{L}^{m_{k}}\right)
\rightharpoonup B\left( u_{L}\right) \quad \text{ in }L^{2}\left(
0,T;V^{\ast }\left( \Omega _{L}\right) \right)
\end{equation*}%
by using the density of $\mathcal{C}^{1}\left( \overline{Q}_{L}\right) $ in $%
\mathcal{V}\left( Q_{L}^{T}\right) $.
\end{proof}

Consequently, we proved the existence of the function $u_{L}\in \mathcal{V}%
\left( Q_{L}^{T}\right) $ that satisfies equation (\ref{3.6}) by applying to
this problem of the Faedo-Galerkin method by virtue of the above mentioned
results.

\subsection{\label{SS_5.5}Realisation of the initial condition}

The proof of the realisation of initial condition we can lead according to
same way as in \cite{Tem1} (see, also \cite{Lad1}, \cite{Lio1}), we will act
in just the same way.

Let $\phi $ be a continuously differentiable function on $[0,T]$ with $\phi
(T)=0$. We multiply (\ref{5.5}) by $\phi (t)$, and then integrate the first
term by parts. This leads to the equation 
\begin{equation*}
-\underset{0}{\overset{T}{\int }}\left\langle u_{Lm},\frac{d}{dt}\phi
(t)w_{j}\right\rangle _{\Omega _{L}}dt=\underset{0}{\overset{T}{\int }}%
\left\langle \nu \Delta u_{Lm},\phi (t)w_{j}\right\rangle _{\Omega _{L}}dt+
\end{equation*}%
\begin{equation*}
\underset{0}{\overset{T}{\int }}b\left( u_{Lm},u_{Lm},\phi (t)w_{j}\right)
dt+\underset{0}{\overset{T}{\int }}\left\langle f_{L},\phi
(t)w_{j}\right\rangle _{\Omega _{L}}dt+\left\langle u_{0Lm},\phi
(0)w_{j}\right\rangle _{\Omega _{L}}.
\end{equation*}

We can pass to the limit with respect to subsequence $\left\{
u_{Lm_{l}}\right\} _{l=1}^{\infty }$ of the sequence $\left\{ u_{Lm}\right\}
_{m=1}^{\infty }$ in all of terms by virtue of results which are proved in
above subsections. Then we find the equation 
\begin{equation*}
-\underset{0}{\overset{T}{\int }}\left\langle u_{L},\frac{d}{dt}\phi
(t)w_{j}\right\rangle _{\Omega _{L}}dt=\underset{0}{\overset{T}{\int }}%
\left\langle \nu \Delta u_{L},\phi (t)w_{j}\right\rangle _{\Omega _{L}}dt+
\end{equation*}%
\begin{equation}
\underset{0}{\overset{T}{\int }}b\left( u_{L},u_{L},\phi (t)w_{j}\right) dt+%
\underset{0}{\overset{T}{\int }}\left\langle f_{L},\phi
(t)w_{j}\right\rangle _{\Omega _{L}}dt+\left\langle u_{0L},\phi
(0)w_{j}\right\rangle _{\Omega _{L}},  \label{5.8}
\end{equation}%
that holds for each $w_{j}$, $j=1,2,...$. Consequently, this equation holds
for any finite linear combination of the $w_{j}$ and moreover because of a
continuity (\ref{5.8}) remains true for any $v\in V\left( \Omega _{L}\right) 
$.

Whence, one can draw conclusion that function $u_{L}$ satisfies equation (%
\ref{3.6}) in the distribution sense.

Now if we multiply (\ref{3.6}) by $\phi (t)$, and integrate with respect to $%
t$ after integrating the first term by parts, we get 
\begin{equation*}
-\underset{0}{\overset{T}{\int }}\left\langle u_{L},v\frac{d}{dt}\phi
(t)\right\rangle _{\Omega _{L}}dt-\underset{0}{\overset{T}{\int }}%
\left\langle \nu \Delta u_{L},\phi (t)v\right\rangle _{\Omega _{L}}dt+
\end{equation*}%
\begin{equation*}
\underset{0}{\overset{T}{\int }}\left\langle \underset{j=1}{\overset{3}{\sum 
}}u_{Lj}D_{j}u_{L},\phi (t)v\right\rangle _{\Omega _{L}}dt=\underset{0}{%
\overset{T}{\int }}\left\langle f_{L},\phi (t)v\right\rangle _{\Omega
_{L}}dt+\left\langle u_{L}\left( 0\right) ,\phi (0)v\right\rangle _{\Omega
_{L}}.
\end{equation*}

If we will compare this with (\ref{5.8}) after replacing $w_{j}$ with any $%
v\in V\left( \Omega _{L}\right) $ then we obtain 
\begin{equation*}
\phi (0)\left\langle u_{L}\left( 0\right) -u_{0L},v\right\rangle _{\Omega
_{L}}=0.
\end{equation*}%
Hence, we get the realisation of the initial condition by virtue of
arbitrariness of $v\in V\left( \Omega _{L}\right) $ and $\phi $, as one can
choose $\phi (0)\neq 0$.

Consequently, the following result is proved.

\begin{theorem}
\label{Th_2.1}Under above mentioned conditions for any 
\begin{equation*}
u_{0L}\in \left( H\left( \Omega _{L}\right) \right) ^{3},\quad f_{L}\in
L^{2}\left( 0,T;V^{\ast }\left( \Omega _{L}\right) \right)
\end{equation*}%
problem (\ref{3.3}) - (\ref{3.5}) has weak solution $u_{L}\left( t,x\right) $
that belongs to $\mathcal{V}\left( Q_{L}^{T}\right) $.
\end{theorem}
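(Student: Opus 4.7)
The plan is to assemble the Faedo--Galerkin argument whose ingredients have all been prepared in subsections \ref{SS_5.1}--\ref{SS_5.5}. First I would fix a countable, linearly independent and total system $\{w_{i}\}_{i=1}^{\infty}\subset V(\Omega_{L})$, which exists because $V(\Omega_{L})$ is a separable Hilbert space, and then seek finite-dimensional approximants $u_{Lm}=\sum_{i=1}^{m}u_{Lm}^{i}(t)\,w_{i}$ as in (\ref{5.4}). Projecting equation (\ref{3.6}) onto $\mathrm{span}\{w_{1},\ldots,w_{m}\}$ and using the nondegeneracy of the Gram matrix $\langle w_{i},w_{j}\rangle_{\Omega_{L}}$ yields the quadratic ODE system (\ref{5.6}) for the coefficients $u_{Lm}^{i}(t)$ with initial data $u_{0Lm}$ chosen so that $u_{0Lm}\to u_{0L}$ in $H(\Omega_{L})$. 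Local solvability is a routine consequence of Cauchy--Picard for the locally Lipschitz right-hand side.

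Next I would show these local solutions extend to $[0,T]$ and lie in a fixed ball of $\mathcal{V}(Q_{L}^{T})$. The key step is to test (\ref{5.5}) against $u_{Lm}$ itself; the calculations carried out in subsection \ref{SS_5.1} (which rely crucially on the cancellation (\ref{3.9}) of the trilinear form $b_{L}(u_{Lm},u_{Lm},u_{Lm})$ obtained from the restricted divergence-free condition (\ref{3.4})) yield energy bounds of the form (\ref{3.11})--(\ref{3.12}), uniform in $m$. Combined with Proposition \ref{P_3.1}, which gives $\|B(u_{Lm})\|_{L^{2}(0,T;V^{\ast})}\le c_{1}\|u_{Lm}\|_{L^{\infty}(0,T;H)}\|u_{Lm}\|_{L^{2}(0,T;V)}$, one reads off from (\ref{5.5}) that $u_{Lm}^{\prime}$ is bounded in $L^{2}(0,T;V^{\ast}(\Omega_{L}))$. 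Hence $\{u_{Lm}\}$ lies in a bounded subset of $\mathcal{V}(Q_{L}^{T})$, which is reflexive.

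I would then extract, by Banach--Alaoglu together with the Aubin--Lions compactness embedding $\mathcal{V}(Q_{L}^{T})\hookrightarrow L^{2}(0,T;H(\Omega_{L}))$, a subsequence (not relabeled) such that $u_{Lm}\rightharpoonup u_{L}$ weakly in $L^{2}(0,T;V(\Omega_{L}))$, $u_{Lm}^{\prime}\rightharpoonup u_{L}^{\prime}$ weakly in $L^{2}(0,T;V^{\ast}(\Omega_{L}))$, and $u_{Lm}\to u_{L}$ strongly in $L^{2}(0,T;H(\Omega_{L}))$. The linear terms pass to the limit immediately. The main obstacle is passing to the limit in the nonlinear term $B(u_{Lm})$; this is precisely the content of Proposition \ref{P_3.2}, where the strong $L^{2}(0,T;H)$ convergence is combined with the weak convergence of one factor to identify the weak limit $\chi=B(u_{L})$ on a dense set of test fields $v\in\mathcal{C}^{1}(\overline{Q}_{L})$, and then on all of $V(\Omega_{L})$ by density.

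Finally, I would verify that the limit $u_{L}$ belongs to $\mathcal{V}(Q_{L}^{T})$ (using the lemma on equivalent characterizations of primitive functions quoted in subsection \ref{SS_5.3}) and recover the initial condition by the standard trick of testing against $\phi(t)w_{j}$ with $\phi(T)=0$, $\phi(0)\ne 0$, integrating by parts in time, and comparing the resulting identity to (\ref{5.8}); arbitrariness of $\phi(0)$ and of $v\in V(\Omega_{L})$ forces $\langle u_{L}(0)-u_{0L},v\rangle_{\Omega_{L}}=0$, which together with the weak continuity $u_{L}\in C_{w}([0,T];H(\Omega_{L}))$ yields $u_{L}(0)=u_{0L}$. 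This concludes the existence assertion of Theorem \ref{Th_2.1}.
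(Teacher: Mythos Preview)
Your proposal is correct and follows essentially the same Faedo--Galerkin route as the paper: you build the approximants (\ref{5.4})--(\ref{5.6}), invoke the a priori estimates of subsection \ref{SS_5.1} and Proposition \ref{P_3.1} to get the uniform bounds in $\mathcal{V}(Q_{L}^{T})$, use Proposition \ref{P_3.2} together with Aubin--Lions compactness to pass to the limit in the nonlinear term, and recover the initial datum exactly as in subsection \ref{SS_5.5}. There is no substantive difference in strategy or in the key lemmas used.
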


\begin{remark}
From the obtained a priori estimations and Propositions \ref{P_3.1} and \ref%
{P_3.2} follows of the fulfilment of all conditions of the general theorem
of the compactness method (see, e. g. \cite{Sol3}, \cite{SolAhm}, and for
complementary informations see, \cite{SolSpr}, \cite{Sol4}). We would like
to note also that the general theorems also were proved by using of the
Faedo-Galerkin method and $\varepsilon -$regularization.

Here we would like to note we could prove the existence of problem (\ref{3.3}%
) - (\ref{3.5}) by other way with using of the following general existence
theorem (see, \cite{Sol3}, \cite{SolAhm}) if in the adduced below theorem
the pn-space to replace onto $V\left( \Omega _{L}\right) $.

Let $X$ and $Y$ be Banach spaces with duals $X^{\ast }$ and $Y^{\ast }$
respectively, $Y$ be a reflexive Banach space, $\mathcal{M}_{0}\subseteq X$
be a weakly complete "reflexive" $pn-$space (see, Appendix A \cite{SolAhm}
or \cite{Sol3}), $X_{0}\subseteq \mathcal{M}_{0}\cap Y$ be a separable
vector topological space such that $\overline{X_{0}}^{\mathcal{M}_{0}}\equiv 
\mathcal{M}_{0}$, $\overline{X_{0}}^{Y}\equiv Y$. \ Consider the following
problem: 
\begin{equation}
\frac{dx}{dt}+f(t,x\left( t\right) )=y\left( t\right) ,\quad y\in
L^{p_{1}}\left( 0,T;Y\right) ;\quad x\left( 0\right) =0  \label{Eqn 2.4}
\end{equation}%
Let the following conditions be fulfilled: \newline
\textit{i}) $f:\underset{0}{\mathbf{P}}$\/$_{1,p_{0},p_{1}}\left( 0,T;%
\mathcal{M}_{0},Y\right) \rightarrow L^{p_{1}}\left( 0,T;Y\right) $ is a
weakly compact operator, where 
\begin{equation*}
\underset{0}{\mathbf{P}}\/_{1,p_{0},p_{1}}\left( 0,T;\mathcal{M}%
_{0},Y\right) \equiv L^{p_{0}}\left( 0,T;\mathcal{M}_{0}\right) \cap
W^{1,p_{1}}\left( 0,T;Y\right) \cap \left\{ x\left( t\right) \left\vert \
x\left( 0\right) =0\right. \right\} ,
\end{equation*}%
$1<\max \{p_{1},p_{1}^{`}\}\leq p_{0}<\infty $, $p_{1}^{\prime }=\frac{p_{1}%
}{p_{1}-1}$;\newline
(\textit{ii}) there is a linear continuous operator $L:W^{s,p_{2}}\left(
0,T;X_{0}\right) \rightarrow W^{s,p_{2}}\left( 0,T;Y^{\ast }\right) $, $%
s\geq 0$, $p_{2}\geq 1$ such that $L$ commutes with $\frac{d}{dt}$ and the
conjugate operator $L^{\ast }$ has $ker(L^{\ast })=\left\{ 0\right\} $;

\textit{(iii) }there exist a continuous function $\varphi :R_{+}^{1}\cup
\left\{ 0\right\} \longrightarrow R^{1}$ and numbers $\tau _{0}\geq 0$ and $%
\tau _{1}>0$ such that $\varphi (r)$ is nondecreasing for $\tau \geq \tau
_{0}$,\ $\varphi \left( \tau _{1}\right) >0$ and operators $f$ and $L$
satisfy the following inequality for any $x\in L^{p_{0}}\left(
0,T;X_{0}\right) $ 
\begin{equation*}
\underset{0}{\overset{T}{\int }}\langle f(t,x\left( t\right) ),Lx\left(
t\right) \rangle dt\geq \varphi \left( \lbrack x]_{L^{p_{0}}\left( \mathcal{M%
}_{0}\right) }\right) [x]_{L^{p_{0}}\left( \mathcal{M}_{0}\right) };
\end{equation*}%
\textit{(iv) }there exist a linear bounded operator $L_{0}:X_{0}\rightarrow
Y $ and constants $C_{0}>0$, $C_{1},C_{2}\geq 0$, $\nu >1$ such that the
inequalities 
\begin{eqnarray*}
\underset{0}{\overset{T}{\int }}\langle \xi \left( t\right) ,L\xi \left(
t\right) \rangle dt &\geq &C_{0}\left\Vert L_{0}\xi \right\Vert
_{L^{p_{1}}\left( 0,T;Y\right) }^{\nu }-C_{2}, \\
\underset{0}{\overset{t}{\int }}\langle \frac{dx}{d\tau },Lx\left( \tau
\right) \rangle d\tau &\geq &C_{1}\left\Vert L_{0}x\right\Vert _{Y}^{\nu
}\left( t\right) -C_{2},\quad a.e.\ t\in \left( 0,T\right]
\end{eqnarray*}%
hold for any $x\in W^{1,p_{0}}\left( 0,T;X_{0}\right) $ and $\xi \in
L^{p_{0}}\left( 0,T;X_{0}\right) $.
\end{remark}

\begin{theorem}
\label{Theorem 2.2}Assume that conditions (i) - (iv) are fulfilled. Then the
Cauchy problem (\ref{Eqn 2.4}) is solvable in $\underset{0}{\mathbf{P}}$\/$%
_{1,p_{0},p_{1}}\left( 0,T;\mathcal{M}_{0},Y\right) $ in the following sense 
\begin{equation*}
\underset{0}{\overset{T}{\int }}\left\langle \frac{dx}{dt}+f(t,x\left(
t\right) ),y^{\ast }\left( t\right) \right\rangle dt=\underset{0}{\overset{T}%
{\int }}\left\langle y\left( t\right) ,y^{\ast }\left( t\right)
\right\rangle dt,\quad \forall y^{\ast }\in L^{p_{1}\prime }\left(
0,T;Y^{\ast }\right) ,
\end{equation*}%
for any $y\in G\subseteq L^{p_{1}}\left( 0,T;Y\right) $, where $G\equiv $ $%
\underset{r\geq \tau _{1}}{\cup }G_{r}$:\newline
\begin{equation*}
G_{r}\equiv \left\{ y\in L^{p_{1}}\left( 0,T;Y\right) \left\vert \underset{0}%
{\overset{T}{\int }}\left\vert \langle y\left( t\right) ,Lx\left( t\right)
\rangle \right\vert ~dt\leq \underset{0}{\overset{T}{\int }}\langle
f(t,x\left( t\right) ),Lx\left( t\right) \rangle dt\right. -c,\right.
\end{equation*}%
\begin{equation*}
\left. \forall \text{ }x\in L^{p_{0}}\left( 0,T;X_{0}\right) ,\ \left[ x%
\right] _{L^{p_{0}}\left( 0,T;\mathcal{M}_{0}\right) }=r\right\} ,\
C_{2}<c<\infty .
\end{equation*}
\end{theorem}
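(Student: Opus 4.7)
The plan is to prove the existence result by a Faedo--Galerkin scheme adapted to the abstract setting, using the operator $L$ from condition (ii) as a test functional for the a priori estimates, and the weak compactness of $f$ from condition (i) to pass to the limit.

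First, using the separability of $X_0$ and the density $\overline{X_0}^{\mathcal{M}_0}=\mathcal{M}_0$, $\overline{X_0}^{Y}=Y$, I would pick a total linearly independent system $\{w_i\}_{i=1}^{\infty}\subset X_0$ and look for approximate solutions of the form $x_m(t)=\sum_{i=1}^{m} c_{mi}(t)w_i$ satisfying the finite-dimensional Cauchy problem obtained by testing $\frac{dx}{dt}+f(t,x)=y$ against $w_j$, $j=1,\dots,m$, with $x_m(0)=0$. Local solvability of this nonlinear ODE system follows from the weak compactness (hence continuity on finite-dimensional subspaces) of $f$ in condition (i). For the data $y$ I would fix $y\in G_r$ for some $r\ge \tau_1$, and view $y$ as a fixed element of $L^{p_1}(0,T;Y)$.

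Second, I would derive a priori estimates by pairing the equation with $Lx_m(t)$ and integrating in time. Condition (iii) produces on the left a term bounded below by $\varphi([x_m]_{L^{p_0}(\mathcal{M}_0)})[x_m]_{L^{p_0}(\mathcal{M}_0)}$, while condition (iv) controls the derivative term $\int_0^t\langle dx_m/d\tau, Lx_m\rangle$ from below by $C_1\|L_0x_m\|_Y^\nu$ up to a constant. Combining these with the definition of $G_r$ -- which precisely bounds $\int_0^T|\langle y,Lx_m\rangle|\,dt$ by $\int_0^T\langle f(t,x_m),Lx_m\rangle\,dt - c$ for the relevant norm level -- gives a uniform bound for $[x_m]_{L^{p_0}(0,T;\mathcal{M}_0)}$ and hence, through condition (i) and reflexivity of $Y$, for $\|x_m'\|_{L^{p_1}(0,T;Y)}$. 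Thus $\{x_m\}$ lies in a bounded subset of $\mathbf{P}_{1,p_0,p_1}(0,T;\mathcal{M}_0,Y)$, and the global extension of the approximate solutions to $[0,T]$ follows from these bounds.

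Third, by reflexivity I would extract a subsequence (not relabeled) such that $x_m\rightharpoonup x$ in $L^{p_0}(0,T;\mathcal{M}_0)$, $x_m'\rightharpoonup x'$ in $L^{p_1}(0,T;Y)$, and by condition (i) also $f(\cdot,x_m)\rightharpoonup \chi$ in $L^{p_1}(0,T;Y)$ for some $\chi$. Here the main obstacle is identifying $\chi$ with $f(\cdot,x)$: this is exactly where the weak compactness hypothesis of condition (i) is indispensable, and it is used in conjunction with the commutation of $L$ with $d/dt$ and the injectivity of $L^*$ from condition (ii) to promote equalities tested against $Lx_0$, $x_0\in X_0$, to equalities tested against all $y^*\in L^{p_1{}'}(0,T;Y^*)$, via the density of $X_0$ in $Y$. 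Passing to the limit in the Galerkin identity gives
\begin{equation*}
\int_0^T\left\langle \frac{dx}{dt}+\chi(t),y^*(t)\right\rangle dt = \int_0^T\langle y(t),y^*(t)\rangle\,dt
\end{equation*}
for every $y^*\in L^{p_1{}'}(0,T;Y^*)$, and the identification $\chi=f(\cdot,x)$ finishes the weak formulation.

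Finally, I would verify the initial condition $x(0)=0$ by multiplying the Galerkin equation by a test $\phi\in C^1([0,T])$ with $\phi(T)=0$, $\phi(0)\ne 0$, integrating by parts in $t$, passing to the limit (using $x_m(0)=0$), and comparing with the same manipulation performed on the limit equation; the difference yields $\phi(0)\langle x(0),v\rangle=0$ for all $v\in X_0$, hence $x(0)=0$ by density. The subtle point of the whole argument is the limit identification step, because the nonlinearity of $f$ in the $x$ variable is handled only through the abstract weak compactness (i), so the full force of that hypothesis is used there.
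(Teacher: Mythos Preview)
The paper does not prove Theorem \ref{Theorem 2.2}; it is quoted from the external references \cite{Sol3}, \cite{SolAhm} as an alternative abstract tool, and the only information the paper offers about its proof is the remark immediately preceding it: ``the general theorems also were proved by using of the Faedo--Galerkin method and $\varepsilon$-regularization.'' So there is no in-paper proof to compare against. Your Faedo--Galerkin scheme matches the first half of that description and, structurally, mirrors what the paper does for the concrete problem (\ref{3.3})--(\ref{3.5}) in Section \ref{Sec_5}; the $\varepsilon$-regularization step, however, is absent from your outline.

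Two points in your sketch deserve attention. First, the Galerkin system should be obtained by testing against $Lw_j\in Y^{\ast}$, not against $w_j$: the equation lives in $Y$, and conditions (iii)--(iv) are stated for the pairing $\langle\,\cdot\,,Lx\rangle$. With that correction, multiplying by $c_{mj}(t)$ and summing indeed yields the energy identity paired with $Lx_m$, and your use of (iii)--(iv) and the definition of $G_r$ to get the uniform bound on $[x_m]_{L^{p_0}(\mathcal{M}_0)}$ is exactly the intended mechanism. Second, you deduce local solvability of the finite-dimensional ODE from ``weak compactness (hence continuity on finite-dimensional subspaces) of $f$''; weak compactness in the sense used here (cf.\ the definition in Proposition \ref{P_3.2}) does not by itself give the continuity needed for a Peano-type argument. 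This is precisely where the cited $\varepsilon$-regularization enters: one replaces $f$ by a smoothed $f_\varepsilon$ for which the Galerkin ODE is solvable, derives the a priori bounds uniformly in $\varepsilon$ (conditions (iii)--(iv) are stable under such regularization), and lets $\varepsilon\to 0$ before $m\to\infty$.

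On the limit identification $\chi=f(\cdot,x)$: you are right that this is the crux, and in the paper's usage ``weakly compact'' (Proposition \ref{P_3.2}) is verified together with the identification of the limit, so condition (i) should be read in that stronger sense. Your remark about $\ker L^{\ast}=\{0\}$ and density of $X_0$ in $Y$ is, however, slightly tangled: density of $X_0$ in $Y$ lets you extend the Galerkin identity from $\mathrm{span}\{Lw_j\}$ to a dense subset of $Y^{\ast}$, while $\ker L^{\ast}=\{0\}$ (equivalently, dense range of $L$) is what makes that subset dense. Neither of these by itself identifies $\chi$; that comes from (i).
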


The next proposition follows immediately from the theorem \ref{Theorem 2.2}.

\begin{corollary}
\label{Corollary 2.1}Under assumptions of Theorem \ref{Theorem 2.2} the
problem (\ref{Eqn 2.4}) is solvable in $\underset{0}{\mathbf{P}}$\/$%
_{1,p_{0},p_{1}}\left( 0,T;\mathcal{M}_{0},Y\right) $ for any $y\in
L^{p_{1}}\left( 0,T;Y\right) $ satisfying the condition: there is $r>0$ such
that the inequality 
\begin{equation*}
\left\Vert y\right\Vert _{L^{p_{1}}\left( 0,T;Y\right) }\leq \varphi \left(
\lbrack x]_{L^{p_{0}}\left( 0,T;\mathcal{M}_{0}\right) }\right)
\end{equation*}%
holds for any $x\in L^{p_{0}}\left( 0,T;X_{0}\right) $ with $%
[x]_{L^{p_{0}}\left( \mathcal{M}_{0}\right) }\geq r$. Furthermore, if $%
\varphi \left( \tau \right) \nearrow \infty $ as $\tau \nearrow \infty $
then the problem (\ref{Eqn 2.4}) is solvable in $\underset{0}{\mathbf{P}}$\/$%
_{1,p_{0},p_{1}}\left( 0,T;\mathcal{M}_{0},Y\right) $ for any $y\in
L^{p_{1}}\left( 0,T;Y\right) $ satisfying the inequality 
\begin{equation*}
\sup \left\{ \frac{1}{[x]_{L^{p_{0}}\left( 0,T;\mathcal{M}_{0}\right) }}%
\underset{0}{\overset{T}{\int }}\langle y\left( t\right) ,Lx\left( t\right)
\rangle ~dt\ \left\vert \ x\in L^{p_{0}}\left( 0,T;X_{0}\right) \right.
\right\} <\infty .
\end{equation*}
\end{corollary}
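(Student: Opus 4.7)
The plan is to verify that, under each of the two hypotheses in the corollary, the element $y$ lies in the set $G = \bigcup_{r \geq \tau_1} G_r$ of Theorem~\ref{Theorem 2.2}, after which solvability of problem (\ref{Eqn 2.4}) in $\underset{0}{\mathbf{P}}\/_{1,p_{0},p_{1}}(0,T;\mathcal{M}_{0},Y)$ is immediate.

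For the first assertion I would fix $r_{0} := \max\{r, \tau_{1}, \tau_{0}\}$ and prove $y \in G_{r_{0}}$. The task reduces to two estimates for arbitrary $x \in L^{p_{0}}(0,T;X_{0})$ with $[x]_{L^{p_{0}}(\mathcal{M}_{0})} = r_{0}$. The upper bound on $\int_{0}^{T} |\langle y(t), Lx(t)\rangle|\,dt$ comes from H\"older's inequality in the duality pairing between $L^{p_{1}}(0,T;Y)$ and $L^{p_{1}'}(0,T;Y^{\ast})$, together with the continuity of $L : W^{s,p_{2}}(0,T;X_{0}) \to W^{s,p_{2}}(0,T;Y^{\ast})$ from hypothesis (ii); this controls $\|Lx\|_{L^{p_{1}'}(Y^{\ast})}$ by $[x]_{L^{p_{0}}(\mathcal{M}_{0})}$ via the embeddings $X_{0} \subseteq \mathcal{M}_{0} \cap Y$. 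The lower bound $\int_{0}^{T} \langle f(t,x), Lx\rangle\,dt \geq \varphi(r_{0})\,r_{0}$ is exactly hypothesis (iii). The hypothesis $\|y\|_{L^{p_{1}}(0,T;Y)} \leq \varphi([x]_{L^{p_{0}}(\mathcal{M}_{0})})$ for $[x] \geq r$ then yields the inequality defining $G_{r_{0}}$, with the constant $c > C_{2}$ of Theorem~\ref{Theorem 2.2} absorbed by enlarging $r_{0}$ if necessary.

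For the second assertion, the supremum hypothesis gives a finite constant $M$ with
\[
\int_{0}^{T} \langle y(t), Lx(t)\rangle\,dt \leq M \, [x]_{L^{p_{0}}(0,T;\mathcal{M}_{0})}
\]
for every admissible $x$. Combined with hypothesis (iii) this produces
\[
\int_{0}^{T} \langle f(t,x), Lx\rangle\,dt - \int_{0}^{T} |\langle y, Lx\rangle|\,dt \geq \bigl(\varphi(r) - M\bigr)\,r
\]
whenever $[x]_{L^{p_{0}}(\mathcal{M}_{0})} = r$. Since $\varphi(\tau) \nearrow \infty$ as $\tau \nearrow \infty$, one may choose $r \geq \tau_{1}$ so large that $(\varphi(r) - M)\,r > c$, placing $y$ in $G_{r}$. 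Theorem~\ref{Theorem 2.2} then closes the argument.

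The main obstacle I anticipate is keeping the chain of estimates $\|Lx\|_{L^{p_{1}'}(0,T;Y^{\ast})} \leq C\,[x]_{L^{p_{0}}(0,T;\mathcal{M}_{0})}$ honest: this requires careful use of the inclusions $X_{0} \subseteq \mathcal{M}_{0} \cap Y$ and $\overline{X_{0}}^{\mathcal{M}_{0}} = \mathcal{M}_{0}$, of the boundedness and the commutation of $L$ with $d/dt$, and of the compatibility between the exponents $p_{0}, p_{1}, p_{1}', p_{2}$ imposed in hypotheses (i) and (ii). Once that bookkeeping is settled, both parts of the corollary reduce to rearranging the hypotheses into the defining inequality of $G_{r}$.
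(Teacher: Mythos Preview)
The paper does not supply a proof of this corollary at all: it is simply introduced with the sentence ``The next proposition follows immediately from the theorem \ref{Theorem 2.2}.'' Your plan --- check that each hypothesis forces $y$ into some $G_{r}$ with $r\ge\tau_{1}$ and then invoke Theorem~\ref{Theorem 2.2} --- is exactly the intended route, and for the second assertion your argument (choose $r$ large enough that $\varphi(r)-M$ dominates the constant $c>C_{2}$) is clean and correct.

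One comment on the first assertion. The obstacle you yourself flag, namely the estimate $\|Lx\|_{L^{p_{1}'}(0,T;Y^{\ast})}\le C\,[x]_{L^{p_{0}}(0,T;\mathcal{M}_{0})}$, is genuine: hypothesis~(ii) only gives continuity of $L$ on $W^{s,p_{2}}(0,T;X_{0})$, and the passage from there to a bound in terms of the $\mathcal{M}_{0}$-seminorm is not supplied by the standing assumptions in any transparent way. The paper does not address this point either, so you are not missing an argument present in the text; but be aware that ``absorbing $c$ by enlarging $r_{0}$'' will not work in the first part, since there $\varphi$ is merely nondecreasing (not coercive), and a mismatch of constants in $\|y\|\,\|Lx\|\le C\varphi(r_{0})r_{0}$ versus the required $\varphi(r_{0})r_{0}-c$ cannot be repaired by taking $r_{0}$ large. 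If the framework of \cite{Sol3,SolAhm} (where this corollary originates) normalizes $L$ so that the pairing $\int_{0}^{T}\langle y,Lx\rangle\,dt$ is directly controlled by $\|y\|_{L^{p_{1}}(Y)}\,[x]_{L^{p_{0}}(\mathcal{M}_{0})}$, the difficulty evaporates; otherwise the first clause needs a slightly sharper hypothesis on $L$ than is stated here.
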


\section{\label{Sec_6}Uniqueness of Solution of Problem (3.3) - (3.5)}

For the study of the uniqueness of the solution as usually: we will assume
that the posed problem have two different solutions $u=\left(
u_{1},u_{2},u_{3}\right) $, $v=\left( v_{1},v_{2},v_{3}\right) $ and we will
investigate its difference: $w=u-v$.\ (Here for brevity we won't specify
indexes for functions, which shows that we investigate problem (\ref{3.3}) -
(\ref{3.5}) on $Q_{L}^{T}$.) Then for $w$ we obtain the following problem 
\begin{equation*}
\frac{\partial w}{\partial t}-\nu \left[ \left( 1+a_{1}^{-2}\right)
D_{1}^{2}+\left( 1+a_{2}^{-2}\right) D_{2}^{2}\right] w-2\nu
a_{1}^{-1}a_{2}^{-1}D_{1}D_{2}w+
\end{equation*}%
\begin{equation*}
\left( u_{1}-a_{1}^{-1}u_{3}\right) D_{1}u-\left(
v_{1}-a_{1}^{-1}v_{3}\right) D_{1}v+\left( u_{2}-a_{2}^{-1}u_{3}\right)
D_{2}u-
\end{equation*}%
\begin{equation*}
\left( v_{2}-a_{2}^{-1}v_{3}\right) D_{2}v=0,
\end{equation*}%
\begin{equation*}
\func{div}w=D_{1}\left[ \left( u-a_{1}^{-1}u_{3}\right) -\left(
v-a_{1}^{-1}v_{3}\right) \right] +D_{2}\left[ \left(
u-a_{2}^{-1}u_{3}\right) \right. -
\end{equation*}%
\begin{equation}
\left. \left( v-a_{2}^{-1}v_{3}\right) \right] =D_{1}w+D_{2}w-\left( a_{1}^{-1}D_{1}+a_{2}^{-1}D_{2}\right) w_{3}=0,
\label{3.18}
\end{equation}%
\begin{equation}
w\left( 0,x\right) =0,\quad x\in \Omega \cap L;\quad w\left\vert \ _{\left( 0,T\right) \times \partial \Omega _{L}}\right. =0.
\label{3.19}
\end{equation}

Hence, we derive 
\begin{equation*}
\frac{1}{2}\frac{d}{dt}\left\Vert w\right\Vert _{2}^{2}+\nu \left[ \left(
1+a_{1}^{-2}\right) \left\Vert D_{1}w\right\Vert _{2}^{2}+\left(
1+a_{2}^{-2}\right) \left\Vert D_{2}w\right\Vert _{2}^{2}\right] +
\end{equation*}%
\begin{equation*}
2\nu a_{1}^{-1}a_{2}^{-1}\left\langle D_{1}w,D_{2}w\right\rangle _{\Omega
_{L}}+\left\langle \left( u_{1}-a_{1}^{-1}u_{3}\right) D_{1}u-\left(
v_{1}-a_{1}^{-1}v_{3}\right) D_{1}v,w\right\rangle _{\Omega _{L}}+
\end{equation*}%
\begin{equation*}
\left\langle \left( u_{2}-a_{2}^{-1}u_{3}\right) D_{2}u-\left(
v_{2}-a_{2}^{-1}v_{3}\right) D_{2}v,w\right\rangle _{\Omega _{L}}=0
\end{equation*}%
or 
\begin{equation*}
\frac{1}{2}\frac{d}{dt}\left\Vert w\right\Vert _{2}^{2}+\nu \left(
\left\Vert D_{1}w\right\Vert _{2}^{2}+\left\Vert D_{2}w\right\Vert
_{2}^{2}\right) +\nu \left[ a_{1}^{-2}\left\Vert D_{1}w\right\Vert
_{2}^{2}+a_{2}^{-2}\left\Vert D_{2}w\right\Vert _{2}^{2}+\right.
\end{equation*}%
\begin{equation*}
\left. 2a_{1}^{-1}a_{2}^{-1}\left\langle D_{1}w,D_{2}w\right\rangle _{\Omega
_{L}}\right] +\left\langle u_{1}D_{1}u-v_{1}D_{1}v,w\right\rangle _{\Omega
_{L}}+\left\langle u_{2}D_{2}u-v_{2}D_{2}v,w\right\rangle _{\Omega _{L}}-
\end{equation*}%
\begin{equation}
a_{1}^{-1}\left\langle u_{3}D_{1}u-v_{3}D_{1}v,w\right\rangle _{\Omega
_{L}}-a_{2}^{-1}\left\langle u_{3}D_{2}u-v_{3}D_{2}v,w\right\rangle _{\Omega
_{L}}=0.  \label{3.20}
\end{equation}

If we consider the last 4 added elements of left part (\ref{3.20}),
separately, and if we simplify these by calculations then we get 
\begin{equation*}
\left\langle w_{1}D_{1}u,w\right\rangle _{\Omega _{L}}+\left\langle
v_{1}D_{1}w,w\right\rangle _{\Omega _{L}}+\left\langle
w_{2}D_{2}u,w\right\rangle _{\Omega _{L}}+\left\langle
v_{2}D_{2}w,w\right\rangle _{\Omega _{L}}-
\end{equation*}%
\begin{equation*}
a_{1}^{-1}\left\langle w_{3}D_{1}u,w\right\rangle _{\Omega
_{L}}-a_{1}^{-1}\left\langle v_{3}D_{1}w,w\right\rangle _{\Omega
_{L}}-a_{2}^{-1}\left\langle w_{3}D_{2}u,w\right\rangle _{\Omega
_{L}}-a_{2}^{-1}\left\langle v_{3}D_{2}w,w\right\rangle _{\Omega _{L}}=
\end{equation*}%
\begin{equation*}
\left\langle w_{1}D_{1}u,w\right\rangle _{\Omega _{L}}+\frac{1}{2}%
\left\langle v_{1},D_{1}w^{2}\right\rangle _{\Omega _{L}}+\left\langle
w_{2}D_{2}u,w\right\rangle _{\Omega _{L}}+\frac{1}{2}\left\langle
v_{2},D_{2}w^{2}\right\rangle _{\Omega _{L}}-
\end{equation*}%
\begin{equation*}
a_{1}^{-1}\left\langle w_{3}D_{1}u,w\right\rangle _{\Omega _{L}}-\frac{1}{2}%
a_{1}^{-1}\left\langle v_{3},D_{1}w^{2}\right\rangle _{\Omega
_{L}}-a_{2}^{-1}\left\langle w_{3}D_{2}u,w\right\rangle _{\Omega _{L}}-
\end{equation*}%
\begin{equation*}
\frac{1}{2}a_{2}^{-1}\left\langle v_{3},D_{2}w^{2}\right\rangle =\frac{1}{2}%
\left\langle v_{1}-a_{1}^{-1}v_{3},D_{1}w^{2}\right\rangle _{\Omega _{L}}+%
\frac{1}{2}\left\langle v_{2}-a_{2}^{-1}v_{3},D_{2}w^{2}\right\rangle
_{\Omega _{L}}+
\end{equation*}%
\begin{equation*}
\left\langle \left( w_{1}-a_{1}^{-1}w_{3}\right) w,D_{1}u\right\rangle
_{\Omega _{L}}+\left\langle \left( w_{2}-a_{2}^{-1}w_{3}\right)
w,D_{2}u\right\rangle _{\Omega _{L}}=
\end{equation*}%
\begin{equation*}
\left\langle \left( w_{1}-a_{1}^{-1}w_{3}\right) w,D_{1}u\right\rangle
_{\Omega _{L}}+\left\langle \left( w_{2}-a_{2}^{-1}w_{3}\right)
w,D_{2}u\right\rangle _{\Omega _{L}}.
\end{equation*}%
In the last equality we use the equation $\func{div}v=0$ (see, (\ref{3.4}))
and the condition (\ref{3.19}).

If we take into account this equality in equation (\ref{3.20}) then we get
the equation 
\begin{equation*}
\frac{1}{2}\frac{d}{dt}\left\Vert w\right\Vert _{2}^{2}+\nu \left(
\left\Vert D_{1}w\right\Vert _{2}^{2}+\left\Vert D_{2}w\right\Vert
_{2}^{2}\right) +\nu \left[ a_{1}^{-2}\left\Vert D_{1}w\right\Vert
_{2}^{2}+\right.
\end{equation*}%
\begin{equation*}
\left. a_{2}^{-2}\left\Vert D_{2}w\right\Vert
_{2}^{2}+2a_{1}^{-1}a_{2}^{-1}\left\langle D_{1}w,D_{2}w\right\rangle
_{\Omega _{L}}\right] +\left\langle \left( w_{1}-a_{1}^{-1}w_{3}\right)
w,D_{1}u\right\rangle _{\Omega _{L}}+
\end{equation*}%
\begin{equation}
\left\langle \left( w_{2}-a_{2}^{-1}w_{3}\right) w,D_{2}u\right\rangle
_{\Omega _{L}}=0,\quad \left( t,x\right) \in \left( 0,T\right) \times \Omega
_{L}  \label{3.21}
\end{equation}

Consequently, we derive the Cauchy problem for the equation (\ref{3.21})
with the initial condition 
\begin{equation}
\left\Vert w\right\Vert _{2}\left( 0\right) =0.  \label{3.22}
\end{equation}

Hence giving rise to the differential inequality we get the following Cauchy
problem for the differential inequality 
\begin{equation*}
\frac{1}{2}\frac{d}{dt}\left\Vert w\right\Vert _{2}^{2}+\nu \left(
\left\Vert D_{1}w\right\Vert _{2}^{2}+\left\Vert D_{2}w\right\Vert
_{2}^{2}\right) \leq
\end{equation*}%
\begin{equation}
\left\vert \left\langle \left( w_{1}-a_{1}^{-1}w_{3}\right)
w,D_{1}u\right\rangle _{\Omega _{L}}\right\vert +\left\vert \left\langle
\left( w_{2}-a_{2}^{-1}w_{3}\right) w,D_{2}u\right\rangle _{\Omega
_{L}}\right\vert ,  \label{3.21'}
\end{equation}

with the initial condition (\ref{3.22}).

We have the following estimate for the right side of (\ref{3.21'})%
\begin{equation*}
\left\vert \left\langle \left( w_{1}-a_{1}^{-1}w_{3}\right)
w,D_{1}u\right\rangle _{\Omega _{L}}\right\vert +\left\vert \left\langle
\left( w_{2}-a_{2}^{-1}w_{3}\right) w,D_{2}u\right\rangle _{\Omega
_{L}}\right\vert \leq
\end{equation*}%
\begin{equation*}
\left( \left\Vert w_{1}-a_{1}^{-1}w_{3}\right\Vert _{4}+\left\Vert
w_{2}-a_{2}^{-1}w_{3}\right\Vert _{4}\right) \left\Vert w\right\Vert
_{4}\left\Vert \nabla u\right\Vert _{2}\leq
\end{equation*}%
whence with the use of Gagliardo-Nirenberg-Sobolev inequality (\cite%
{BesIlNik}) we have 
\begin{equation*}
\left( 1+\max \left\{ \left\vert a_{1}^{-1}\right\vert ,\left\vert
a_{2}^{-1}\right\vert \right\} \right) \left\Vert w\right\Vert
_{4}^{2}\left\Vert \nabla u\right\Vert _{2}\leq c\left\Vert w\right\Vert
_{2}\left\Vert \nabla w\right\Vert _{2}\left\Vert \nabla u\right\Vert _{2}.
\end{equation*}%
It need to note that 
\begin{equation*}
\left( w_{1}-a_{1}^{-1}w_{3}\right) w,\ \left( w_{2}-a_{2}^{-1}w_{3}\right)
w\in L^{2}\left( 0,T;V^{\ast }\left( \Omega _{L}\right) \right) ,
\end{equation*}%
by virtue of (\ref{3.16}).

Now taking this into account in (\ref{3.21'}) one can arrive the following
Cauchy problem for differential inequality 
\begin{equation*}
\frac{1}{2}\frac{d}{dt}\left\Vert w\right\Vert _{2}^{2}\left( t\right) +\nu
\left\Vert \nabla w\right\Vert _{2}^{2}\left( t\right) \leq c\left\Vert
w\right\Vert _{2}\left( t\right) \left\Vert \nabla w\right\Vert _{2}\left(
t\right) \left\Vert \nabla u\right\Vert _{2}\left( t\right) \leq
\end{equation*}%
\begin{equation*}
C\left( c,\nu \right) \left\Vert \nabla u\right\Vert _{2}^{2}\left( t\right)
\left\Vert w\right\Vert _{2}^{2}\left( t\right) +\nu \left\Vert \nabla
w\right\Vert _{2}^{2}\left( t\right) ,\quad \left\Vert w\right\Vert
_{2}\left( 0\right) =0
\end{equation*}%
since $w\in L^{\infty }\left( 0,T;\left( H\left( \Omega _{L}\right) \right)
^{3}\right) $, and consequently, $\left\Vert w\right\Vert _{2}\left\Vert
\nabla w\right\Vert _{2}\in L^{2}\left( 0,T\right) $ by the virtue of the
above existence theorem $w\in \mathcal{V}\left( Q_{L}^{T}\right) $, here $%
C\left( c,\nu \right) >0$ is constant.

Thus we obtain the problem 
\begin{equation*}
\frac{d}{dt}\left\Vert w\right\Vert _{2}^{2}\left( t\right) \leq 2C\left(
c,\nu \right) \left\Vert \nabla u\right\Vert _{2}^{2}\left( t\right)
\left\Vert w\right\Vert _{2}^{2}\left( t\right) ,\quad \left\Vert
w\right\Vert _{2}\left( 0\right) =0,
\end{equation*}%
if we denote $\left\Vert w\right\Vert _{2}^{2}\left( t\right) \equiv y\left(
t\right) $ then 
\begin{equation*}
\frac{d}{dt}y\left( t\right) \leq 2C\left( c,\nu \right) \left\Vert \nabla
u\right\Vert _{2}^{2}\left( t\right) y\left( t\right) ,\quad y\left(
0\right) =0.
\end{equation*}

Consequently, we obtain $\left\Vert w\right\Vert _{2}^{2}\left( t\right)
\equiv y\left( t\right) =0$, i.e. the following result is proved: \ 

\begin{theorem}
\label{Th_2.2}Under above mentioned conditions for any 
\begin{equation*}
\left( f,u_{0}\right) \in L^{2}\left( 0,T;V^{\ast }\left( \Omega _{L}\right)
\right) \times \left( H\left( \Omega _{L}\right) \right) ^{3}
\end{equation*}%
problem (\ref{3.3}) - (\ref{3.5}) has a unique weak solution $u\left(
t,x\right) $ that is contained in $\mathcal{V}\left( Q_{L}^{T}\right) $.
\end{theorem}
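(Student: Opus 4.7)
The statement combines both existence and uniqueness, so the plan has two parts that are handled by quite different arguments.

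\textbf{Existence.} The existence part is already established by Theorem \ref{Th_2.1}, which produces a weak solution $u_L \in \mathcal{V}(Q_L^T)$ for any data $(f_L,u_{0L}) \in L^2(0,T;V^\ast(\Omega_L)) \times (H(\Omega_L))^3$. So for the existence half of Theorem \ref{Th_2.2} I would simply invoke Theorem \ref{Th_2.1}; the a priori estimates \eqref{3.11}--\eqref{3.12}, the boundedness of the trilinear form (Proposition \ref{P_3.1}), the weak compactness of $B$ (Proposition \ref{P_3.2}), the bound on $u^\prime$ in $L^2(0,T;V^\ast(\Omega_L))$ and the realization of the initial datum (Subsection \ref{SS_5.5}) together cover this case.

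\textbf{Uniqueness.} Suppose $u,v \in \mathcal{V}(Q_L^T)$ are two weak solutions with the same data and set $w=u-v$. I would subtract the equations satisfied by $u$ and $v$ to obtain the system \eqref{3.18}--\eqref{3.19} for $w$, then test with $w$ to arrive at \eqref{3.20}. The key algebraic step is to rewrite the four trilinear contributions using the splitting $u_iD_iu - v_iD_iv = w_iD_iu + v_iD_iw$ and then to combine the $v$-terms: since $v$ satisfies the divergence constraint \eqref{3.4}, an integration by parts eliminates the factors of the form $\langle (v_j-a_j^{-1}v_3), D_j w^2\rangle$, leaving only the cross terms $\langle(w_1-a_1^{-1}w_3)w,D_1 u\rangle_{\Omega_L} + \langle(w_2-a_2^{-1}w_3)w,D_2 u\rangle_{\Omega_L}$, which land us in \eqref{3.21}.

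At this point I would pass to the differential inequality \eqref{3.21'} and estimate its right-hand side by H\"older, $\|w\|_4^2 \le c\|w\|_2\|\nabla w\|_2$ (this is the two-dimensional Gagliardo--Nirenberg inequality, which is available because $\Omega_L \subset R^2$), and $\|\nabla u\|_2$. A Young-type splitting absorbs $\nu\|\nabla w\|_2^2$ into the left-hand side and yields
\begin{equation*}
\tfrac{d}{dt}\|w\|_2^2(t) \le 2C(c,\nu)\|\nabla u\|_2^2(t)\,\|w\|_2^2(t),\qquad \|w\|_2(0)=0.
\end{equation*}
Since $u\in L^2(0,T;V(\Omega_L))$, the coefficient $\|\nabla u\|_2^2$ is integrable on $(0,T)$, so Gronwall's lemma forces $\|w\|_2(t)\equiv 0$, hence $u=v$.

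\textbf{Main obstacle.} The only delicate step is the uniqueness half: the crucial simplification of the nonlinear terms using $\operatorname{div} v = 0$ in the modified sense \eqref{3.4}, which is what eliminates the $\langle v\cdot\nabla w,w\rangle$ type pieces and leaves the manageable $\langle w\otimes w,\nabla u\rangle$ residue. Everything else in the uniqueness argument (Gagliardo--Nirenberg in 2D plus Gronwall) is standard and works precisely because the restricted problem lives on the two-dimensional domain $\Omega_L$; the corresponding three-dimensional estimate would not close. The existence half is already available from the preceding section.
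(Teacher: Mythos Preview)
Your proposal is correct and follows essentially the same route as the paper: existence is taken from Theorem~\ref{Th_2.1}, and uniqueness proceeds by subtracting, testing with $w$, using the splitting $u_jD_ju-v_jD_jv=w_jD_ju+v_jD_jw$ together with $\operatorname{div}v=0$ in the sense of \eqref{3.4} to kill the $v$-terms, then applying the two-dimensional Gagliardo--Nirenberg inequality, Young's inequality, and Gronwall. The emphasis you place on the cancellation of the $\langle v\cdot\nabla w,w\rangle$ pieces as the crucial step matches the paper's argument exactly.
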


\section{\label{Sec_7}Proof of Theorem \protect\ref{Th_1}}

\begin{proof}
(of Theorem \ref{Th_1}). As is known (\cite{Ler1}, \cite{Hop}, \cite{Lad2}, 
\cite{Lio1}), problem (1.1%
${{}^1}$%
) - (\ref{3}) is solvable and possesses weak solution that is contained in
the space $\mathcal{V}\left( Q^{T}\right) $. So, assume problem (1.1%
${{}^1}$%
) - (\ref{3}) has, at least, two different solutions under conditions of
Theorem \ref{Th_1}.

It is clear that if the problem have more than one solution then there is,
at least, some subdomain of $Q^{T}\equiv \left( 0,T\right) \times \Omega $,
on which this problem has, at least, two solutions such, that each from the
other are different. Consequently, starting from the above Lemma \ref{L_2.2}
we need to investigate the existence and uniqueness of the posed problem on
arbitrary fixed subdomain on which it is possibl that our problem can
possess more than one solution, more exactly in the case when the subdomain
is generated by an arbitrary fixed hyperplane by the virtue of Lemma \ref%
{L_2.2}. It is clear that, for us it is enough to prove that no such
subdomain generated by a hyperplane on which more than single solutions of
problem (1.1%
${{}^1}$%
) - (\ref{3}) exists, again by virtue of Lemma \ref{L_2.2}. In other words,
for us it remains to use the above results (i.e. Theorems \ref{Th_2.1} and %
\ref{Th_2.2}) in order to end the proof.

From the proved theorems above we obtain that there does not exist a
subdomain, defined in the previous section, on which problem (1.1%
${{}^1}$%
) - (\ref{3}) reduced on this subdomain might possesses more than one weak
solution. Consequently, taking Lemma \ref{L_2.2} into account we obtain that
the problem (1.1%
${{}^1}$%
) - (\ref{3}) (i.e.) under conditions of Theorem \ref{Th_1} possesses only
one weak solution.
\end{proof}

So, under conditions of Theorem \ref{Th_1} the uniqueness of the weak
solution $u(x,t)$ (of velocity vector) of the problem obtained from the
mixed problem for the incompressible Navier-Stokes equation by using
approach of the Hopf-Leray in three dimension case is proved as noted in
Notation 1.

Hence one can make the following conclusion

\section{Conclusion}

Let us 
\begin{equation*}
f\in L^{2}\left( 0,T;V^{\ast }\left( \Omega \right) \right) ,\ u_{0}\in
\left( H\left( \Omega \right) \right) ^{3}.
\end{equation*}%
It is well-known that the following inclusions are dense 
\begin{equation*}
L^{2}\left( 0,T;H^{1/2}\left( \Omega \right) \right) \subset L^{2}\left(
Q^{T}\right) ;\ \mathcal{H}^{1/2}\left( \Omega \right) \subset \left(
H\left( \Omega \right) \right) ^{3}\ \ \&
\end{equation*}%
\begin{equation*}
L^{2}\left( 0,T;\mathcal{H}^{1/2}\left( \Omega \right) \right) \subset
L^{2}\left( 0,T;\left( H^{-1}\left( \Omega \right) \right) ^{3}\right)
\end{equation*}%
consequently, there exist sequences 
\begin{equation*}
\left\{ u_{0m}\right\} _{m=1}^{\infty }\subset \mathcal{H}^{1/2}\left(
\Omega \right) ;\left\{ f_{m}\right\} _{m=1}^{\infty }\subset L^{2}\left(
0,T;\mathcal{H}^{1/2}\left( \Omega \right) \right)
\end{equation*}%
such that $u_{0m}\longrightarrow u_{0}$ in $\left( H\left( \Omega \right)
\right) ^{3}$ and $\left\Vert u_{0m}\right\Vert _{\left( H\left( \Omega
\right) \right) ^{3}}\leq \left\Vert u_{0}\right\Vert _{\left( H\left(
\Omega \right) \right) ^{3}}$, $f_{m}\longrightarrow f$ in $L^{2}\left(
0,T;\left( H^{-1}\left( \Omega \right) \right) ^{3}\right) $ and $\left\Vert
f_{m}\right\Vert _{L^{2}\left( 0,T;\left( H^{-1}\left( \Omega \right)
\right) ^{3}\right) }\leq \left\Vert f\right\Vert _{L^{2}\left( 0,T;\left(
H^{-1}\left( \Omega \right) \right) ^{3}\right) }$.

Consequently, for any $\varepsilon >0$ there exist $m\left( \varepsilon
\right) \geq 1$ such that under $m\geq m\left( \varepsilon \right) $ for the
corresponding elements $u_{0m}$, $f_{m}$ of the above sequences 
\begin{equation*}
\left\Vert u_{0}-u_{0m}\right\Vert _{\left( H\left( \Omega \right) \right)
^{3}}<\varepsilon ;\ \left\Vert f-f_{m}\right\Vert _{L^{2}\left( 0,T;\left(
H^{-1}\left( \Omega \right) \right) ^{3}\right) }<\varepsilon
\end{equation*}%
hold, and also the claim of Theorem \ref{Th_1} is valid for problem (1.1%
${{}^1}$%
) - (\ref{3}) with these datums.

One can note that the space that is everywhere dense subset of the necessary
space possesses the minimal smoothness with respect to the needed space and
also is sufficient for the application of our approach. So, we establish:

\begin{theorem}
\label{Th_8}Let $\Omega $ be a Lipschitz open bounded set in $R^{3}$. Then
the existing by Theorem 1 weak solution $u\in \mathcal{V}\left( Q^{T}\right) 
$ of the system (1.1%
${{}^1}$%
) - (\ref{3}) is unique, if the given functions $f$ and $u_{0}$\ satisfy of
conditions $f$ $\in L^{2}\left( 0,T;\mathcal{H}^{1/2}\left( \Omega \right)
\right) $ and $u_{0}\in \mathcal{H}^{1/2}\left( \Omega \right) $, where a
solution be understood in the sense of Definition \ref{D_2.2}, as is
well-known, spaces $L^{2}\left( 0,T;\mathcal{H}^{1/2}\left( \Omega \right)
\right) $ and $\mathcal{H}^{1/2}\left( \Omega \right) $ are everywhere dense
in spaces $L^{2}\left( 0,T;V^{\ast }\left( \Omega \right) \right) $ and $%
\left( H\left( \Omega \right) \right) ^{3}$, respectively.
\end{theorem}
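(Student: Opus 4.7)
The plan is to reduce this theorem to Theorem \ref{Th_1} (already proved in Section \ref{Sec_7}) and then record the density statement separately. The hypotheses of Theorem \ref{Th_8}, namely $u_{0}\in \mathcal{H}^{1/2}(\Omega)$ and $f\in L^{2}(0,T;\mathcal{H}^{1/2}(\Omega))$, coincide with the hypotheses of Theorem \ref{Th_1} under the vector-valued identification of $\mathcal{H}^{1/2}(\Omega)$. Hence the existence of a weak solution $u\in \mathcal V(Q^{T})$ follows from Theorem 1 (by $\mathcal{H}^{1/2}(\Omega)\hookrightarrow (H(\Omega))^{3}$ and $L^{2}(0,T;\mathcal{H}^{1/2}(\Omega))\hookrightarrow L^{2}(0,T;V^{\ast}(\Omega))$), and the uniqueness is exactly the conclusion of Theorem \ref{Th_1}.

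Concretely, I would argue by contradiction: suppose problem (1.1$^{1}$) - (\ref{3}) admits two weak solutions $u,v\in\mathcal V(Q^{T})$ corresponding to the same data. By Lemma \ref{L_2.2} the set where $u\neq v$ is foliated by a positive $R^{1}$-measure family of cross-sections $\Omega_{L}=\Omega\cap L$, on each of which $u\neq v$. The assumed $H^{1/2}$-regularity of $u_{0}$ and $f$ is the precise threshold that makes the traces $u_{0L}$, $f_{L}$ on $\Omega_{L}$ well-defined elements of $H(\Omega_{L})$ and $L^{2}(0,T;V^{\ast}(\Omega_{L}))$ respectively, so the restricted problem (\ref{3.3})-(\ref{3.5}) makes sense. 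Theorem \ref{Th_2.1} gives a weak solution of this restricted problem in $\mathcal V(Q_{L}^{T})$, and Theorem \ref{Th_2.2} gives its uniqueness. Hence $u_{L}=v_{L}$ on $Q_{L}^{T}$ for every such $L$, contradicting Lemma \ref{L_2.2}. This proves uniqueness of $u\in \mathcal V(Q^{T})$.

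To close the theorem, I would record the density statement written in the text preceding the theorem: since $\mathcal{H}^{1/2}(\Omega)$ is dense in $(H(\Omega))^{3}$ and $L^{2}(0,T;\mathcal{H}^{1/2}(\Omega))$ is dense in $L^{2}(0,T;V^{\ast}(\Omega))$, one can choose approximating sequences $\{u_{0m}\}\subset \mathcal{H}^{1/2}(\Omega)$, $\{f_{m}\}\subset L^{2}(0,T;\mathcal{H}^{1/2}(\Omega))$ with $u_{0m}\to u_{0}$, $f_{m}\to f$ in the respective larger spaces and with norms bounded by the norms of the limits. The just-established uniqueness applies to each pair $(u_{0m},f_{m})$, so Theorem \ref{Th_1} yields a unique weak solution $u_{m}\in\mathcal V(Q^{T})$ for each $m$, which completes the proof.

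The only step that needs genuine care, and which I expect to be the main obstacle, is the well-definedness of the restriction of $u_{0}$ and $f$ to the two-dimensional slice $\Omega_{L}$: this uses the $H^{1/2}$-trace theory together with the $Lip_{loc}$ structure of $\partial\Omega$ so that $\Omega_{L}$ (and hence $P_{x_{k}}\Omega_{L}$) has a Lipschitz boundary in $R^{2}$, ensuring that the restricted divergence condition (\ref{3.4}) and the restricted spaces $V(\Omega_{L})$, $\mathcal V(Q_{L}^{T})$ are genuinely well posed. Once this is verified, the rest of the argument is a direct invocation of Theorems \ref{Th_1}, \ref{Th_2.1} and \ref{Th_2.2}.
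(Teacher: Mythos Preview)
Your proposal is correct and follows essentially the same route as the paper: Theorem \ref{Th_8} is really a restatement of Theorem \ref{Th_1} together with the density observation recorded in the Conclusion, and the paper's own argument amounts to noting that the $\mathcal{H}^{1/2}$-hypotheses on $u_{0}$ and $f$ place us squarely within Theorem \ref{Th_1}, after which the density of $\mathcal{H}^{1/2}(\Omega)$ in $(H(\Omega))^{3}$ and of $L^{2}(0,T;\mathcal{H}^{1/2}(\Omega))$ in $L^{2}(0,T;V^{\ast}(\Omega))$ is simply recorded. Your middle paragraph, which recapitulates the slice argument via Lemma \ref{L_2.2} and Theorems \ref{Th_2.1}--\ref{Th_2.2}, is not needed here since that is precisely the content of the already-proved Theorem \ref{Th_1}, but it does no harm.
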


\begin{acknowledgement}
The author express the gratitude to russian mathematicians of Moscow for
their giving useful suggestions and comments, which have helped to correct
and sufficiently much to improve this paper.
\end{acknowledgement}

\bigskip

\end{document}